\newtheorem{thm}{Theorem}[section]
\newtheorem{cor}[thm]{Corollary}
\newtheorem{lem}[thm]{Lemma}
\newtheorem{prp}[thm]{Proposition}
\newtheorem{exa}[thm]{Example}
\newtheorem{rem}[thm]{Remark}
\theoremstyle{definition}
\newcommand{\scr}[1]{\mathscr #1}
\definecolor{wco}{rgb}{0.5,0.2,0.3}
\numberwithin{equation}{section} \theoremstyle{remark}
\newcommand{\ua}{\uparrow}
\title{{\bf Improving   Numerical   Error  Bounds Near Sharp Interface Limit for Stochastic Reaction-Diffusion  Equations }\footnote{This research is supported in
 part by the National Key R\& D Program of China (No. 2022YFA1006000, 2020YFA0712900) and NNSFC (11921001),  the Hong Kong Research Grant Council GRF grant 15302823, NSFC grant 12301526, internal funds (P0039016, P0045336) from Hong Kong Polytechnic University and the CAS AMSS-PolyU Joint Laboratory of Applied Mathematics.} }
\author{{\bf  Jianbo Cui and Feng-Yu Wang   }\\
\footnotesize{Department of Applied Mathematics, The Hong Kong Polytechnic University}\\
\footnotesize{jianbo.cui@polyu.edu.hk}\\
  \footnotesize{ Center for Applied Mathematics, Tianjin University, Tianjin 300072, China}\\
\footnotesize{ wangfy@tju.edu.cn}}
\newcommand{\N}{\mathbb N}
\newcommand{\R}{\mathbb R}
\newcommand{\D}{\mathbb D}
\newcommand{\Z}{\mathbb Z}
\newcommand{\C}{\mathcal C}
\newcommand{\OO}{\mathcal O}
\newcommand{\LL}{\mathcal L}
\newcommand{\EE}{\mathcal E}
\newcommand{\F}{\mathcal F}
\newcommand{\ee}{\varepsilon}
\newcommand{\dd}{\partial}
\newcommand{\ddd}{\displaystyle}
\newcommand{\kk}{\underline{k}}
\newcommand{\<}{\langle}
\renewcommand{\>}{\rangle}
\begin{document}
\allowdisplaybreaks
\def\R{\mathbb R}  \def\ff{\frac} \def\ss{\sqrt} \def\B{\mathbf
B}
\def\N{\mathbb N} \def\kk{\kappa} \def\m{{\bf m}}
\def\ee{\varepsilon}\def\ddd{D^*}
\def\dd{\delta} \def\DD{\Delta} \def\vv{\varepsilon} \def\rr{\rho}
\def\<{\langle} \def\>{\rangle}
  \def\nn{\nabla} \def\pp{\partial} \def\E{\mathbb E}
\def\d{\text{\rm{d}}} \def\bb{\beta} \def\aa{\alpha} \def\D{\scr D}
  \def\si{\sigma} \def\ess{\text{\rm{ess}}}\def\s{{\bf s}}
\def\beg{\begin} \def\beq{\begin{equation}}  \def\F{\scr F}
\def\Ric{\mathcal Ric} \def\Hess{\text{\rm{Hess}}}
\def\e{\text{\rm{e}}} \def\ua{\underline a} \def\OO{\Omega}  \def\oo{\omega}
 \def\tt{\tilde}\def\[{\lfloor} \def\]{\rfloor}
\def\cut{\text{\rm{cut}}} \def\P{\mathbb P} \def\ifn{I_n(f^{\bigotimes n})}
\def\C{\scr C}      \def\aaa{\mathbf{r}}     \def\r{r}
\def\gap{\text{\rm{gap}}} \def\prr{\pi_{{\bf m},\nu}}  \def\r{\mathbf r}
\def\Z{\mathbb Z} \def\vrr{\nu} \def\ll{\lambda}
\def\L{\scr L}\def\Tt{\tt} \def\TT{\tt}\def\II{\mathbb I}
\def\i{{\rm in}}\def\Sect{{\rm Sect}}  \def\H{\mathbb H}
\def\M{\mathbb M}\def\Q{\mathbb Q} \def\texto{\text{o}} \def\LL{\Lambda}
\def\Rank{{\rm Rank}} \def\B{\scr B} \def\i{{\rm i}} \def\HR{\hat{\R}^d}
\def\to{\rightarrow} \def\gg{\gamma}
\def\EE{\scr E} \def\W{\mathbb W}
\def\A{\scr A} \def\Lip{{\rm Lip}}\def\S{\mathbb S}
\def\BB{\scr B}\def\Ent{{\rm Ent}} \def\i{{\rm i}}\def\itparallel{{\it\parallel}}
\def\g{{\mathbf g}}\def\Sect{{\mathcal Sec}}\def\T{\mathcal T}\def\BB{{\bf B}}
\def\f\ell \def\g{\mathbf g}\def\BL{{\bf L}}  \def\BG{{\mathbb G}}
\def\Bd{{D^E}} \def\BdP{D^E_\phi} \def\Bdd{{\bf \dd}} \def\Bs{{\bf s}} \def\GA{\scr A}
\def\Bg{{\bf g}}  \def\Bdd{\psi_B} \def\supp{{\rm supp}}\def\div{{\rm div}}
\def\ddiv{{\rm div}}\def\osc{{\bf osc}}\def\1{{\bf 1}}\def\BD{\mathbb D}
\def\H{{\bf H}}\def\gg{\gamma} \def\n{{\mathbf n}} \def\red{\color{red}}
\maketitle

\begin{abstract}
In the study of geometric surface evolutions, stochastic reaction-diffusion  equation provides a powerful tool for capturing and simulating complex dynamics. A critical challenge in this area is developing numerical approximations that exhibit error bounds with polynomial dependence on $\vv^{-1}$, where the small parameter $\vv>0$ represents  the diffuse interface thickness. The existence of such bounds for fully discrete approximations of stochastic reaction-diffusion  equations remains unclear in the literature. In this work, we address this challenge by leveraging  the asymptotic log-Harnack inequality to overcome the exponential growth of $\vv^{-1}$. Furthermore, we establish the numerical weak error bounds under the truncated Wasserstein distance for the spectral Galerkin method and a  fully discrete  tamed Euler scheme, with explicit polynomial dependence on $\vv^{-1}$.
  \end{abstract} \noindent
 AMS subject Classification:\  60H15, 60H35, 60D05, 65M15.   \\
\noindent
 Keywords:    Stochastic reaction-diffusion equation,  weak convergence, asymptotic log-Harnack inequality, near sharp interface limit.

 \vskip 2cm

 \section{Introduction}

In this paper we investigate numerical approximations for stochastic reaction-diffusion  equations near the sharp  interface limit, i.e. for the following stochastic evolution equation in   $\H:= L^2(\mu)$ over a separable probability space  $(E,\scr B,\mu)$ with small parameter $\vv>0$:
\beq\label{E1} \d u_t +\Big(A u_t+\ff 1 \vv f(u_t)\Big)\d t  =\d W_t,\end{equation}
where $(A,\D(A))$ is a positive definite self-adjoint operator on $\H$, $f$ is a measurable real function on $\R$, $W_t$ is a Wiener process  on $\H$, and  $\vv $ stands for  the thickness of the thin diffuse interface layer.

The equation \eqref{E1} with small $\vv>0$  is a crucial model to   describe the material interfaces during phase transitions   \cite{MR1175626}, and  has attracted growing interest   due to its ability to incorporate uncertainties and randomness arising  from thermal fluctuations, material impurities, and intrinsic dynamic instabilities  \cite{KORV07,Web10}. The sharp interface limit  as $\vv\to 0$   has been used to describe   the stochastic mean curvature flow   \cite{MR3614757,MR1764701,Yip98}, and to  simulate rare events  \cite{MR3415402,BG18B}.

The  noise term in phase field models is efficient to enhance the stability in simulations, hence has great  interests in the study of  scientific computing \cite{KKL07}.
However, as noted by \cite{Weber10}, conducting numerical analysis near the sharp interface limit remains a significant  challenge due to the complex dynamics that arise at a very small value of $\vv$. A critical issue is deriving a convergent  error bound of numerical approximation with explicit algebraic order of $\vv^{-1}$ for small $\vv\in (0,1).$
Due to the loss of the spectral estimate for the Galerkin approximation equation  (see \eqref{sac} below) in stochastic case, existing  arguments often lead to  error bounds of numerical approximation depending on $\vv^{-1}$ exponentially,
see e.g. \cite{LPS14,BJ16,MR4001780,BCH18,KLL18,FLZ17,CHS21,LQ20,MP17}.
Recently, some progress has been made under strong assumptions:
   under  a smallness condition on the noise \cite{ABNP21}  utilizes  deterministic spectral estimates in \cite{C94} and perturbation techniques to derive an   error bound with algebraic order of $\vv^{-1}$ for an implicit scheme of a stochastic Cahn-Hilliard equation; while   under a non-degenerate condition on the noise  \cite{CS22} derived
an error bound of a  temporal splitting scheme for \eqref{E1} depending on $\vv^{-1}$ at most polynomially. To our best  knowledge, there is no any result on improving the order of $\vv^{-1}$  in the numerical error for  \eqref{E1} without imposing additional smallness  or non-degenerate conditions,  see Remark 2.6 for  detailed explanations.

To derive sharp convergence rate of numerical approximations with explicit algebraic dependence on $\vv^{-1},$   we  introduce a novel approach to estimate the gradient estimate  on the associated Markov semigroup.
A key tool in our method is the asymptotic log-Harnack inequality established in Lemma \ref{L3} below  using  coupling by change of measures.  Although  the asymptotic log-Harnack inequality has been  previously  used to establish the uniqueness of invariant probability measures, asymptotic heat kernel estimates, and the irreducibility of Markov semigroups  \cite{MR2861312,BWY2019}, its application in the numerical analysis of stochastic partial differential equations appears to be novel.  The asymptotic log-Harnack inequality allows us to make sharper error analysis without noise conditions used in \cite{ABNP21, CS22}.

   To realize our techniques, we make  the following assumptions on $A, f$ and $W_t$.

\beg{enumerate} \item[{\bf (A)}]  $(A,\D(A))$ has discrete spectrum  with all eigenvalues $ \{\ll_i>0\}_{i\ge 1}$
 including multiplicities such that $\ll_i\uparrow \infty$ as $i\uparrow\infty$. Next,   the quadric form
 $$\EE(u,v):=\<A^{\ff 1 2}u, A^{\ff 1 2}v\>_\H,\ \ u,v\in \dot\H^1:= \D(A^{\ff 1 2})  $$ is a Dirichlet form,
 where $\<\cdot,\cdot\>_\H$ denotes the inner product in $\H.$
 Moreover,     the following Nash inequality with dimension $d\in (0,\infty)$ holds for   some constant $c>0$:
 \beq\label{NS} \|u\|_{L^2(\mu)}\le c \|u\|_{\dot\H^1}^{\ff{d}{d+2}} \|u\|_{L^1(\mu)}^{\ff 2 {d+2}},\ \ u\in \dot\H^1.\end{equation}
 \item[{\bf (F)}] $f\in C^1(\R)$ satisfies the following conditions   for some constants $m, \kk_1, \kk_2>0$:
 \beq\label{F2}   sf(s) \ge   \kk_2 |s|^{2m+2} -\kk_1,\ \ s\in\R,\end{equation}
\beq\label{F3} f'(s)  \ge  -\kk_1,\ \ \  s \in\R,\end{equation}
\beq\label{F4} |f'(s)|  \le  \kk_1(1+|s|^{2m}),\ \ \ s \in\R.\end{equation}
 \item[{\bf (W)}]  The Wiener process $W_t$ is given by
\beq\label{W} W_t= \sum_{i=1}^\infty \ss{q_i}\, W_t^i e_i,\ \ t\ge 0,\end{equation}
 where  $\{W^i\}_{i\ge 1}$ are   independent one-dimensional Brownian motions      on a probability basis $(\OO, \F, \{\F_t\}_{t\ge 0}, \P),$   $\{q_i \}_{i\ge 1}$  are nonnegative constants satisfying
\beq\label{Q0}  \sum_{i\ge 1} q_i<\infty,\end{equation}
 and  $\{e_i\}_{i\ge 1}$ is  the associated unitary eigenfunctions of $A$ with eigenvalues   $\{\ll_i\}_{i\ge 1}$,  which is an orthonormal basis of $L^2(\mu)$.
 \end{enumerate}

Under the above conditions,   the   existence and uniqueness of the (mild and variational) solution to \eqref{E1}  is standard, see
for instance  \cite[Theorem 6.2.3]{Cer01} and \cite[Lemma 2.2]{CS22}.
By \eqref{Q0} and   the monotonicity of $-A-\ff 1\vv f$,    \cite[Theorem 2.6]{RW08} ensures  It\^o's formula   for $\|u_t\|_{\H }^2$,  and also for
$ \|u_t\|_{\dot \H^\aa}^2 $  if $\aa\in (0,1]$ and
 \beq\label{Q}  \sum_{i=1}^\infty q_i \ll_i^\aa<\infty,\end{equation}
where $\dot\H^\aa := \D(A^{\ff\aa 2})$ and
$$\|u\|_{\dot \H^\aa}^2:=\sum_{i=1}^\infty \ll_i^\aa \<u,e_i\>_\H^2.$$

\begin{exa}\label{ex-f}
A simple example of $(A,f)$ satisfying the above  conditions is that
  $E=D$ is   a bounded regular domain in $\R^d$,  $V\in C^1(D)$ such that $\mu(\d x):=   \e^{V(x)}\d x$ (by shifting   $V$ with a constant which does not change $\nn V$,
   we can always assume that $\e^V$ is a probability density)  is  a probability measure on $D$,   $A= - (\DD+ \nn V \cdot \nn + 1)$ with  Neumann, Dirichlet or mixture boundary condition,
and   $$f (s)= g(s) + c s^{2m+1},\ \ s\in \R,$$
  where $s^{2m+1}:=  |s|^{2m+1}  {\rm sgn}(s),  c>0$ is a constant, and $g$ is a polynomial of degree less than $2m+1$.   Note that in applications, if $A=-(\DD+ \nn V \cdot \nn),$ one can replace the nonlinearity $f$ by $f+1$,
 and still consider  the dominant operator of the form $-(\DD+\nn V\cdot \nn+1).$
  \end{exa}

 To numerically discretize \eqref{E1}, we first consider the following
spectral Galerkin approximation for $N\in\mathbb N$ (the set of positive natural numbers):
 \begin{align}\label{sac}
 \text{d}u^N_t+Au^N_t\text{d}t+\frac 1{\vv}\pi_N f(u^N_t)\text{d}t=\pi_N \text{d}W_t, \ \ u_0^N =\pi_N u_0,
 \end{align} where $\pi_N$ is the spectral projection:
 $$\H\ni x\mapsto \pi_Nx:= \sum_{i=1}^N\<x,e_i\>_\H e_i \in \H_N:=\text{span}\{e_i: 1\le i\le N\}.$$
 Next, we introduce  the fully discrete scheme via a tamed technique (see, e.g., \cite{MR2985171,BCW22}) for \eqref{sac} with a small time stepsize  $\tau\in (0,1]$.
Let
\beg{align}\label{def-kt} &k(t):=\sup\big\{k\in \mathbb N:\ k\tau\le t\big\},\ \ \tau_k:= k \tau,\ \ \ k\in \mathbb N,\ t\ge 0,\\
&\label{def-thetat}
\Theta_{\tau,\sigma}(u):= \ff{-\vv^{-1} f(u)}{1+\tau \|u\|_{\dot \H^{\sigma}}^{2m+1}},\ \
 \Theta_{0}:=\Theta_{0,\sigma}=-\vv^{-1}f,\ \ \si\in (0,\infty).\end{align}
By convention, $\tau_0=0$.
 For any $N\in\N$, the fully discrete tamed Euler scheme is defined by
\begin{align*}
&u_{\tau_{k+1}}^{N,\tau}=\e^{-A\tau} u_{\tau_k}^{N}+ \int_{\tau_k}^{\tau_{k+1}}\e^{-A(\tau_{k+1}-t)}\pi_N \Big(\Theta_{\tau,\sigma}(u_{\tau_k}^{N,\tau})\,\d t+ \d W_t\Big),\\
&  \ \ \ k\in \mathbb N,\ \ u_0^{N,\tau}=u_0^N=\pi_N u_0.
\end{align*}
The continuous interpolation of this discrete scheme is given by the stochastic equation
\beq\label{TF}
 	 \d u_{t}^{N,\tau}= \big\{ \pi_N   \Theta_{\tau,\sigma}(u_{\tau_{k(t)}}^{N,\tau})-Au_{t}^{N,\tau}  \big\}\d t +\pi_N \d W_t,\ \  t\ge 0,\ u_0^{N,\tau}=u_0^N=\pi_N u_0.
 \end{equation}
It is easy to see that this SDE has a unique solution in $\H$, which is continuous in $\dot\H^\beta$ for any $\beta\in (0,\infty)$ since
$\|u\|_\H$ and $\|u\|_{\dot\H^\beta}$ are comparable for $u\in\H_N$.

We aim to   estimate the truncated $L^1$-Wasserstein distance between $u_t$ and $u_t^{N}$ and that between  $u_t$ and $u_t^{N,\tau}$ for large $N$ and small $\tau$:
\begin{align*}
\hat \W_1(u_{t}, u_{t}^{N}):= \sup_{\|\psi\|_{b,1}\le 1} \big|\E\big[\psi(u_{t})-   \psi(u_{t}^{N})\big]\big|,\\
\hat \W_1(u_{t}, u_{t}^{N,\tau}):= \sup_{\|\psi\|_{b,1}\le 1} \big|\E\big[\psi(u_{t})-   \psi(u_{t}^{N,\tau})\big]\big|,
\end{align*}
where  $\psi\in C_b^1(\H)$ and
$$\|\psi\|_{b,1}:= \|\psi\|_\infty+\|\nn\psi \|_\infty$$
with $\|\psi\|_{\infty}=\sup\limits_{x\in \H} |\psi(x)|$ and $\|\nn \psi \|_{\infty}=\sup\limits_{x\in \H}\|\nabla \psi(x)\|_{\H}.$

\
The remainder of the paper is organized as follows.
In Section \ref{sec-2},  we state  the main results of the paper, which provide explicit decay rates of  $\hat \W_1(u_{t}, u_{t}^{N})$ and $\hat \W_1(u_{t}, u_{t}^{N,\tau})$  as $N\to\infty$ and $\tau\to 0$,  and with explicit algebraic order of $\vv^{-1}$ for small $\vv\in (0,1)$. These convergence  rates in $N\to\infty$ and
$\tau\to 0$ can be sharp as shown in Remark \ref{Remark 2.1.}(ii) and  Remark \ref{Remark 2.2}. Noting  that the existing upper bound of numerical  weak error for \eqref{E1} derived in the literature  is exponential in $\vv^{-1}$ (see \cite{BG18B,CH18,CHS21,Bre22,MR4756577} and references therein), the contribution of the present work is reducing
the order from exponential to algebraic.
In Section \ref{sec-3},  we present some moment estimates  on $u_t, u_t^N$ and $u_t^{N,\tau}$ with algebraic order of $\vv^{-1}$.
To deal with the possibly degenerate noise, in Section  \ref{sec-4}  we make use of the essential elliptic condition to show the asymptotic log-Harnack inequalities
for both Eq. \eqref{sac} and its spectral Galerkin approximation, which implies  the regularity and  asymptotic strong Feller property  of the associated Markov semigroup.
Finally, with above preparations, Sections \ref{sec-5}-\ref{sec-6} provide complete proofs of the main results.

\section{Main results}
\label{sec-2}

Let  $\kk_1$ be in \eqref{F2}. For any $\vv\in (0,1)$ and $N\in\mathbb N$,   define
\beq\label{AN} \beg{split}
 &N_{\vv}:=\inf\Big\{N\in \mathbb N: \ll_{N+1} \ge \ff{1+\kk_1}\vv\Big\},\\
& \gg(\vv):= \sup_{1\le i\le N_{\vv}} q_i^{-\ff 1 2},\ \ \ \
 \dd_N:= \sum_{i=N +1}^\infty q_i\ll_i^{-1}.\end{split}\end{equation}
 Note that  \eqref{Q0} implies  $\dd_N\le \ll_{N+1}^{-1}$ which goes to $0$ as $N\to\infty$, and when $\eqref{Q}$ holds we have better estimate $\dd_N\le \ll_{N+1}^{-1-\aa}$. Moreover, to ensure the finiteness of $\gg(\vv)$ we only need $q_i>0$ for $i\le N_\vv$, i.e. the Wiener process $W_t$ in \eqref{W} may be degenerate with $q_i=0$ for $i>N_\vv$.

  For any $x\in \H$, let $u_t(x)$ solve \eqref{E1} with $u_0=x$, and let $u_t^N(x)$ solve \eqref{sac} for $u_0^N=\pi_Nx$. The  following result provides some convergence rates for the spectral Galerkin approximation,  which can be sharp as shown in Remark \ref{Remark 2.1.}(ii) below.

 \beg{thm}\label{T1}  Assume {\bf (A)}, {\bf (F)} and {\bf (W)} with $ \aa_{m,d}:= \ff{md}{2m+1}\le 1$,  let $\eqref{Q}$ hold for some  $\aa\in [\aa_{m,d},1]$   and  let  $\gamma(\vv)<\infty$ in $\eqref{AN}$.
Then
  the following assertions hold.
 \beg{enumerate}\item[$(1)$]    There exists  a constant $c>0$ such that for any $x\in \dot \H^{\aa_{m,d}},t>0,$   and $N\in\N ,$
 \beq\label{BJ1}\beg{split}    &\hat\W_{1}\big(u_t(x),  u_t^N(x)\big) \\
 &\le c\gg(\vv) \vv^{-\ff{md+5}2} (1+t)\big(1+\|x\|_{\dot\H^{\aa_{m,d}}}^{2m+1}\big)\big(\|x-\pi_Nx\|_\H + \dd_N^{\ff 1 4}+\ll_{N+1}^{-\ff 1 2}\big).
  \end{split}\end{equation}
 \item[$(2)$]  If $d\in (0,2)$, then for any $\beta \in (\frac d2,1],$ there exists  a constant $c>0$ such that
  \beq\label{BJ2}\beg{split}   & \hat\W_{1}\big(u_t(x),  u_t^N(x)\big) \\
  &\le c\gg(\vv)\vv^{-\beta m-\ff{md+5}2}(1+t) \big(1+\|x\|_{\dot\H^{\beta}}^{4m+1}\big)\big(\|x-\pi_Nx\|_\H+\dd_N^{\ff 1 2}+  \ll_{N+1}^{-1}\big).
  \end{split}\end{equation}
 \item[$(3)$]   If $d\in [2,2+m^{-1}]\cap [2, 4)$, then for any   $\bb\in (\ff d 2,2)\cap [\alpha,1+\alpha]$, there exist a constant $c>0$ such that
 \beq\label{BJ3}\beg{split}    &\hat\W_{1}\big(u_t(x),  u_t^N(x)\big)\\
  &\le c \gg(\vv)\vv^{-m(md+2)-\ff{md+5}2}(1+t) \big(1+\|x\|_{\dot\H^\bb}^{(2m+1)^2}\big)\big(\|x-\pi_Nx\|_\H+\dd_N^{\ff 1 2}+  \ll_{N+1}^{-1}\big).
  \end{split}\end{equation}
  \end{enumerate}
 \end{thm}

\begin{rem}\label{Remark 2.1.}
\begin{enumerate}
\item[(i)] Noting that  \eqref{NS} implies
$\ll_N\ge c_1N^{\ff 2 d} $ for some constant $c_1>0$, we have that in \eqref{AN} $N_\vv\le c_2 \vv^{-\ff d 2}$ for some constant $c_2>0$. So, for $q_i\sim i^{-m_1}$ for some $m_1>1$, we find a constant $c_3>0$ such that
$$\gg(\vv)\le c_3 \vv^{-\ff{m_1d}4},\ \ \ \dd_N\le c_3 N^{1-m_1-\ff 2 d},$$
so that   \eqref{BJ1}-\eqref{BJ3} present   algebraic convergence rate  $N^{-a_1}$ and multiplication $\vv^{-a_2}$ with explicit $a_1,a_2>0.$  Comparing with
\eqref{BJ1}, the other estimates  \eqref{BJ2}-\eqref{BJ3}  provide faster convergence order  in $N^{-1}$ but with larger multiplication in $\vv^{-1}$.
 \item[(ii)] The error term $\|x-\pi_Nx\|_\H+ \dd_N^{\ff 1 2}$ is the exact convergence rate as $N\to\infty$  for $f=0$. So, the convergence rate in \eqref{BJ2}  and \eqref{BJ3} are sharp when $\ll_{N+1}^{-1}\le c \dd_N^{\ff 1 2}$ for some constant $c>0,$ it is the case if $\ll_N \sim N^{\ff 2 d}$ and $q_N\sim N^{-1-\theta}$ for some $\theta\in (0,\ff 2 d]$.
\item[(iii)] By imposing additional regularity on $f$, specifically assuming that  $f\in C^2(\R)$, one  can enhance the spatial weak convergence order w.r.t. $N^{-1}$ via the corresponding Kolmogorov equation (see, e.g., \cite{MR4756577,CH18,CJK19,MR4797879} and references therein). For instance, considering  the polynomial in Example \ref{ex-f} with an integer degree,   it can be shown that the second derivative of the solution of the corresponding Kolmogorov equation is finite.
Thanks to Lemma \ref{L4} and the Kolmogorov equation, the convergence rate $\mathcal O(\|x-\pi_Nx\|_\H+  \ll_{N+1}^{-1}+\dd_N^{\ff 1 2})$ in Theorem \ref{T1} (2)-(3) can  be improved as follows: for  any $\alpha_1\in [0,2)$, there exists a constant $c(x,\vv^{-1},t,\psi)>0$ such that 
any $x\in \dot \H^{\beta}$ with $\beta\in (\frac d2,2)$,  $t>0$ and $\psi\in C_b^2( \H)$,
\begin{align*}
&|\E [{ \psi}(u_t(x))]-\E[{\psi}(u_t^N(\pi_N x))]|\\
&\le c(x,\vv^{-1},t,\psi) (1+t^{-\frac {\alpha_1} 2}) \Big(\lambda_N^{-\frac {\alpha_1} 2-\frac {\beta} 2}+\delta_N^{\frac 12} +\|(1-\pi_N)x\|_{{\dot \H}^{-\alpha_1}}\Big).
\end{align*}
Here $c(x,\vv^{-1},t,\psi)$ is a positive polynomial of $\|x\|_{\dot \H^{\beta\vee 1}}$,  $\vv^{-1}$, $\|\nabla \psi\|_{\infty}+\|\psi\|_{\infty}$. The space $\dot \H^{-\alpha_1}$ is the dual space of   $\dot \H^{\alpha_1}$. Note that the degree of  $\vv^{-1}$ in $c(x,\vv^{-1},t,\psi)$ is higher than the degrees in \eqref{BJ2}-\eqref{BJ3}, see Appendix \ref{appendix} for details.
\end{enumerate}

\end{rem}

We now present the main convergence result of the tamed fully discrete scheme with time stepsize $\tau\in (0,1)$ and constant $\si$ in \eqref{def-thetat} satisfying
\beq\label{SGM}\si\in \beg{cases}[\aa_{m,d},1], &\text{if}\ \aa_{m,d}<1,\\
(1,2), &\text{if}\ \aa_{m,d}=1.\end{cases}\end{equation} 
Then, when $\aa_{m,d}<1$ we have  
$$\hat q:= \ff{d}{(2d)\land (d+2(1-\si)^+)- 2m (d-2\si)^+}<\ff d{(d-2\si)^+}.$$
 For fixed  constants
 \beq\label{QD}\beta \in (\aa_{m,d}, 2),\ \ \   q_{d,\si}\in \Big[\hat q,\ \frac {d}{(d-2\sigma)^+}\Big)\ \text{when}\  \aa_{m,d}<1,\end{equation}
    let
\beq\label{DD}\beg{split}
&\gg_1:= \ff 1 2 (2m+1) (\beta-\aa_{m,d}),\\
&\gg_2:=   \max\bigg\{\ff{4q_{d,\sigma}}{d+(2\sigma-d)q_{d,\sigma}},\ \ff 2 {\sigma}\bigg\}\ \text{when}\ \aa_{m,d}<1,\\
&\gg_3:=  \ff{2}{\sigma-1}  \ \text{when}\ \aa_{m,d}=1,\\
   & a_{t,x}:= (1+t)(1+\|x\|_{\dot\H^\beta}),\ \ \ t\ge 0, x\in\dot\H^\beta.\end{split}\end{equation}

 We have the following result for the convergence of the tamed Euler scheme, where the convergence rate can be sharp as shown in Remark \ref{Remark 2.2}.

\beg{thm}\label{T2}
  Assume {\bf (A)}, {\bf (F)} and {\bf (W)} with $ \aa_{m,d}:= \ff{md}{2m+1}\le 1$,   let $\eqref{Q}$ hold for   some $\aa\in [\aa_{m,d},1],$   let $\si$ satisfy $\eqref{SGM}$, and let
   $\gg_1,\gg_2,\gg_3$ and $a_{t,x}$ be defined in $\eqref{DD}$ for constants $\bb$ and $q_{d,\si}$ satisfying $\eqref{QD}$.
  Then
  the following assertions hold for all $x\in\dot\H^\beta, t\ge 0, \vv\in (0,1]$, $N\in \mathbb N$ and small $\tau>0$ satisfying
  \beq\label{TT}  \tau \le  \beg{cases}\vv^{ [\ff 9 2+ 12(2m+1)(m+1)]\gg_2}   (1+\|{x}\|_{\dot\H^\sigma})^{1-6(2m+1)^2\gg_2}, &\text{if} \; \aa_{m,d}<1, \\
 \vv^{[\frac 92+ 3(2m+1)^2(2m+3)+6(2m+1)]\gamma_3-\ff {2m+1}2}(1+\|{x}\|_{\dot\H^\sigma})^{-(2m+1)[6\gamma_{3}(2m+1)^2 -1]}, &\text{if} \; \aa_{m,d}=1.\end{cases}
 \end{equation}
\beg{enumerate} \item[$(1)$] When $\alpha_{m,d}<1$ and $\bb\in [\si,1],$ there exists a  constant $c>0$ such  that
      \beq\label{A} \hat\W_1(u_t^N(x), u_t^{N,\tau}(x))
\le c  \tau^{\gg_1\wedge \frac 12}  (1+t) \gg(\vv) \vv^{-\frac 72-4m} a_{t,x}^{4m+2}.
\end{equation}
  \item[$(2)$] When $\alpha_{m,d}<1$ and $\beta\in (\sigma,1+\alpha]$, there exists a constant $c>0$ such that
     \beq\label{B} \beg{split}  &\hat\W_1(u_t^N(x), u_t^{N,\tau}(x))  \le  c  \tau^{\gg_1\land \frac 12} (1+t)  \gg(\vv)\vv^{-\frac 12-(2m+2)^2} a_{t,x}^{ (2m+1)(2m+2)}.\end{split} \end{equation}
  \item[$(3)$] When $\aa=\aa_{m,d}=1$ and   $\beta \in [\sigma , 2)$,
  there exists a constant $c>0$ such that
     \beq\label{B'} \beg{split}  &\hat\W_1(u_t^N(x), u_t^{N,\tau}(x))  \\
     &\le  c  \tau^{\gg_1\land \frac 12} (1+t) \gg(\vv)\vv^{-(m+1)(2m+1)(2m+3)-2m-\frac 32}a_{t,x}^{2(m+1)(2m+1)^2}.\end{split} \end{equation}
\end{enumerate}
  \end{thm}
\begin{rem}\label{Remark 2.2}
\begin{enumerate}
 We may take $\beta \in [\ff{md+1}{2m+1},2)$ such that  $\gg_1\ge \ff 1 2$.
As a consequence,  Theorem \ref{T2} provides  the sharp convergence rate  $\tau^{\ff 1 2}$  in the tamed Euler scheme    for SDEs.
\end{enumerate}
\end{rem}

Combining Theorem \ref{T1}(2) with  \eqref{A} in Theorem \ref{T2}(1),    respectively Theorem \ref{T1}(3) with  \eqref{B} in Theorem \ref{T2}(2)-(3),   and noting that when
$\aa\ge 1$ the condition \eqref{Q} implies $\dd_N\le  \ll_{N+1}^{-2} $ for large $N$,
we derive the following overall error estimate on $\hat\W_1(u_t,u_t^{N,\tau})$ for $\vv\in (0,1),$ large $N\ge 1$ and small $\tau>0$.

\begin{cor}\label{cor-main}  In the situation of Theorem $\ref{T2}$, the following assertions hold for all $x\in\dot\H^\beta, t\ge 0, \vv\in (0,1]$, $N\in \mathbb N$ and small $\tau>0$ satisfying $\eqref{TT}.$
 \beg{enumerate} \item[$(1)$] When $\aa_{m,d}<1, d\in (0,2),$ and $\beta\in (\frac d2,1]\cap [\sigma, 1],$  there exists a constant $c>0$ such that
  \beg{align*}   & \hat\W_{1}(u_t(x), u_t^{N,\tau}(x))
   \le   c\gg(\vv)(1+t)\\
   &\qquad\quad\times \Big[\tau^{\frac 12\land \gg_1} \vv^{-\ff 7 2-4m} a_{t,x}^{4m+2}
   +\vv^{-\beta m-\ff{md+5}2}\big(1+\|x\|_{\dot\H^{\beta}}^{4m+1}\big) \big(\|x-\pi_Nx\|_\H +\dd_N^{\ff 1 2}+\ll_{N+1}^{-1} \big)
  \Big].
\end{align*}
\item[$(2)$] When $\aa_{m,d}<1, d\in [2, 2+m^{-1}]\cap [2,4),$ and $\beta \in (\ff d 2 ,2)\cap [\sigma,1+\aa]$,  there exists a constant $c>0$ such that
   \beg{align*}   & \hat\W_{1}(u_t(x), u_t^{N,\tau}(x))  \le
     c  \gg(\vv)(1+t)
      \Big[ \tau^{\gg_1\land \frac 12} \vv^{-\frac 12-(2m+2)^2} a_{t,x}^{ (2m+1)(2m+2)}  \\
&\qquad \qquad\qquad\qquad+ \vv^{-m(md+2)-\ff{md+5}2} \big(1+\|x\|_{\dot\H^\beta}^{2m(2m+1)}\big)\big(\|x-\pi_Nx\|_\H+ \dd_N^{\ff 1 2}+  \ll_{N+1}^{-1}\big) \Big].
\end{align*}
\item[$(3)$] When $\aa=\aa_{m,d}=1$ and $\beta\in (\frac d 2 , 2)\cap [\sigma,2),$
 there exists a constant $c>0$ such that
   \beg{align*}   & \hat\W_{1}(u_t(x), u_t^{N,\tau}(x))
   \le  c  {\gg(\vv)}(1+t)\Big[ \tau^{\gg_1\land \frac 12} \vv^{-(m+1)(2m+1)(2m+3)-2m-\frac 32}a_{t,x}^{2(m+1)(2m+1)^2}  \\
&\qquad \qquad \qquad\qquad+ \vv^{-m(md+2)-\ff{md+5}2} \big(1+\|x\|_{\dot\H^\beta}^{2m(2m+1)}\big)\big(\|x-\pi_Nx\|_\H+ \dd_N^{\ff 1 2}+  \ll_{N+1}^{-1}\big) \Big].
\end{align*}
\end{enumerate}
\end{cor}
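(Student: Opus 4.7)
The plan is to derive Corollary \ref{cor-main} directly from the two preceding theorems by splitting the total error via the triangle inequality for the truncated Wasserstein pseudo-metric. Since $\hat\W_1(\mu,\nu)$ is defined as a supremum of $|\E[\psi(X)-\psi(Y)]|$ over test functions with $\|\psi\|_{b,1}\le 1$, it inherits the triangle inequality, so
\[
\hat\W_{1}(u_t(x), u_t^{N,\tau}(x)) \le \hat\W_{1}(u_t(x), u_t^{N}(x)) + \hat\W_{1}(u_t^{N}(x), u_t^{N,\tau}(x)).
\]
The first term is the spectral Galerkin error controlled by Theorem \ref{T1}, while the second is the temporal (tamed Euler) error controlled by Theorem \ref{T2}. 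Each item of the corollary is then obtained by selecting matching cases in the two theorems and summing the resulting bounds.

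Specifically, for item (1) the standing conditions $\aa_{m,d}<1$, $d\in(0,2)$ and $\beta\in(d/2,1]\cap[\sigma,1]$ simultaneously fit the hypotheses of Theorem \ref{T1}(2) (which needs $d\in(0,2)$ and $\beta\in(d/2,1]$) and of Theorem \ref{T2}(1) (which needs $\aa_{m,d}<1$ and $\beta\in[\sigma,1]$); adding \eqref{BJ2} and \eqref{A} gives the claimed bound. For item (2) the range $d\in[2,2+m^{-1}]\cap[2,4)$ and $\beta\in(d/2,2)\cap[\sigma,1+\aa]$ activates Theorem \ref{T1}(3) and Theorem \ref{T2}(2); adding \eqref{BJ3} and \eqref{B} yields the result. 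Item (3) uses the same Galerkin estimate \eqref{BJ3} combined with Theorem \ref{T2}(3), this time under $\aa=\aa_{m,d}=1$ and $\beta\in(d/2,2)\cap[\sigma,2)$, and the time-step restriction \eqref{TT} is exactly the hypothesis of Theorem \ref{T2} in each regime.

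Two minor bookkeeping points must be checked. First, the remark made in the statement — that when $\aa\ge 1$ the condition \eqref{Q} improves $\dd_N\le\ll_{N+1}^{-1}$ to $\dd_N\le c\ll_{N+1}^{-1-\aa}$ — follows at once from
\[
\dd_N=\sum_{i>N}q_i\ll_i^{-1}\le \ll_{N+1}^{-1-\aa}\sum_{i>N}q_i\ll_i^{\aa},
\]
so that $\dd_N^{1/2}+\ll_{N+1}^{-1}$ in case (3) is genuinely $\OO(\ll_{N+1}^{-1})$. Second, one must verify that the polynomial prefactors in $\|x\|_{\dot\H^\beta}$ and $t$ coming from the two theorems can be absorbed into the single factor $a_{t,x}^{r}=(1+t)^r(1+\|x\|_{\dot\H^\beta})^r$ written in the corollary; this is automatic because $\|x\|_{\dot\H^{\alpha_{m,d}}}\le \|x\|_{\dot\H^\beta}$ whenever $\alpha_{m,d}\le\beta$, which is part of the stated hypotheses in all three cases.

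There is no genuine mathematical obstacle: the corollary is a consequence of adding two previously established inequalities. The only care needed is in aligning the several regularity parameters ($\sigma$, $\beta$, $\aa$, $\aa_{m,d}$, $d$) so that the chosen case of Theorem \ref{T1} and the chosen case of Theorem \ref{T2} are simultaneously applicable, and in keeping track of the constants so that the final prefactor $\gg(\vv)(1+t)$ can be factored out cleanly; this is exactly what the case splits (1)–(3) in the statement enforce.
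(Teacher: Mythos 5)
Your proposal is correct and is exactly the argument the paper intends: the corollary is obtained by the triangle inequality for $\hat\W_1$, pairing Theorem \ref{T1}(2) with Theorem \ref{T2}(1) for item (1) and Theorem \ref{T1}(3) with Theorem \ref{T2}(2)--(3) for items (2)--(3), together with the observation that $\eqref{Q}$ with $\aa\ge 1$ gives $\dd_N\le \ll_{N+1}^{-1-\aa}$. Your parameter-compatibility checks (e.g.\ $\beta>d/2\ge\aa$ in cases (2)--(3), and $\|x\|_{\dot\H^{\aa_{m,d}}}\le\|x\|_{\dot\H^\beta}$) match what the paper implicitly relies on.
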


\begin{rem}\label{Remark 2.6.} Let us compare our result with those derived in \cite{ABNP21} and \cite{CS22}
with polynomial dependence on $\vv^{-1}$.

In \cite{ABNP21}, when $W_t$ is replaced by $\vv^{\gamma}gB_t$ for a constant $\gg>0$, a one-dimensional Brownian motion $B_t$ and a smooth function $g$,   the authors use  deterministic spectral estimates in \cite{C94} to show the polynomial dependence on $\vv^{-1}$ of the strong error for the drift implicit Euler scheme of a stochastic Cahn--Hilliard equation. The techniques in \cite{ABNP21} make use of the properties of the strong solution (see, e.g. \cite[Theorem 3.8, Theorem 4.3]{ABNP21}) and thus require more  regularity condition on the initial data $(\text{like}\; u_0\in \dot \H^{3})$ and the noise $($like $W$ is smooth in space$)$.
In this paper, without any smallness of noise, and for mild conditions on the initial data,
 we show the  polynomial dependence on $\vv^{-1}$ of the numerical error under a truncated Wasserstein metric for the proposed schemes. Note that the week error under the truncated Wasserstein metric is bounded by the strong error under the $L^p(\Omega;\H)$ norm. Our approach also has a great potential to study the numerical weak error bound of stochastic Cahn--Hilliard equation, and this will be reported in another paper.

In \cite{CS22}, under a non-degenerate condition on the noise $($i.e., $q_i>0$ for all $i\in {\mathbb N})$, for the stochastic Allen--Cahn equation, the authors make use of the strong Feller property and exponential ergodicity to show that the weak error with certain test function of a semi-discrete splitting scheme depends on $\vv^{-1}$ polynomially. The result in \cite{CS22} (Theorem 1.1)
could cover the space-time white noise case $(d=1, q_i=1, \alpha< -\frac 12\; \text{in}\; \eqref{Q})$, while our main focus is on the cases that  the noise may be degenerate (i.e., \eqref{Q0}). Indeed, our result does not reply on the exponential ergodicity or the strong Feller property, and can deal with the degenerate noise as soon as $\gamma(\vv)<\infty$ in $\eqref{AN}$.  By changing the tamed scheme into other time discretization, such as the temporal semidiscrete splitting scheme \cite{CS22}, one may improve the restriction on the time stepsize \eqref{TT} for the numerical scheme.
\end{rem}

 \section{Moment estimates on $u_t, u_t^N$ and $u_t^{N,\tau}$}
\label{sec-3}
 In this section, we present some estimates on  the solutions    to   the original problem \eqref{E1}, the  spatial Galerkin approximation \eqref{sac}, and the numerical scheme \eqref{TF},
  which will be used in the proofs of Theorems \ref{T1} and \ref{T2}.

We first observe that by \cite[Corollary 3.3.4]{Wbook}, \eqref{NS} is equivalent to that for some constant $c>0$
 \beq\label{FS} \|u\|_{2+\ff 4 d}\le c\|u\|_{\dot\H^1}^{\ff d{d+2}} \|u\|_{L^2(\mu)}^{\ff 2{d+2}},\ \ u\in \D(\EE),\end{equation}
 and when $d>2$, it is also equivalent to the Sobolev inequality
 $$ \|u\|_{L^{\ff{2d}{d-2}}(\mu)} \le c \|u\|_{\dot\H^1},\ \ u\in \D(\EE) $$ for some (different) constant $c>0$.
 Moreover,   by \cite[Theorem 1.1]{BM}, for any $\aa\in (0,1],$ \eqref{NS} implies
the same type inequality for $(\|\cdot\|_{\dot\H^\aa}, d/\aa)$ replacing $(\|\cdot\|_{\dot\H^1},d)$, and hence
\beq\label{S}  \|u\|_{L^{\ff{2d}{d-2\aa}}(\mu)} \le c(\aa) \|u\|_{\dot\H^\aa},\ \      \aa\in (0,1]\cap \Big(0, \ff d 2\Big),\end{equation}
where  $c(\aa)\in (0,\infty)$ is a constant depending on $\aa$.

  For  $N\in\bar\N:=  \N \cup \{\infty\}$. By \eqref{Q},
\beq\label{Z0} Z_t^N  :=\int_0^t \pi_N \e^{- A  (t-s) } \d W_s=\sum_{i=1}^N q_i^{\ff 1 2} e_i \int_0^t \e^{- \ll_i  (t-s)}\d W_i(s),\ \ t\ge 0 \end{equation}  is a continuous Gaussian process on $\H$.
By \eqref{sac}, we have
\beq\label{Z1} u_t^N= \e^{-At} u_0^N-\ff 1\vv\int_0^t \pi_N \e^{A(t-s)}  f(u_s^N)\d s+ Z_t^N,\ \ t\ge 0,\end{equation}
where $u_t=u_t^N$ for  $N=\infty$. Moreover, for any $N\in\mathbb N$, \eqref{TF} implies
\beq\label{Z1'} u_t^{N,\tau}= \e^{-At} u_0^N+  \int_0^t \pi_N \e^{A(t-s)}  \Theta_{\tau,\sigma} (u_{\tau_{k(s)}}^{N,\tau}) \d s + Z_t^N,\ \ t\ge 0.\end{equation}

\subsection{Estimates on $u_t$ and $u_t^N$}

We first estimate the Gaussian process $Z_t^N$.

\beg{lem}\label{L1}  Assume {\bf (W)} and let $\eqref{Q}$ hold for   some $\aa\in [0,1].$ For any constant $p\ge 2$, there exists a constant $c >0$ such that for all $N\in\bar\N$ and $\beta \in [\alpha,1+\alpha]$,
\beg{align} \label{ES1} &\sup_{t\ge 0} \|Z_t^N\|_{L^p(\OO;\dot\H^\beta)}\le c \Big(\sum_{i=1}^N \ll_i^{\beta-1}q_i\Big)^{\ff 1 2},\\
 \label{ES2}  &\|Z_t^N-Z_s^N\|_{L^p(\OO;\H)}\le c \Big(\sum_{i=1}^N q_i\Big)^{\ff 1 2} \ss{t-s},\ \ t\ge s\ge 0.\end{align}
\end{lem}

\beg{proof} For any $t\ge s\ge 0$, let
$$  \xi_i^{s,t}:= \int_s^t \e^{- \ll_i (t-r)}\d W_i(r).$$
Then   $\xi_i^{s,t}$ is a centered normal random variable  with variance
$$  {\text{Var}}(\xi_i^{s,t})^2:=  \int_s^t \e^{-2\ll_i (t-r)}\d r.$$
So, for any $p\ge 2$ there exists a constant $c_1  >0$ such that
\beq\label{V}
\E\big[|\xi_i^{s,t}|^p\big]  \le c_1  \bigg(\int_s^t \e^{-2\ll_i (t-r)}\d r\bigg)^{\ff p 2} \le c_1 \Big((t-s)\land \ff 1 {\ll_i}\Big)^{\ff   p 2}. \end{equation}
Moreover, it is easy to see that
\beg{align*}  Z_t^N-Z_s^N &=\sum_{i=1}^N e_i \bigg[ (\e^{-\ll_i  (t-s)}-1)  \<Z_s^N, e_i\> +q_i^{\ff 1 2} \int_s^t\e^{-\ll_i (t-r)} \d W_i(r)\bigg] \\
&= \sum_{i=1}^N e_i q_i^{\ff 1 2}  \Big[(\e^{-\ll_i (t-s)}-1)\xi_i^{0,s} +\xi_i^{s,t}\Big].\end{align*}
Combining this with H\"older's inequality,   for any $\bb_1\ge 0$ we have
  \beg{align*}& \E \big[\|Z_t^N-Z_s^N\|_{\dot\H^\beta}^p\big]\le 2 \E\Big[\sum_{i=1}^N q_i  \ll_i ^\beta \big((\e^{-\ll_i (t-s)}-1)^2|\xi_i^{0,s}|^2 + |\xi_i^{s,t}  |^2\big) \Big]^{\ff p 2} \\
&\le 2 \Big(\sum_{i=1}^N \ll_i ^{\beta -\bb_1}  q_i\Big)^{\ff{p-2}2}
  \sum_{i=1}^N \ll_i ^{\beta+ \ff{\bb_1(p-2)}2} q_i  \Big(|\e^{-\ll_i (t-s)}-1|^p \E[ |\xi_i^{0,s} |^p]+ \E [|\xi_i^{s,t}  |^p]\Big).\end{align*}
By taking  $\bb_1=1$ and $s=0$, we deduce \eqref{ES1} from this and  \eqref{V}, while    taking $\bb=\bb_1=0 $  we obtain
\beg{align*}& \E \big[\|Z_t^N-Z_s^N\|_{\H}^p\big]\\
& \le 2 c_1  \Big(\sum_{i=1}^Nq_i \Big)^{\ff {p-2}2} \sum_{i=1}^N q_i\Big(\big[\ll_i (t-s)\big]^{\ff p 2} \ll_i ^{-\ff p 2} +(t-s)^{\ff p 2}\Big)\\
&\le 4 c_1   \Big(\sum_{i=1}^Nq_i\Big)^{\ff {p}2}(t-s)^{\ff p 2}.\end{align*}
This and \eqref{Q0} imply  \eqref{ES2}   for some constant $c>0$.
\end{proof}

  Next, we estimate  moments of $\|u_t^N\|_{ \dot\H^\beta}$  for $\beta\in [0,2).$

\beg{lem}\label{L2}  Assume   {\bf (A)}, {\bf (F)} and {\bf (W)}.
\beg{enumerate} \item[$(1)$] For any $p\ge 2$ and $\kk>0$, there exists a constant $c>0$ such that
\beq\label{*2} \sup_{ N\in\bar\N}  \|u_t^N\|_{L^p(\OO; \H)} \le   \|u_0\|_\H \e^{-\ff \kk {\vv}t} +c,\ \ t>0.\end{equation}
\item[$(2)$]
Let $\eqref{Q}$ hold for some
  $\alpha\in [0,1].$ Then for any constant $p\ge 2$, there exists a constant $c>0$ such that
\beq\label{*2'}\sup_{t\ge 0,  N\in\bar\N} \|u^N_t\|_{L^p(\Omega; \dot\H^{\alpha})}\le \|u_0\|_{ \dot\H^{\alpha}}+c\vv^{-\frac \alpha 2}(1+\|u_0\|_{\H}).\end{equation}

\item[$(3)$] Let  $\aa_{m,d}:=\ff{md}{2m+1}\le 1$ and $\eqref{Q}$ hold for  some $\aa\in [ \aa_{m,d},1].$ Then for any  $\bb \in [\aa,1+\aa]\cap (1,2)$,   there exists a constant  $c>0$    such that
\begin{align}\label{*2''}
&\sup_{t\ge 0, N\in\bar\N}\|u^N_t\|_{L^p(\Omega;\dot\H^\bb)}\le \|u_0\|_{\dot\H^\bb} +c \vv^{-1}\|u_0\|_{\dot\H^{\aa_{m,d}}}^{2m+1}  +c    \vv^{- \ff{md+2}2}(1+\|u_0\|_\H^{2m+1}).
\end{align}

\end{enumerate} \end{lem}

\beg{proof}
  (1)  By \eqref{Q0},  \eqref{F2},     It\^o's formula and
  $$\|\cdot \|_\H=\|\cdot\|_{L^2(\mu)} \le \|\cdot\|_{L^{2m+2}(\mu)},$$  for any $q\in [1,\infty)$ we find   a constant $c_1  >0$ and a martingale $M_t$
  such that
\beg{align*} \d \|u_t^N\|_\H^{2q}&\le \d M_t +
  2q\|u_t^N\|_\H^{2q-2}\Big(\ff {\kk_1 } \vv + c_1 -\ff{\kk_2} \vv \|u_t^N\|_{L^{2m+2}(\mu)}^{2m+2}\Big)\d t\\
  &\le  \d M_t +
  2q\|u_t^N\|_\H^{2q-2}\Big(\ff {\kk_1 } \vv
  + c_1 -\ff{\kk_2} \vv \|u_t^N\|_{\H}^{2m+2}\Big)\d t.\end{align*}
  Indeed, $\d M_t =2q\|u_t^N\|_\H^{2q-2}\<u_t^N,\d W_t\>_\H.$
  By Young's inequality, for any constant $\kk>0$, we find a constant $c_2>0$ such that
  $$\frac {\kk_2}{\vv} \|u_t^N\|_{\H}^{2m+2q}\ge \frac {\kk}{\vv} \|u_t^N\|_\H^{2q}+(\frac { \kk_1}{\vv}+c_1 )\|u_t^N\|_\H^{2q-2} -\frac {c_2}{\vv},$$
  so that
  $$\d \|u_t^N\|_\H^{2q}\le 2q\|u_t^N\|_\H^{2q-2}\<u_t^N,\d W_t\>_\H +\Big(\ff {2qc_2}\vv -\ff{2q\kk}\vv \|u_t^N\|_\H^{2q} \Big)\d t.$$
   By taking expectation and applying Gronwall's inequality, we obtain
   \beg{align*}   \E[\|u_t^N\|_\H^{2q}]\le \|u_0\|_\H^{2q}\e^{-\ff{2q\kk}\vv t}  + \ff{2qc_2}{\vv}  \int_0^t \e^{-\ff{2q\kk}{\vv}s} \d s\le  \|u_0\|_\H^{2q}\e^{-\ff{2q\kk}\vv t} +\ff {c_2}{\kappa}.\end{align*}
So,  \eqref{*2}  holds for $p=2q$ and $c=(c_2\kappa^{-1})^{\ff 1 {2q}}.$

(2)  Let $u_0\in \D(A^{\ff \aa 2}) $  and
 \eqref{Q} hold. Since $\aa\in (0,1]$ and
$$ \<A^{\ff 1 2}  u, A^{\ff 1 2}  v\>_\H,\ \ u,v\in \D(A^{\ff 1 2})$$ is a Dirichlet form,
$$ \<A^{\ff \aa 2}  u, A^{\ff \aa 2}  v\>_\H,\ \ u,v\in \D(A^{\ff \aa 2}) $$
is a Dirichlet form as well.
So,  \eqref{F3} implies
\beq\label{MY0} \<A^{\ff \aa 2}  u, A^{\ff \aa 2}  f(u)\>_\H \ge -\kk_1 \<A^{\ff \aa 2}  u, A^{\ff \aa 2} u\>_\H= -\kk_1 \|u\|_{\dot\H^\aa}^2,\ \ \aa\in (0,1].\end{equation}
Combining this with  It\^o's formula and \eqref{Q},  for any $q\ge 1$,  we find a constant $c_0>0$
 and a martingale $M_t$ such that
\beq\label{L01} \d \|u_t^N\|_{\dot\H^\aa}^{2q} \le   \d M_t+
   2 q \|u_t^N\|_{ \dot\H^{\alpha}}^{2q-2}\Big( \ff{\kk_1}\vv \|u_t^N\|_{\dot\H^\aa}^2   + c_0 - \|A^{\ff{\aa+1}2} u_t^N\|^2_\H \Big)\d t. \end{equation}
By the H\"older and Young inequalities,   we find some constant $c_1>0$ such that
\beg{align*} & \|u_t^N\|_{\dot\H^\aa}^2 =\sum_{i=1}^\infty \ll_i^\aa \<u_t^N,e_i\>_\H^2\le \bigg(\sum_{i=1}^\infty \ll_i^{1+\aa} \<u_t^N,e_i\>_\H^2\bigg)^{\ff {\alpha}{1+\alpha}}
\bigg(\sum_{i=1}^\infty   \<u_t^N,e_i\>_\H^2\bigg)^{\ff 1{1+\alpha}}\\
& \le  \|A^\ff{\aa+1}2 u_t^N\|_\H^{\frac {2\alpha}{\alpha+1}} \|u_t^N\|_\H^{\frac 2{1+\alpha}}  \le \ff{\vv}{2\kk_1} \|A^{\ff{\aa+1}2} u_t^N\|_\H^2+{c_1} \vv^{-  \alpha} \|u_t^N\|_\H^2.\end{align*}
Thus,
\beq\label{MY}  \|A^{\ff{\aa+1}2} u_t^N\|_\H^2\ge \ff{2\kk_1}\vv \|u_t^N\|_{\dot\H^\aa}^2- 2\kk_1c_1\vv^{-1-\alpha} \|u_t^N\|_\H^2.\end{equation}
Combining this with \eqref{L01} and    Young's inequality
$$\ff{\kk_1}{2} r^{2q}\ge a r^{2q-2} - c(q,\kk_1)  a^q,\ \ a\ge 0, r\ge 0$$
for some constant $c(q,\kk_1)>0$,
  we find a constant $c_2>0$ such that
  \beg{align*} &\d \|u_t^N\|_{\dot\H^\aa}^{2q}-\d M_t\\
  &\le  2q \|u_t^N\|_{\dot\H^\aa}^{2q-2}\Big( 2c_1\kappa_1\vv^{- 1-\alpha}   \|u_t^N\|_\H^2  +c_0   -\frac {\kk_1}{\vv} \|u_t^N\|_{\dot\H^\aa}^{2} \Big) \d t\\
  &\le 2q\Big(  c_2+  c_2 \vv^{-   1-\alpha q} \|u_t^N\|_\H^{2q} - \ff{\kk_1}{2\vv} \|u_t^N\|_{\dot\H^\aa}^{2q}\Big)\d t.\end{align*}
Combining this with \eqref{*2} and Gronwall's inequality, we find a constant $c_3>0$ such that
\beg{align*}     \E\big[\|u_t^N\|_{\dot\H^\aa}^{2q}\big]
&\le \|u_0\|_{\dot\H^\aa}^{2q} \e^{-\ff{q\kk_1}{2\vv} t} +  2q\int_0^t \e^{-\ff{q\kk_1}{2\vv} (t-s)} \Big(  c_2 +  c_2 \vv^{-   1-\alpha q } \E \|u_s\|_\H^{2q} \Big)\d s\\
& \le  \|u_0\|_{\dot\H^\aa}^{2q} \e^{-\ff{q\kk_1}{2\vv} t} +    c_3 \vv^{-  \alpha q } \big(1+  \|u_0\|_\H^{2q} \e^{-\ff{q\kk_1}{2\vv} t} \big).\end{align*}
This implies \eqref{*2'}.

(3)   Let $\bb\in [\alpha,1+\alpha]\cap (1,2)$. By \eqref{ES1} and \eqref{Q}, we have
\beq\label{NM0}\sup_{t\ge 0,N\in\bar\N} \|Z_t^N\|_{L^p(\OO; \dot\H^\bb)}<\infty.\end{equation}
Since $\aa_{m,d}:=\ff{md}{1+2m}\in (0,1]$,   we have  $d\le 2+\ff 1 m $ and
\beq\label{NM0'}
  4 m+2= \ff {2d}{d-2\aa_{m,d}}=\ff{2d/\aa_{m,d}}{d/\aa_{m,d}-2}.\end{equation}
 By \eqref{S} for $\aa_{m,d}\le 1$, there exists a constant $c_1>0$ such that
 \beq\label{PB}
 \|u\|_{L^{4m+2}(\mu)} \le c_1 \|u\|_{\dot\H^{\aa_{m,d}}}.
  \end{equation}
This together with \eqref{*2'}, which holds for the present $\aa_{m,d}=\ff{md}{1+2m}$ due to \eqref{Q}, we obtain
    \beq\label{NM} \|u_t^N\|_{L^p(\Omega;L^{4m+2}(\mu))} \le c_1 \|u_0\|_{\dot\H^{\aa_{m,d}}}+ c_1c\vv^{-\ff {\aa_{m,d}} 2}(1+\|u_0\|_\H).\end{equation}
Noting that \eqref{F4} implies
\beq\label{F1} |f(s)|\le \kk_1 (1+|s|^{2m+1}),\ \ s\in\R \end{equation}
 for  $\kk_1>0$, and
 \beq\label{BN0} \|\e^{-Ar} u\|_{\dot\H^\bb}\le \sup_{\ll\ge \ll_1} \e^{-\ll r}\ll^{\ff\bb 2}
\|u\|_\H  \le c(\bb)r^{-\ff\bb 2} \e^{-\ff{\ll_1}2 r}\|u\|_\H, \ \ r>0\end{equation} for some constant $c(\bb)>0$,
  we find a constant $c_2>0$ such that
$$\|\e^{-Ar} \pi_Nf(u)\|_{\dot\H^\bb}\le   c_2 r^{-\ff\bb 2} \e^{-\ff{\ll_1}2 r} \big(1+\|u\|_{L^{4m+2}(\mu)}^{2m+1}\big),\ \ r>0.$$
Combining this with \eqref{Z1}, \eqref{NM0} and \eqref{NM}, we find  constants  $ c_3,c_4,c_5>0$ such that
 \beq\label{BN} \beg{split}& \|u_t^N\|_{L^p(\OO; \dot\H^\bb)} \le c_3+ \|u_0\|_{\dot\H^\bb} +\ff {c_3}  \vv \int_0^t (t-s)^{-\ff \bb 2}\e^{-\ff{\ll_1}2(t-s)} \Big(\E\big[1+\|u_s^N\|_{L^{4m+2}(\mu)}^{(2m+1)p}\big]\Big)^{\ff 1 p} \d s \\
   &\le c_3+ \|u_0\|_{\dot\H^\bb} +\ff {c_4}  \vv \int_0^t (t-s)^{-\ff \bb 2} \e^{-\ff{\ll_1}2 (t-s)} \Big(\E\big[\|u_s^N\|_{\dot\H^{\aa_{m,d}}}^{(2m+1)p}\big]\Big)^{\ff 1 p} \d s \\
   &\le   \|u_0\|_{\dot\H^\bb}+c_5 \vv^{-1}(1+\|u_0\|_{\dot\H^{\aa_{m,d}}}^{2m+1}) +c_5    \vv^{-1-\ff{{\aa_{m,d}}(2m+1)}2}(1+\|u_0\|_\H^{2m+1})\\
   &=  \|u_0\|_{\dot\H^\bb} +c_5 \vv^{-1}(1+\|u_0\|_{\dot\H^{{\aa_{m,d}}}}^{2m+1}) +c_5    \vv^{- \ff{md+2}2}(1+\|u_0\|_\H^{2m+1}).\end{split}\end{equation}
This completes the proof.

\end{proof}

Finally,  we estimate moments of    $\|u^N_t\|_{L^\infty(\mu)}$.

\beg{lem}\label{L2'}   Assume {\bf (A)}, {\bf (F)} and {\bf (W)} with $\aa_{m,d}:=\ff{md}{2m+1}\le 1$.
\beg{enumerate}
\item[$(1)$] Let $d\in (0,2)$ and $\eqref{Q}$  hold  for some  $\aa\in \big[\aa_{m,d},1\big]\cap\big(\ff{d-2}2,1\big]$.  Then for any $p\ge 2$ and $\beta\in (\frac d2,1]$, there exists a constant $c>0$ such that
\beq\label{I1} \sup_{t\ge 0, N\in \bar\N}\|u_t^N\|_{L^p(\OO; L^\infty(\mu))}\le c\|u_0\|_{\dot\H^\beta} + c\vv^{-\ff{\beta} 2}(1+\|u_0\|_{\H}).\end{equation}
 \item[$(2)$]   Let $d\in [2,2+m^{-1}]\cap [2, 4)$  and   $\eqref{Q}$ hold  for some  $\aa\in \big[\aa_{m,d},1\big]\cap\big(\ff{d-2}2,1\big]$.   Then for any $p\ge 2$ and any $\bb\in (\ff d 2,2]\cap [\alpha,1+\alpha]$, there exists a constant $c>0$ such that
\beq\label{I2} \beg{split} \sup_{t\ge 0, N\in \bar\N}\|u_t^N\|_{L^p(\OO; L^\infty(\mu))}
 \le c\|u_0\|_{\dot\H^\bb} +c \vv^{-1} \|u_0\|_{\dot\H^{\aa_{m,d}}}^{2m+1}+c    \vv^{- \ff{md+2}2}(1+\|u_0\|_\H^{2m+1}).\end{split}\end{equation}
\item[$(3)$]
Let $d\in [2,2+m^{-1}]\cap [2, 4).$       If  there exists $q>d$ such that
\beq\label{NC} \sup_{i\ge 1}\|e_i\|_{L^q(\mu)}<\infty,
\end{equation} then for any $p\ge 2$ and any $\beta\in (\frac d2,2)$, there exists a constant  $c>0$ such that
$\eqref{I2}$ holds.
\end{enumerate}
\end{lem}

\beg{proof}
(a) By \cite[Theorem 3.3.15]{Wbook}, together with $\ll_1>0$, \eqref{NS} implies that for some constant $c\ge 1$,
$$\|\e^{-At} \|_{L^1(\mu)\to L^\infty(\mu)}\le c t^{-\ff d 2}\e^{-\ll_1 t/2},\ \ t>0,$$
which together with the contraction of $\e^{-At}$ in $L^p(\mu)$ for all $p\ge 1$ and the Riesz-Thorin interpolation theorem, leads to
\beq\label{Con-est}\|\e^{-At}\|_{L^p(\mu)\to L^q(\mu)}\le c t^{-\ff {d(q-p)} {2pq} } \e^{-\ff{\ll_1t}2},\ \ t>0,\ 1\le p\le q\le \infty.\end{equation}
Noting that  $A^{-\beta} =\ff 1 {\Gamma(\beta)} \int_0^\infty s^{\beta-1} \e^{-s A}\d s$ for $\beta>0$, this  implies
\beq\label{Sob1} \beg{split}& \|u\|_{L^\infty(\mu)} = \|A^{-\ff \beta 2} A^{\ff\beta 2 } u\|_{L^\infty(\mu)}\le \ff {c}{\Gamma(\aa/2)} \int_0^\infty s^{\ff\beta 2 -1} s^{-\ff d 4} \e^{-\ll_1 s/2}  \|A^{\ff\beta 2 } u\|_\H\d s\\
&= \ff {c}{\Gamma(\beta/2)} \int_0^\infty s^{\ff\bb 2 -1} s^{-\ff d 4}\e^{-\ll_1 s/2} \| u\|_{\dot\H^\beta}  \d s =:c(\beta)\| u\|_{\dot\H^\beta}<\infty,\ \ \beta>\ff d 2,\ u\in \dot\H^\aa.\end{split}\end{equation}
Combining this with Lemma \ref{L2} (2)-(3) we derive \eqref{I1} for some constant $c>0.$

(b)
Since
$$\lim_{q\downarrow d} \lim_{\theta\downarrow \ff{d}{2q}} \Big(2\theta +\ff{d(q-2)}{2q}-1\Big)=\ff{d-2}{2}<\aa,$$
there exist constants $q,\theta$ such that
\beq\label{XDD} q>d,\ \ \ \ \ff{d}{2q}<\theta<\ff 1 2,\ \  \ \ 2\theta +\ff{d(q-2)}{2q}-1\le \aa.\end{equation}
So, \eqref{Q} implies
\beq\label{BB'} \sum_{i=1}^\infty q_i \ll_i^{\ff{d(q-2)}{2q} +2\theta-1}<\infty.\end{equation}
Let
\beq\label{Y1}  Y_{\theta}(t)=\int_0^{t}(t-s)^{-\theta} \pi_N  \e^{-A(t-s)} \d W_s,\ \ t\ge 0.\end{equation}
By \eqref{Con-est}, we find a constant $c_1>0$ such that
$$\|e_i\|_{L^q(\mu)}=\inf_{r>0} \e^{r\ll_i}\|\e^{-Ar}  e_i\|_{L^q(\mu)}\le c \inf_{r>0} \big[\e^{r\ll_i} r^{-\ff {d(q-2)}{4q}}\big]\le c_1 \ll_i^{\ff{d(q-2)}{4q}}.$$
Combining this with \eqref{BB'},  \eqref{Y1},    Burkholder inequality (see e.g.  \cite[Theorem 1.2]{Vanzhu11} since $L^q(\mu),q\ge 2$, is 2-smooth and type 2 Banach space),   we find constants $c_1, c_2 >0$ such that
\beq\label{BB''} \beg{split} &\|Y_{\theta}(t)\|_{L^p(\OO;L^q(\mu))}=\bigg\|\sum_{i=1}^N   \bigg(\int_0^t (t-s)^{-\theta} \e^{-\ll_i(t-s)}q_i^{\ff 1 2}\d W_i(s)\bigg)e_i \bigg\|_{L^p(\OO;L^q(\mu))}\\
& \le c_1 \bigg(\sum_{i=1}^N q_i \|e_i\|_{L^q(\mu)}^2 \int_0^t(t-s)^{-2\theta} \e^{-2\ll_i(t-s)}  \d s\bigg)^{\ff 1 2}\\
&\le   c_2 \bigg(\sum_{i=1}^\infty q_i \ll_i^{\ff{d(q-2)}{2q} +2\theta-1}  \Big)^{\ff 1 2}
 <\infty.\end{split}\end{equation}
 By the stochastic Fubini theorem \cite{Leon90}, we have
$$
Z^N(t)=\frac {\sin (\theta \pi) }{\pi} \int_0^{t}(t-s)^{\theta-1}  \e^{-A(t-s)} Y_{\theta}(s)\d s,\ \ t\ge 0.
$$
So, by \eqref{Con-est}, $\theta>\ff d{2q}$ in \eqref{XDD} and \eqref{BB''}, we find   constants $c_3,c_4>0$ such that for any $t\ge 0$ and $N\in\bar\N$,
$$
  \|Z^N (t)\|_{L^p(\Omega;   L^{\infty}(\mu)) }
 \le c_3 \int_0^t (t-s)^{\theta-1-\ff{d}{2q}} \e^{-\ll_1(t-s)/2}  \|Y_{\theta}(s)\|_{L^p(\Omega;L^{q}(\mu))}\d s \le c_4.
$$
Noting that $\e^{-At}$ is contractive in $L^\infty(\mu)$, $\|u_0^N\|_{\dot\H^\beta}\le \|u_0\|_{\dot\H^\beta}$ for $\beta\ge 0$, and $d<4$,
 by   combining this with \eqref{Z1}, \eqref{Con-est}, \eqref{F1},    \eqref{PB},  and  \eqref{*2'},   we find   constants $c_5,c_6>0$ such that
  \beg{align*} &\|u_t^N\|_{L^p(\OO; L^\infty(\mu))}\le \|  u_0^N\|_{L^\infty(\mu)}
  + \ff{\kk_1}\vv \int_0^t (t-s)^{-\ff {d} 4} \e^{-\ff{\ll_1}2 (t-s) } \|1+(u_s^N)^{2m+1}\|_{L^p(\OO;\H)}\d s+c_4\\
 & \le \|  u_0^N\|_{L^\infty(\mu)}
  + \ff{c_5}\vv \int_0^t (t-s)^{-\ff {d} 4} \e^{-\ff{\ll_1}2(t-s)} \big[\|u_0\|_{\dot \H_0^{{\aa_{m,d}}}}+\vv^{-\ff{\aa_{m,d}} 2}(1+\|u_0\|_\H)\big]^{2m+1}\d s\\
 &\le c_6 \|  u_0\|_{\dot \H^{\beta}}+ c_6 \vv^{-1}     \|u_0\|_{\dot\H^{{\aa_{m,d}}}}^{1+2m} +c_6  \vv^{-1-\ff{md}{2}} (1+\|u_0\|_\H^{1+2m}).
  \end{align*} So, \eqref{I2} in (2) holds.

 (c)
  Thanks to \eqref{NC}, by $2\theta<1$ we can modify \eqref{BB''}    as
$$\|Y_{\theta}(t)\|_{L^p(\OO;L^\infty(\mu))}\le c_1 \bigg(\sum_{i=1}^N q_i \|e_i\|_{L^q(\mu)}^2 \int_0^t(t-s)^{-2\theta} \e^{-2\ll_i(t-s)}  \d s\bigg)^{\ff 1 2}
\le c_2 \bigg(\sum_{i=1}^N q_i  \bigg)^{\ff 1 2} <\infty.$$  Then  \eqref{I2} in (3) still holds according to the proof in the last step (b).

 \end{proof}

Note that the condition \eqref{NC} with $q=\infty$ holds for eigenfunctions of the Laplacian on the torus  $\mathbb T^d$.

\subsection{Estimates on $u_t^{N,\tau}$}

Let $N\in \mathbb N$. Since for any $\sigma>0$, $\|\cdot\|_{\dot\H^\sigma}$ and  $\|\cdot\|_\H$ are equivalent on $\H_N$,  $u_t^{N,\tau}$ defined in \eqref{TF} is a continuous process on
  $\dot\H^\sigma$ so that   $\P$-a.s.
  $$T_{R,\sigma}:= \inf\big\{t\ge 0: \|u_t^{N,\tau}\|_{\dot\H^\sigma}\ge R\big\}\to\infty\ \text{as}\ R\to\infty.$$
  By \eqref{F1} and the definition of $\Theta_{\tau,\sigma}$ in \eqref{def-thetat}, we have
 \beg{align} \label{TA} & |\Theta_{\tau,\sigma}(u)|\le \kk_1 \vv^{-1} (1+|u|^{2m+1}),\\
     \label{TA'} &|\Theta_{\tau,\sigma}(u) - \Theta_0(u)|\le \kk_1  \vv^{-1}\big(1+|u|^{2m+1}\big)  \tau\|u\|_{\dot \H^{\sigma}}^{2m+1}.\end{align}
  
 \beg{lem}\label{LN0} Assume  {\bf (A)}, {\bf (F)} and {\bf (W)} with $\aa_{m,d}:=\ff{md}{2m+1}\le 1,$ and let  $\eqref{Q}$ hold for some  $\aa\in [\aa_{m,d},1].$
 Then the following assertions hold.
 \beg{enumerate}
  \item[$(1)$]  For any   $p\in [2,\infty)$, any $\sigma\in (0,2)$  and   $q\in [\frac 12,\ff d{(d-2\sigma)^+}) $ so that
  \begin{align}\label{def-gamma}
  \dd_{d,q,\sigma}:= \ff{d+ (2\sigma-d)q}{4q}\in (0,1),\ \ \ \ \gg_{d,q,\sigma}:=\ff{d+(4-d)q}{4q}\in (0,1],
  \end{align}
   there exists a constant $c>0$ such that for any $R\in [1,\infty)$,
  \beq\label{LN0-11}  \|1_{\{T_{R,\sigma}\ge t\}}(u_t^{N,\tau}- u_{\tau_{k(t)}}^{N,\tau})\|_{L^p(\OO; L^{2q}(\mu))}  \le c\Big(\tau^{\dd_{d,q,\sigma}} R +\tau^{\gg_{d,q,\sigma}}  \vv^{-1} R^{2m+1}+ \ss\tau\Big).\end{equation}
  In particular,  for $q=1$ we have $\dd_{d,q,\sigma}=\ff \sigma 2$ and $\gg_{d,q,\sigma}=1$ so that
   \beq\label{LN0-1}  \|1_{\{T_{R,\sigma}\ge t\}}(u_t^{N,\tau}- u_{\tau_{k(t)}}^{N,\tau})\|_{L^p(\OO; L^{2}(\mu))}  \le c\Big(\tau^{\ff \sigma 2} R +\tau   \vv^{-1} R^{2m+1}+ \ss\tau\Big).\end{equation}
   \item[$(2)$]
For any $p\in [2,\infty)$ and any $\si\in [\aa_{m,d},2)$,  there exists a constant $c>0$ such that for any $R\in [1,\infty)$,
 \beq\label{LN0-2} \beg{split}  &\|1_{\{T_{R,\sigma}\ge t\}}u_t^{N,\tau}\|_{L^p(\OO; \H)}
    \le   \|u_0\|_\H\\
    &+  c \Big(1+ \tau^{\ff\sigma 4}R^{m+1}  +\tau^{\frac 12}\vv^{-\ff 12}R^{2m+1}+  \tau^{\frac 12} R^{\frac {4m+3}2} \Big)^{\ff{1+2m/p}{m+1}}.\end{split}\end{equation}
 \item[$(3)$]
When $\aa_{m,d}<1  $ and
   $\sigma\in [\alpha,1],$  for any   $q_{d,\sigma} $ in \eqref{QD} and  any  $ p\in [2,\infty), $ there exists a   constant
$c>0$ such that
 \beq\label{LN0-3} \beg{split} &\|1_{\{T_{R,\sigma}\ge t\}}u_t^{N,\tau}\|_{L^p(\OO; \dot\H^\sigma)}\le \|u_0\|_{\dot\H^\sigma}\\
 &+c\Big(1+\vv^{-\frac 12}\tau^{\dd_{d,q_{d,\sigma},\sigma}} R^{2m+1} +\tau^{\gg_{d,q_{d,\sigma},\sigma}}  \vv^{-\frac 32} R^{4m+1}+ \vv^{-\frac 12} \ss\tau R^{2m} +\tau \vv^{-\frac 12}  R^{4m+2}\Big)\\
&+c\vv^{-\frac {\sigma}2}\Big( \|u_0\|_\H+   \Big(1+ \tau^{\ff\sigma 4}R^{m+1}  +\tau^{\frac 12}\vv^{-\frac 12}R^{2m+1}+ \tau^{\frac 12}  R^{\frac {4m+3} 2}\Big)^{\ff{1+2m/p}{m+1}}\Big).
   \end{split}\end{equation}
   Consequently, there exists a constant $c>0$ such that
   \beq\label{X1} \|1_{\{T_{R,\sigma}\ge t\}}u_t^{N,\tau}\|_{L^p(\OO; \dot\H^\sigma)}
  \le   c \vv^{-\ff \sigma 2}  (1+\|u_0\|_{\dot\H^\sigma})
    \end{equation} holds  for $\gg_2$ in $\eqref{DD}$ and
   \beq\label{TT0} R:= \vv^{\frac 3{2(4m+2)}}\tau^{-\frac {1}{(4m+2)\gg_2}},\ \ \ \tau\le \vv^{  \ff{3\gg_2}2}.\end{equation}
    \item[$(4)$]
 When $\aa_{m,d}=1$ and $ \sigma\in (1,2),$
    for any  $ p\in [2,\infty), $  there exists $c>0$ such that
 \beq\label{LN0-4} \beg{split}
 &\|1_{\{T_{R,\sigma}\ge t\}}u_t^{N,\tau}\|_{L^p(\OO; \dot\H^{\sigma})}\le c(1+\|u_0\|_{\dot\H^{\sigma}})+c\vv^{-\frac { 2m+3}2} (1+\|u_0\|_{\dot\H^{1}})^{2m+1}
   \end{split}\end{equation}holds for  $\gg_3$   in $\eqref{DD}$   and
      \beq\label{TT1} R:= \vv^{\frac 3{2(4m+2)}}\tau^{-\frac {1}{(4m+2)\gg_3}},\ \ \ \tau\le \vv^{  \ff{3\gg_3}2}.\end{equation}
  \end{enumerate}\end{lem}

 \beg{proof} (a)  By \eqref{TF}, we have
 \beq\label{M1} \beg{split} &u_t^{N,\tau}-u_{\tau_{k(t)}}^{N,\tau}= I_1+I_2+I_3,\\
 &I_1:=\big(\e^{-A(t-\tau_{k(t)})}-1\big) u_{\tau_{k(t)}}^{N,\tau}, \\
&I_2:=  \int_{\tau_{k(t)}}^t \pi_N \e^{-A(t-s)} \Theta_{\tau,\sigma} (u_{\tau_{k(s)}}^{N,\tau}) \d s,\\
&I_3:=  \int_{\tau_{k(t)}}^t \pi_N \e^{-A(t-s)}\d W_s. \end{split}
 \end{equation}
Since    $q\in [\frac 12,\ff d{(d-2\sigma)^+})$, we have
\beq\label{M2}  \bb:= \ff{d{(q-1)^+}} {2q}\le\sigma,\end{equation}
so that  \eqref{S} implies
 \beq\label{PO1}  \|u\|_{L^{2q}(\mu)}=\|u\|_{L^{\ff{2d}{d-2\bb}} (\mu)}\le c(\sigma) \|u\|_{\dot\H^\beta}  \le c(\sigma) \|u\|_{\dot\H^\sigma}.\end{equation}
  Combining this with the fact that
 $$|\e^{-s}-1|\le s^\theta,\ \ \theta\in [0,1],\ s\ge 0,$$
 we obtain
 \beq\label{M3}\beg{split}
 \|I_1\|_{L^{2q}(\mu)} &\le c(\sigma) (t-\tau_{k(t)})^{\ff{\sigma-\bb}2 } \big\| A^{\ff{\sigma-\bb}2} A^{\ff\bb 2} u_{\tau_{k(t)}}^{N,\tau}\big\|_{\H}  \\
 &\le c(\sigma) \tau^{\ff{\sigma-\bb}2} R= c(\sigma) \tau^{\dd_{d,q,\sigma}} R,\ \ \ \ t\le T_{R,\sigma}.\end{split}\end{equation}
 Next,  by \eqref{TA}, \eqref{BN0} and \eqref{PO1},
 we find   constants $c_1,c_2>0$ such that
 \beg{align*} \|I_2\|_{L^{2q}(\mu)}
 &\le c_1   \int_{\tau_{k(t)}}^t   (t-s)^{-\ff\bb 2} \e^{-\ff{\ll_1} 2(t-s)} \| \Theta_{\tau,\sigma} (u_{\tau_{k(s)}}^{N,\tau})\|_\H \d s \\
 &\le c_2 \vv^{-1} \int_{\tau_{k(t)}}^t  (t-s)^{-\ff\bb 2} \e^{-\ff{\ll_1} 2(t-s)}\big(1+\| u_{\tau_{k(s)}}^{N,\tau}\|_{L^{4m+2}(\mu)}^{2m+1}\big)  \d s.\end{align*}
 By \eqref{PB} and $\sigma\ge \aa_{m,d}$, we find a constant $c_3>0$ such that
 $$  \| u_{\tau_{k(s)}}^{N,\tau}\|_{L^{4m+2}(\mu)} \le c_3 \| u_{\tau_{k(s)}}^{N,\tau}\|_{\dot\H^\sigma} \le   c_3 R,\ \ t\le T_{R,\sigma}.$$
 Hence,  there exists a constant $c_4>0$ such that
 \beq\label{M4} \|I_2\|_{L^{2q}(\mu)}\le c_4   \tau^{1-\ff \bb 2} \vv^{-1} R^{2m+1}= c_4   \tau^{\gg_{d,q,\sigma}} \vv^{-1} R^{2m+1}.\end{equation}
 Finally, by $\bb\le \sigma$,  \eqref{Q} and \eqref{PO1}, we find constants $c_5,c_6>0$ such that
 \beg{align*} &\E \bigg[\bigg\|\int_{\tau_{k(t)}}^t \pi_N \e^{-A(t-s)} \d W_s\bigg\|_{L^{2q}(\mu)}^p\bigg] \le c(\bb)^p \E \bigg[\bigg\|\int_{\tau_{k(t)}}^t \e^{-A(t-s)} \d W_s\bigg\|_{\dot\H^\bb}^p\bigg]\\
 &\le c_5 \bigg(\int_{\tau_{k(t)}}^t \sum_{i=1}^\infty q_i\ll_i^{\bb  } \d s\bigg)^{\ff p 2}\le c_6 \tau^{\ff p 2}.\end{align*}
 Combining this with  \eqref{M3}-\eqref{M4},
 we derive  \eqref{LN0-11} for some constant $c>0$.

(b)  By It\^o's formula, for any $p\in [2,\infty)$ we find a constant $c_0>0$ and a martingale $M_t$ such that
\beq\label{U2}\beg{split}  &\d \|u_t^{N,\tau}\|_\H^p +\d M_t
\le p\|u_t^{N,\tau}\|_\H^{p-2}\big(c_0 + J_1+J_2+J_3\big)\d t,\\
&J_1:=-\vv^{-1} \<u_t^{N,\tau}, f(u_t^{N,\tau})\>_\H,\\
& J_2:=-\vv^{-1}\<u_t^{N,\tau}, f(u_{\tau_{k(t)}}^{N,\tau})- f(u_t^{N,\tau})\>_\H,\\
&J_3:= \<u_t^{N,\tau}, \Theta_{\tau,\sigma}(u_{\tau_{k(t)}}^{N,\tau})- \Theta_0(u_{\tau_{k(t)}}^{N,\tau})\>_\H- \|u_t^{N,\tau}\|_{\dot\H^1}^2.\end{split}
\end{equation}
By \eqref{F2}, we find a constant $c_1>0$ such that
$$J_1\le \vv^{-1}\big(\kk_1-  \kk_2 \|u_t^{N,\tau}\|_{L^{2m+2}(\mu)}^{2m+2}\big)\le c_1\vv^{-1} -c_1\vv^{-1} \|u_t^{N,\tau}\|_{\H}^{2m+2}.$$
By \eqref{F4}, the Schwarz inequality and  \eqref{PB}, we find   constants $c_2,c_3>0$ such that
\beg{align*}J_2&\le \vv^{-1} \kk_1\big\|u_t^{N,\tau}(u_t^{N,\tau}-u_{\tau_{k(t)}}^{N,\tau})
 (1+ |u_t^{N,\tau} |^{2m} +  |u_{\tau_{k(t)}}^{N,\tau}|^{2m})\big\|_{L^1(\mu)}\\
&\le c_2  \vv^{-1}  \|u_t^{N,\tau}-u_{\tau_{k(t)}}^{N,\tau}\|_{\H} \big(1+\|u_t^{N,\tau}\|_{L^{4m+2}(\mu)}^{2m+1}+ \|u_{\tau_{k(t)}}^{N,\tau}\|_{L^{4m+2}(\mu)}^{2m+1}\big)\\
&\le c_3   \vv^{-1}\|u_t^{N,\tau}-u_{\tau_{k(t)}}^{N,\tau}\|_{\H}  (1+R^{2m+1}),\ \ \  t\in [0,T_{R,\sigma}].
 \end{align*}
By \eqref{TA'}, \eqref{S} and $\aa_{m,d}<1$, we find a constant $c_4>0$ such that
  \beg{align*} \|\Theta_{\tau,\sigma}(u)-\Theta_0(u)\|_{\H}&\le \kk_1\vv^{-1} \big(1+\|u\|_{L^{4m+2}(\mu)}^{2m+1}\big) \tau\|u\|_{\dot\H^\si}^{2m+1} \\
 &\le c_4   \tau  \vv^{-1} (1+\|u\|_{\dot \H^{\sigma}}^{4m+2}). \end{align*}  Combining this with H\"older's inequality, we obtain 
\beg{align*} J_3 \le c_4 \tau \vv^{-1} \|u_t^{N,\tau}\|_{\H}  (1+\|u_t^{N,\tau}\|_{\dot \H^{\sigma}}^{4m+2}) -\|u_t^{N,\tau}\|_{\dot\H^1}^2.\end{align*}
By  the above estimates  on $J_1,J_2,J_3$,    \eqref{U2} and noting that for any constants $p_1>p_2\ge 0$ and $\dd>0$,
there exists a constant $c(p_1,p_2,\dd)>0$ such that
\beq\label{U3}a^{p_2}b\le \dd a^{p_1} + c(p_1,p_2,\dd) b^{\ff{p_1}{p_1-p_2}},\ \ a,b\ge 0,\end{equation}
we find   constants  $c_5,c_6>0$ such that  for $t\le T_{R,\sigma}$,
\beg{align*}&\d \|u_t^{N,\tau}\|_\H^p +\d M_t\\
&\le p\|u_t^{N,\tau}\|_\H^{p-2}
\vv^{-1}  \Big(c_5  - c_1  \|u_t^{N,\tau}\|_{\H}^{2m+2}+  c_3  \|u_t^{N,\tau}-u_{\tau_{k(t)}}^{N,\tau}\|_{L^{2}(\mu)}  R^{2m+1}\\
&\quad+c_4 \tau \|u_t^{N,\tau}\|_{\H}  (1+\|u_t^{N,\tau}\|_{\dot \H^{\sigma}}^{4m+2}) \Big)\d t \\
 &\le \vv^{-1} \Big(- c_1  \|u_t^{N,\tau}\|_{\H}^{p}
  + c_6 \big(1+\|u_t^{N,\tau}-u_{\tau_{k(t)}}^{N,\tau}\|_{L^{2}(\mu)}R^{2m+1}+   \tau R^{4m+3}\big)^{\ff {p+2m}{2m+2}}\Big)\d t.
 \end{align*}
 Hence, by \eqref{LN0-1} and $R\ge 1\ge\tau\lor\vv\lor\sigma$,
   we find some constants $c_7,c_8>0$ such that
 \beg{align*} &\E\big[1_{[0,T_{R,\sigma}]}(t) \|u_t^{N,\tau}\|_\H^p\big] &\\
 &\le  \|u_0\|_\H^p+     \ff{c_6}\vv  \int_0^t\e^{-\ff{t-s}{c_1 \vv}  }  \E\Big[1_{\{s\le T_{R,\sigma}\} } \big(1+\|u_s^{N,\tau}-u_{\tau_{k(s)}}^{N,\tau}\|_{L^{2}(\mu)}R^{2m+1}+ \tau R^{4m+3}\big)^{\ff {p+2m}{2m+2}} \Big]\d s\\
 &\le \|u_0\|_\H^p+c_7 \big(1+\tau^{\ff\sigma 2} R^{2m+2} +\tau\vv^{-1}R^{4m+2}+  \ss\tau\, R^{2m+1}+ \tau R^{4m+3}\big)^{\ff {p+2m}{2m+2}}\\
 &\le \|u_0\|_\H^p+c_8 \big(1+\tau^{\ff\sigma 2} R^{2m+2} +\tau\vv^{-1}R^{4m+2} + \tau R^{4m+3}\big)^{\ff {p+2m}{2m+2}}.\end{align*}
This implies  \eqref{LN0-2}   for some constant $c>0.$

(c) By \eqref{Q} and It\^o's formula, we find a constant $c_0>0$ and a martingale $M_t$  such that
\beq\label{*4} \beg{split} &\d\|u_t^{N,\tau}\|_{\dot\H^\sigma}^p+\d M_t
 \le  p \|u_t^{N,\tau}\|_{\dot\H^\sigma}^{p-2} \big(c_0- \|u_t^{N,\tau}\|_{\dot\H^{1+\sigma}}^2+K_1+K_2+K_3\big)\d t,\\
&K_1:=-\vv^{-1}\<u_t^{N,\tau}, f(u_t^{N,\tau})\>_{\dot\H^\sigma},\\
& K_2:=-\vv^{-1}\<u_t^{N,\tau}, f(u_{\tau_{k(t)}}^{N,\tau})- f(u_t^{N,\tau})\>_{\dot\H^\sigma},\\
&K_3:=-\<u_t^{N,\tau}, \Theta_{\tau,\sigma}(u_{\tau_{k(t)}}^{N,\tau})- \Theta_0(u_{\tau_{k(t)}}^{N,\tau})\>_{\dot \H^{\sigma}}.
\end{split}
\end{equation}
By \eqref{F3}  and $\sigma \in (0,1]$, we obtain
\beq\label{K1}K_1\le -\kk_1  \vv^{-1} \|u_t^{N,\tau}\|_{\dot\H^\sigma}^2.\end{equation}
Let
$$p_0:=1\lor \frac {2d}{d+2(1-\sigma)}\le 2,\ \ \  p_1=\frac {2d}{2m(d-2\sigma)^+}.$$
By  $q_{d,\si}\in[\hat q,\ff d{(d-2\si)^+})$ in \eqref{QD}, we have
\begin{align}\label{hold-cond}
\frac 1{p_0}\ge \frac 1{p_1}+\frac 1{2q_{d,\sigma}},\ \ \ff{p_0}{p_0-1}\le \ff{2d}{(d-2(1-\si))^+}.
\end{align}
By  \eqref{S}, we find a constant $c_1>0$ such that
 \beq\label{*QP} \|u\|_{L^{2mp_1}(\mu)}\le c(\sigma) \|u\|_{\dot\H^\sigma},\ \ \|u\|_{L^{\ff{p_0}{p_0-1}}(\mu)}\le c_1\|u\|_{\dot\H^{1-\sigma}},\end{equation}  where the second inequality and H\"older's inequality imply
 \beq\label{*PQ} \|u\|_{\dot\H^{\sigma-1}}=\sup_{\|v\|_\H\le 1} \<u,A^{\sigma-1}v\>_\H\le  \|u\|_{L^{p_0}(\mu)} \sup_{\|v\|_\H\le 1} \|A^{\sigma-1}v\|_{L^{\ff{p_0}{p_0-1}}(\mu)}\le c_1\|u\|_{L^{p_0}(\mu)}.\end{equation}
 On the other hand,  in \eqref{def-gamma} with $q=q_{d,\sigma}$  we  simply denote
 $$\dd_{d,\sigma}=\dd_{d,q_{d,\sigma},\sigma}:= \ff{d+(2\sigma-d)q_{d,\sigma}}{4 q_{d,\sigma}},\ \ \
\gg_{d,\sigma}:= \ff{d+(4-d)q_{d,\sigma}}{4 q_{d,\sigma}}.$$
 By \eqref{LN0-11} with $2q=2q_{d,\sigma}<\frac {2d}{(d-2\sigma)^+},$ we obtain
\begin{align}\label{time-con-est}
\|u_t^{N,\tau}-u_{\tau_{k(t)}}^{N,\tau}\|_{L^{2q_{d,\sigma}}(\mu)}\le c\Big(\tau^{\dd_{d,\sigma}} R +\tau^{\gg_{d,\sigma} } \vv^{-1} R^{2m+1}+ \ss\tau\Big).
\end{align}
Combining this with \eqref{F4}, \eqref{*QP}, \eqref{*PQ} and   H\"older's inequality due to \eqref{hold-cond},
we find constants $c_2,c_3>0$ such that
\beg{align*} K_2&\le \vv^{-1}\|u_t^{N,\tau}\|_{\dot\H^{\sigma+1}}\|f(u_t^{N,\tau})-f(u_{\tau_{k(t)}}^{N,\tau})\|_{\dot\H^{\sigma-1}}\\
&\le \vv^{-1}c_1\kk_1\|u_t^{N,\tau}\|_{\dot\H^{\sigma+1}}\big\|(u_t^{N,\tau}-u_{\tau_{k(t)}}^{N,\tau})
(1+|u_t^{N,\tau}|^{2m} +|u_{\tau_{k(t)}}^{N,\tau}|^{2m}\big)\big\|_{L^{p_0}(\mu)}\\
&\le c_2 \vv^{-1} \|u_t^{N,\tau}\|_{\dot\H^{\sigma+1}}\|u_t^{N,\tau}-u_{\tau_{k(t)}}^{N,\tau}\|_{L^{2q_{d,\sigma}}(\mu)}
\Big(1+\|u_t^{N,\tau}\|_{L^{2mp_1}(\mu)}^{2m}
+\Big\|u_{\tau_{k(t)}}^{N,\tau}\Big\|_{L^{2mp_1}(\mu)}^{2m}\Big)\\
&\le c_3\vv^{-1}\|u_t^{N,\tau}\|_{\dot\H^{\sigma+1}}\Big(\tau^{\dd_{d,\sigma}} R^{2m+1} +\tau^{\gg_{d,\sigma}}  \vv^{-1} R^{4m+1}+ \ss\tau R^{2m}\Big) ,
\ \ t\le T_{R,\sigma}.\end{align*}
Moreover, by \eqref{TA'}, \eqref{*PQ}, $p_0=1\lor\ff{2d}{d+2(1-\sigma)}$ and \eqref{S},
   we find constants $c_4,c_5>0$ such that
\begin{align*}
K_3 &\le \|u_t^{N,\tau}\|_{\dot \H^{\sigma+1}} \|\Theta_{\tau,\sigma}(u_{\tau_{k(t)}}^{N,\tau})- \Theta_0(u_{\tau_{k(t)}}^{N,\tau})\|_{\dot \H^{\sigma-1}}\\
&\le  c_4   \vv^{-1} \|u_t^{N,\tau}\|_{\dot \H^{\sigma+1}}
\tau \Big(1+\Big\|u_{\tau_{k(t)}}^{N,\tau}\Big\|^{2m+1}_{L^{(2m+1) p_0}(\mu)}\Big)\Big\|u_{\tau_{k(t)}}^{N,\tau}\Big\|_{\dot \H^{\sigma}}^{2m+1}\\
&\le c_5 \tau \vv^{-1} \|u_t^{N,\tau}\|_{\dot \H^{\sigma+1}} R^{4m+2},
\ \ t\le T_{R,\sigma}.
\end{align*}
This together with   \eqref{K1} and the estimates of $K_2,K_3$, yields
\beq\label{OI}\beg{split} &c_0- \|u_t^{N,\tau}\|_{\dot\H^{1+\sigma}}^2+K_1+K_2+K_3\\
  &\le   c_0- \|u_t^{N,\tau}\|_{\dot\H^{1+\sigma}}^2-
 \kk_1  \vv^{-1} \|u_t^{N,\tau}\|_{\dot\H^\sigma}^2\\
&+ c_3\vv^{-1}\|u_t^{N,\tau}\|_{\dot\H^{\sigma+1}}\Big(\tau^{\dd_{d,\sigma}} R^{2m+1}
+\tau^{\gg_{d,\sigma}}  \vv^{-1} R^{4m+1}+ \ss\tau R^{2m}\Big) \\
&+c_5 \tau \vv^{-1} \|u_t^{N,\tau}\|_{\dot \H^{\sigma+1}} R^{4m+2},
\ \ t\le T_{R,\sigma}.
 \end{split}\end{equation}
 Noting that
$$\|u\|_{\dot\H^{\sigma}}\le \|u\|_{\dot\H^{\sigma+1}}^{\ff{\sigma}{\sigma+1}}\|u\|_\H^{\ff{1}{\sigma+1}}, $$
we find a constant $c_6>0$ such that
\beg{align*} &   c_0- \|u_t^{N,\tau}\|_{\dot\H^{1+\sigma}}^2+K_1+K_2+K_3 \\
&\le c_0- \frac 12 \|u_t^{N,\tau}\|_{\dot\H^{1+\sigma}}^2-
 \kk_1  \vv^{-1} \|u_t^{N,\tau}\|_{\dot\H^\sigma}^2\\
&+ 2c_3\Big(\vv^{-1} \tau^{\dd_{d,\sigma}} R^{2m+1} +\tau^{\gg_{d,\sigma}}  \vv^{-2} R^{4m+1}+ \vv^{-1}\ss\tau R^{2m}\Big)^2 +2c_5 \big(\tau \vv^{-1}  R^{4m+2}\big)^2\\
&\le c_0-\frac 12 \kk_1  \vv^{-1} \|u_t^{N,\tau}\|_{\dot\H^\sigma}^2+
2c_3\Big(\vv^{-1}\tau^{\dd_{d,\sigma}} R^{2m+1} +\tau^{\gg_{d,\sigma}}  \vv^{-2} R^{4m+1}+\vv^{-1} \ss\tau R^{2m}\Big)^2\\
&+2c_5\big(\tau \vv^{-1}  R^{4m+2}\big)^2+c_6\vv^{-1-\sigma}\|u_t^{N,\tau}\|_{\H}^2,\ \ t\le T_{R,\sigma}.
\end{align*}
Combining this with \eqref{*4},   \eqref{OI} and \eqref{U3}, we find a constant $c_7>0$ such that
\beg{align*} &\d\|u_t^{N,\tau}\|_{\dot\H^\sigma}^p+\d M_t\\
&\le -\frac 12 \kk_1  \vv^{-1}  \|u_t^{N,\tau}\|_{\dot\H^{\sigma}}^p\d t +c_7 \vv^{-1} \big(1+\vv^{-\sigma}\|u_t^{N,\tau}\|_\H^2 +  \vv \big(\tau \vv^{-1}  R^{4m+2}\big)^2 \big)^{\frac p2}\d t \\
  &+c_7\vv^{-1}\Big(\vv^{-\frac 12}\tau^{\dd_{d,\sigma}} R^{2m+1} +\tau^{\gg_{d,\sigma}}  \vv^{-\frac 32} R^{4m+1}+\vv^{-\frac 12} \ss\tau R^{2m}\Big)^p \d t.\end{align*}
By this, and
 $R\ge 1\ge\tau\lor\sigma$,  we find constants $c_8,c_9>0$ such that
\beg{align*} &\E\Big[1_{[0,T_{R,\sigma}]}(t)  \|u_t^{N,\tau}\|_{\dot\H^\sigma}^p\Big]
 \le  \|u_0\|_{\dot\H^\sigma}^p\\
 &+ c_8\vv^{-1}\int_0^t\e^{-\frac 12 \kk_1  \vv^{-1} (t-s)}\E\Big[1_{[0,T_{R,\sigma}]}(s) \big(1+\vv^{-\sigma}\|u_s^{N,\tau}\|_\H^2 + \vv^{-1}  \big(\tau R^{4m+2}\big)^2 \big) ^{\ff p 2}\Big]\d s\\
&+c_8\vv^{-1}\int_0^t\e^{-\frac 12 \kk_1  \vv^{-1} (t-s)} \Big[\vv^{-1}\Big(\tau^{\dd_{d,\sigma}} R^{2m+1} +\tau^{\gg_{d,\sigma}}  \vv^{-1} R^{4m+1}+ \ss\tau R^{2m}\Big)^2\Big]^{\frac p2}\d s\\
  &\le c_9\Big(1+ \vv^{-\frac 12}\tau^{\dd_{d,\sigma}} R^{2m+1} +\tau^{\gg_{d,\sigma}}  \vv^{-\frac 32} R^{4m+1}+ \vv^{-\frac 12}\ss\tau R^{2m} +\tau \vv^{-\frac 12}  R^{4m+2}\Big)^p\\
  &+c_9 \vv^{-\sigma \frac p2-1} \int_0^t \e^{-\frac 12 \kk_1  \vv^{-1} (t-s)} \E[1_{[0,T_{R,\sigma}]}(s)\|u_s^{N,\tau}\|_\H^p]\d s+\|u_0\|_{\dot\H^\sigma}^p.\end{align*}
This, together with \eqref{LN0-2}, implies that for some $c>0,$
\beg{align*}
&\Big\|1_{[0,T_{R,\sigma}]}(t)  \|u_t^{N,\tau}\|_{\dot\H^\sigma}\Big\|_{L^p(\Omega)}\\
&\le  \|u_0\|_{\dot\H^\sigma}+c\Big(1+\vv^{-\frac 12}\tau^{\dd_{d,\sigma}} R^{2m+1} +\tau^{\gg_{d,\sigma}}  \vv^{-\frac 32} R^{4m+1}+ \vv^{-\frac 12} \ss\tau R^{2m} +\tau \vv^{-\frac 12}  R^{4m+2}\Big)\\
&+c\vv^{-\frac {\sigma}2}\Big( \|u_0\|_\H+   \Big(1+ \tau^{\ff\sigma 4}R^{m+1}  +\tau^{\frac 12}\vv^{-\frac 12}R^{2m+1}+ \tau^{\frac 12}  R^{\frac {4m+3} 2}\Big)^{\ff{1+2m/p}{m+1}}\Big).
\end{align*}
Thus, \eqref{LN0-3} holds for some constant $c>0.$

Finally, if \eqref{TT0} holds, then there exists a constant $c'>0$ such that
\begin{align*}
&\vv^{-\frac 12}\tau^{\dd_{d,\sigma}} R^{2m+1} +\tau^{\gg_{d,\sigma}}  \vv^{-\frac 32} R^{4m+1}+ \vv^{-\frac 12} \ss\tau R^{2m} +\tau \vv^{-\frac 12}  R^{4m+2}\\
&+ \tau^{\ff\sigma 4}R^{m+1}  +\tau^{\frac 12}\vv^{-\frac 12}R^{2m+1}+ \tau^{\frac 12}  R^{\frac {4m+3}2} \le c'.
\end{align*}
Combining this with  \eqref{LN0-3}, we derive  \eqref{X1} for some constant $c>0$.

 (d) The condition $\alpha_{m,d}=1$ implies that $d>2.$
Repeating the steps in (c) with $\sigma$ being replaced by  $1$, one has that
\eqref{hold-cond} holds with $p_0=2, p_1=d, 2q_{d,1}:=\frac {2d}{d-2}.$
Since $\sigma>\alpha_{m,d}=1,$ by \eqref{LN0-11} with $2q=\frac {2d}{d-2}<\frac {2d}{(d-2\sigma)^+},$ we obtain
\begin{align}\label{time-con-est}
\|u_t^{N,\tau}-u_{\tau_{k(t)}}^{N,\tau}\|_{L^{2q_{d,1}}(\mu)}\le c\Big(\tau^{\dd_{d,q_{d,1},\sigma}} R +\tau^{\gg_{d,q_{d,1},\sigma} } \vv^{-1} R^{2m+1}+ \ss\tau\Big).
\end{align}
Then the estimations of $K_1,K_2,K_3$ in \eqref{*4} follow similar to those in (c). Thus, there exists a constant   $c_1>0 $ such that
   \beg{equation}\begin{split}\label{H1-pri} &\|1_{\{T_{R,\sigma}\ge t\}}u_t^{N,\tau}\|_{L^p(\OO; \dot\H^{1})}\le \|u_0\|_{\dot\H^{1}}\\
 &+c_1\Big(1+\vv^{-\frac 12}\tau^{\dd_{d,q_{d,1},\sigma}} R^{2m+1} +\tau^{\gg_{d,q_{d,1},\sigma}}  \vv^{-\frac 32} R^{4m+1}+ \vv^{-\frac 12} \ss\tau R^{2m} +\tau \vv^{-\frac 12}  R^{4m+2}\Big)\\
&+c\vv^{-\frac {1}2}\Big( \|u_0\|_\H+   \Big(1+ \tau^{\ff {1} 4}R^{m+1}  +\tau^{\frac 12}\vv^{-\frac 12}R^{2m+1}+ \tau^{\frac 12}  R^{\frac {4m+3} 2}\Big)^{\ff{1+2m/p}{m+1}}\Big).
  \end{split} \end{equation}
By \eqref{Z1'},  \eqref{ES1}, \eqref{PB} and \eqref{BN0},  we find constants $c_2,c_3>0$ such that
   \beg{align*} &\|1_{\{T_{R,\sigma}\ge t\}}u_t^{N,\tau}\|_{L^p(\OO; \dot\H^{\sigma})}\\
 &\le \|u_0\|_{\dot\H^{\sigma}}+c_2+c_2 \vv^{-1} \int_0^t  (t-s)^{-\ff \sigma 2}\e^{-\ff{\ll_1}2(t-s)} \Big(1+ \|u_{\tau_{k(s)}}^{N,\tau}\|_{L^{p}(\OO; L^{4m+2}(\mu))}^{2m+1}\Big) \d s
 \\
 &\le  \|u_0\|_{\dot\H^{\sigma}}+c_2+c_3\vv^{-1} \int_0^t  (t-s)^{-\ff \sigma 2}\e^{-\ff{\ll_1}2(t-s)} \Big(1+ \|u_{\tau_{k(s)}}^{N,\tau}\|_{L^{p}(\OO; \dot \H^{1})}^{2m+1}\Big) \d s.
   \end{align*}
This, together with \eqref{TT1} and \eqref{H1-pri},  yields that
  for some $c_4>0$
\begin{align*}
&\|1_{\{T_{R,\sigma}\ge t\}}u_t^{N,\tau}\|_{L^p(\OO; \dot\H^{\sigma})}\le c_4(1+\|u_0\|_{\dot\H^{\sigma}})+c_4\vv^{-\frac { (2m+3)}2} (1+\|u_0\|_{\dot\H^{1}})^{2m+1}.
\end{align*}
The proof of \eqref{LN0-4} is  complete.
\end{proof}

By Lemma \ref{LN0}(3)-(4) and a bootstrap argument, we have  the following results.

 \beg{lem}\label{LN1} Assume {\bf (A)}, {\bf (F)} and {\bf (W)} with $\aa_{m,d}:=\ff{md}{2m+1}<1,$ let $\eqref{Q}$ hold for some   $\aa\in [\aa_{m,d},1],$ and let $\sigma\in [\alpha_{m,d},1]$ and  $\gg_2$ be in $\eqref{DD}$.
 \beg{enumerate}  \item[$(1)$]  There exists a constant $c>0$   such that for $\tau$ satisfying $\eqref{TT}$ and $p= 6(2m+1)\gg_3$,
 \beq\label{LN1-1} \|u_t^{N,\tau}\|_{L^{p}(\OO;\dot \H^{\sigma})}\le  c(1+t) \vv^{-1} (1+\|u_0\|_{\dot\H^{\sigma}}).\end{equation}
 \item[$(2)$]   If $\beta \in (\sigma, 2)\cap  [\alpha,1+\alpha]$, then  there exists a constant $c>0$  such that for $\tau$ satisfying $\eqref{TT}$,
   \beq\label{LN1-2} \|u_t^{N,\tau}\|_{L^{6\gg_3}(\OO;\dot\H^\beta)}\le  c\vv^{-1}\big[(1+t)   \vv^{-1} (1+\|u_0\|_{\dot\H^{\beta}})\big]^{2m+1}.\end{equation}
 \end{enumerate}
 \end{lem}

 \beg{proof} (a)  Let
 $R= \vv^{\ff 3{2(4m+2)}} \tau^{-\ff {1}{(4m+2)\gg_2}} $. Then \eqref{X1} holds.
 Note that
 \beg{align*} & u_t^{N,\tau} = \pi_N u_0+ Z_t^N + \int_0^t \e^{-(t-s)A} \pi_N \Theta_{\tau,\sigma}(u_{\tau_{k(s)}}^{N,\tau})\d s,\\
 & 1_{\{t>T_{R,\sigma}\}}\le \sum_{i=0}^{k(t)} 1_{A_i},\ \ A_i:=\Big\{1_{\{i\tau\le T_{R,\sigma}\}} \sup_{s\in [i\tau,(i+1)\tau]}\|u_s^{N,\tau}\|_{\dot\H^{\sigma}}\ge R\Big\}.\end{align*}
 So, by \eqref{TA}, \eqref{Q}, \eqref{ES1} and \eqref{BN0},  we find a constant $c_1>0$ such that
 \beq\label{X2}\beg{split} & \|1_{\{t>T_{R,\aa} \}}u_t^{N,\tau}\|_{L^p(\OO; \dot\H^\sigma)}\le c_1 +\|u_0\|_{\dot\H^\sigma} +\int_0^t \big\|1_{\{t>T_{R,\sigma}\}} \e^{-A(t-s)} \Theta_{\tau,\sigma}(u_{\tau_{k(s)}}^{N,\tau})\big\|_{\dot\H^\sigma}\d s\\
 &\le c_1 +\|u_0\|_{\dot\H^\sigma} +(\tau\vv)^{-1} \int_0^t \e^{-\ff{\ll_1}2(t-s)} (t-s)^{-\ff\sigma 2} \P\big(t>T_{R,\sigma} \big)\d s\\
 &\le c_1 +\|u_0\|_{\dot\H^\sigma} +c_1 (\tau\vv)^{-1}  \sum_{i=0}^{k(t)}  \P(A_i)  \\
 &\le c_1 +\|u_0\|_{\dot\H^\sigma} +c_1 (\tau\vv)^{-1}  R^{-p} \sum_{i=0}^{k(t)} \E\Big[1_{\{i\tau\le T_{R,\sigma}\}} \sup_{s\in [i\tau,(i+1)\tau]}\|u_s^{N,\tau}\|_{\dot\H^\sigma }^p\Big].\end{split}\end{equation}
 By   \cite[Theorem 5.9]{Dap92}, \eqref{Q} implies that for some constant $c_2>0$,
 $$\bigg\|\sup_{s\in [i\tau,(i+1)\tau]} \bigg\|\int_{i\tau}^s \e^{-(s-r)A} \pi_N \d W_s \bigg\|_{\dot\H^\sigma}\bigg\|_{L^p(\OO)} \le c_2.$$
 Combining this with \eqref{X1}, \eqref{TA}, \eqref{ES1}, \eqref{BN0}, \eqref{PB} and $\sigma \ge \aa_{m,d},$ we find constants $c_3,c_4>0$ such that
 \beg{align*} &\bigg\|1_{\{i\tau\le T_{R,\sigma}\}} \sup_{s\in [i\tau,(i+1)\tau]} \|u_s^{N,\tau}\|_{\dot\H^\sigma}\bigg\|_{L^p(\OO)}
  \le c_2 + \big\|1_{\{i\tau\le T_{R,\sigma}\}}  u_{i\tau}^{N,\tau}\big\|_{L^p(\OO;\dot\H^\sigma)} \\
  &\quad + \bigg\| \sup_{s\in [i\tau,(i+1)\tau]} \int_{i\tau}^s1_{\{i\tau\le T_{R,\sigma}\}}\|\e^{-A(s-r)} \Theta_{\tau,\sigma}(u_{i\tau}^{N,\tau})\|_{\dot\H^\aa}\d r \bigg\|_{L^p(\OO)}\\
  &\le c_2 + c \vv^{-1} (1+\|u_0\|_{\dot\H^\sigma}) \\
  &\quad + \kk_1\vv^{-1} \bigg\| 1_{\{i\tau\le T_{R,\sigma}\}}\sup_{s\in [i\tau,(i+1)\tau]} \int_{i\tau}^s(s-r)^{-\ff \sigma 2} \e^{-\ff{\ll_1}2(s-r)} \Big(1+\|u_{i\tau}^{N,\tau}\|_{L^{4m+2}(\mu)}^{2m+1} \Big)  \d r \bigg\|_{L^p(\OO)}\\
  & \le  c_2 + c \vv^{-1} (1+\|u_0\|_{\dot\H^\sigma}) + c_3\vv^{-1} \Big(1+ \|1_{\{i\tau\le T_{R,\sigma}\}} u_{i\tau}^{N,\tau}\|_{L^{(2m+1)p}(\OO;\dot\H^\sigma)}^{2m+1}\Big)\\
  &\le    c_4 \vv^{-2m-2} (1+\|u_0\|_{\dot\H^\sigma})^{2m+1}. \end{align*}
  This together with  \eqref{X1}, \eqref{X2}  and $k(t)+1\le (t+1)\tau^{-1}$  leads to
  $$  \| u_t^{N,\tau}\|_{L^p(\OO; \dot\H^\sigma)} \le c_2  + c_0 \vv^{-\ff \sigma 2} (1+\|u_0\|_{\dot\H^\sigma})+ c_5  (1+t) \tau^{-2} R^{-p}   \vv^{-(2m+2)p-1} (1+\|u_0\|_{\dot\H^\sigma})^{(2m+1)p}$$
  for some $c_5>0.$
 By $R= \vv^{\frac 3{2(4m+2)}}\tau^{-\ff {1} {(4m+2)\gg_2}} $ and $p= 6(2m+1)\gg_2$, when \eqref{TT} holds we have
  $$ \tau^{-2} R^{-p}   \vv^{-(2m+2)p-1} (1+\|u_0\|_{\dot\H^\sigma})^{(2m+1)p} \le \vv^{-1} (1+\|u_0\|_{\dot\H^\sigma}),$$
  so that   \eqref{LN1-1} holds for some constant $c>0$.

 (b) Let $\beta \in (\sigma, 2)\cap  [\alpha,1+\alpha]$ and $p=6(2m+1)\gg_3$. By \eqref{ES1}, \eqref{PB}, \eqref{BN0},  \eqref{LN1-1} and $\sigma \ge \aa_{m,d},$   we find constants $c_1,c_2,c_3>0$ such that
 \beg{align*}  & \| u_t^{N,\tau}\|_{L^{6\gg_3}(\OO; \dot\H^\beta)}\\
 &\le \|u_0\|_{\dot\H^\beta}  +c_1+ c_1\vv^{-1}\int_0^t  (t-s)^{-\ff \beta 2}\e^{-\ff{\ll_1}2(t-s)} \Big(1+ \|u_{\tau_{k(s)}}^{N,\tau}\|_{L^{p}(\OO; L^{4m+2}(\mu))}^{2m+1}\Big) \d s\\
 &\le \|u_0\|_{\dot\H^\beta}  +c_1+ c_2\vv^{-1}\int_0^t (t-s)^{-\ff \beta 2}\e^{-\ff{\ll_1}2(t-s)} \Big(1+ \|u_{\tau_{k(s)}}^{N,\tau}\|_{L^{p}(\OO; \dot\H^{\aa_{m,d}})}^{2m+1}\Big) \d s\\
 &\le c_3 \vv^{-1}\big[(1+t)\vv^{-1} (1+\|u_0\|_{\dot\H^\beta})\big]^{2m+1}.\end{align*}
 So, \eqref{LN1-2} holds for $c=c_3.$
  \end{proof}

 Under the condition of Lemma \ref{LN1}, if $\tau$ satisfying $\eqref{TT}$ and $p= 6(2m+1)\gg_3$, we can use the same steps to verify that
  \beq\label{LN1-pri-l2} \|u_t^{N,\tau}\|_{L^{p}(\OO; \H)}\le  c(1+t) (1+\|u_0\|_{ \H}).\end{equation}

\begin{lem}\label{lem-cri} Assume  {\bf (A)}, {\bf (F)} and {\bf (W)} with $\aa_{m,d}:=\ff{md}{2m+1}=1,$  let $\eqref{Q}$ hold for   $\aa=1$,  let $\sigma\in (1,2)$ and $\gg_3$ be defined in \eqref{DD}.
\begin{enumerate}
\item[(i)]  There exists a constant $c>0$   such that for $\tau$ satisfying $\eqref{TT}$ and  $p= 6(2m+1)\gg_3$,
 \beq\label{LN1-1cri} \|u_t^{N,\tau}\|_{L^{p}(\OO;\dot \H^{\sigma})}\le  c(1+t) \vv^{-\ff {(2m+3)} 2} (1+\|u_0\|_{\dot\H^\sigma})^{2m+1} .\end{equation}
\item[(ii)]  For any $\beta \in (\sigma, 2)$,   there exists a constant $c>0$  such that for $\tau$ satisfying $\eqref{TT}$,
   \beq\label{LN1-2cri} \|u_t^{N,\tau}\|_{L^{6\gg_3}(\OO;\dot\H^\beta)}\le  c (1+t)^{2m+1}    \vv^{-\ff {(2m+1)(2m+3)} 2-1} (1+\|u_0\|_{\dot\H^\beta})^{(2m+1)^2} .\end{equation}
\end{enumerate}
\end{lem}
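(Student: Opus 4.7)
The plan is to mirror the proof of Lemma \ref{LN1}, replacing the ingredient Lemma \ref{LN0}(3) (which applies when $\aa_{m,d}<1$) by Lemma \ref{LN0}(4) (the critical analogue), and then closing the bootstrap with the exponent $\gg_3=2/(\sigma-1)$ in place of $\gg_2$. The overall idea is first to use the stopping-time truncation $T_{R,\sigma}$ with the sharp choice $R=\vv^{3/(2(4m+2))}\tau^{-1/((4m+2)\gg_3)}$ from \eqref{TT1} to obtain an a priori $\dot\H^\sigma$ estimate on the event $\{t\le T_{R,\sigma}\}$, and then control the tail event $\{t>T_{R,\sigma}\}$ by Markov's inequality and a single-step supremum bound.

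For part (i), I start from the mild formulation
\[
u_t^{N,\tau}=\pi_N u_0+Z_t^N+\int_0^t \e^{-A(t-s)}\pi_N\Theta_{\tau,\sigma}(u_{\tau_{k(s)}}^{N,\tau})\,\d s,
\]
and decompose the $L^p(\OO;\dot\H^\sigma)$ norm via $1=1_{\{t\le T_{R,\sigma}\}}+1_{\{t>T_{R,\sigma}\}}$ exactly as in \eqref{X2}. The truncated part is handled by Lemma \ref{LN0}(4), which already yields a bound $c(1+\|u_0\|_{\dot\H^\sigma})+c\vv^{-(2m+3)/2}(1+\|u_0\|_{\dot\H^1})^{2m+1}$ dominated by the right-hand side of \eqref{LN1-1cri}. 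For the tail, I use $1_{\{t>T_{R,\sigma}\}}\le \sum_{i=0}^{k(t)}1_{A_i}$ as in the proof of Lemma \ref{LN1}(1), apply Markov's inequality to get $\P(A_i)\le R^{-p}\E[1_{\{i\tau\le T_{R,\sigma}\}}\sup_{s\in[i\tau,(i+1)\tau]}\|u_s^{N,\tau}\|_{\dot\H^\sigma}^p]$, and estimate this supremum on one step by combining Lemma \ref{LN0}(4) at the initial time $i\tau$ with \eqref{TA}, \eqref{BN0}, \eqref{PB}, \eqref{Q}, and the Da Prato--Zabczyk bound for the noise contribution on $[i\tau,(i+1)\tau]$. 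With $p=6(2m+1)\gg_3$ and $k(t)+1\le(t+1)/\tau$, the restriction \eqref{TT} was designed so that $\tau^{-2}R^{-p}\vv^{-(2m+2)p-1}(1+\|u_0\|_{\dot\H^\sigma})^{(2m+1)p}$ is absorbed into $\vv^{-(2m+3)/2}(1+\|u_0\|_{\dot\H^\sigma})^{2m+1}$, delivering \eqref{LN1-1cri}.

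For part (ii), I iterate once more: take $\bb\in(\sigma,2)$, write the mild form, and use \eqref{ES1} to bound the stochastic convolution $Z_t^N$ in $\dot\H^\bb$ uniformly in $t,N$. For the drift integral, I apply \eqref{BN0} with exponent $\bb/2<1$ and the pointwise bound \eqref{TA} to obtain
\[
\|u_t^{N,\tau}\|_{L^{6\gg_3}(\OO;\dot\H^\bb)}\le \|u_0\|_{\dot\H^\bb}+c+c\vv^{-1}\int_0^t(t-s)^{-\bb/2}\e^{-\ll_1(t-s)/2}\bigl(1+\|u_{\tau_{k(s)}}^{N,\tau}\|_{L^p(\OO;L^{4m+2}(\mu))}^{2m+1}\bigr)\d s.
\]
Since $\sigma\ge \aa_{m,d}=1$, the embedding \eqref{PB} gives $L^{4m+2}(\mu)\hookrightarrow \dot\H^\sigma$, so I can plug in \eqref{LN1-1cri} and use $\int_0^t(t-s)^{-\bb/2}\e^{-\ll_1(t-s)/2}\d s<\infty$ (as $\bb<2$), raising the estimate from part (i) to the power $2m+1$ to reach \eqref{LN1-2cri}.

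The main obstacle is tracking how the powers of $\vv^{-1}$ and $(1+\|u_0\|_{\dot\H^\sigma})$ propagate through the tail bound in part (i): the source of the critical exponent $(2m+3)/2$ is the loss $\vv^{-1/2}$ in Lemma \ref{LN0}(4) combined with the extra $\vv^{-(2m+3)/2}$ arising when one bounds $\|u_{i\tau}^{N,\tau}\|_{\dot\H^\sigma}^{2m+1}$ through the $\dot\H^1$-norm in the critical case. Verifying that the choice of $R$ and the stepsize constraint \eqref{TT} (with $\gg_3$ rather than $\gg_2$) is precisely tuned so that these terms balance is the delicate part; once that arithmetic is laid out exactly as in Lemma \ref{LN1}(1) with $\sigma-1$ replacing $\sigma$ in the relevant Sobolev exponents, both assertions follow.
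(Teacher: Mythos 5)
Your proposal follows essentially the same route as the paper: truncate with $T_{R,\sigma}$ at $R=\vv^{3/(2(4m+2))}\tau^{-1/((4m+2)\gg_3)}$, bound the truncated part by Lemma \ref{LN0}(4), control the tail via \eqref{X2} and a one-step supremum estimate, absorb the remainder using \eqref{TT}, and bootstrap to $\dot\H^\beta$ for part (ii). The only caveat is that the tail term you quote, $\tau^{-2}R^{-p}\vv^{-(2m+2)p-1}(1+\|u_0\|_{\dot\H^\sigma})^{(2m+1)p}$, is the subcritical expression from Lemma \ref{LN1}(1); in the critical case the one-step bound yields $\tau^{-2}R^{-p}\vv^{-\frac{(2m+1)(2m+3)}{2}p-p-1}(1+\|u_0\|_{\dot\H^\sigma})^{(2m+1)^2p}$, which is exactly the arithmetic you flag as needing verification and which \eqref{TT} is tuned to absorb.
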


\begin{proof}
Let
 $R= \vv^{\ff 3{2(4m+2)}} \tau^{-\ff {1}{(4m+2)\gg_3}}.$ The proof is similar to that of Lemma \ref{LN1}.   The main difference is replacing \eqref{X1}  with \eqref{TT0} by \eqref{LN0-4} with \eqref{TT1}.
 Notice that  \eqref{X2} also holds. By \eqref{LN0-4}, there exists $c_1,c_2,c_3>0,$
 \beg{align*} &\bigg\|1_{\{i\tau\le T_{R,\sigma}\}} \sup_{s\in [i\tau,(i+1)\tau]} \|u_s^{N,\tau}\|_{\dot\H^\sigma}\bigg\|_{L^p(\OO)}
  \le c_1 + \big\|1_{\{i\tau\le T_{R,\sigma}\}}  u_{i\tau}^{N,\tau}\big\|_{L^p(\OO;\dot\H^\sigma)} \\
  &\quad + \bigg\| \sup_{s\in [i\tau,(i+1)\tau]} \int_{i\tau}^s1_{\{i\tau\le T_{R,\sigma}\}}\|\e^{-A(s-r)} \Theta_{\tau,\sigma}(u_{i\tau}^{N,\tau})\|_{\dot\H^\sigma}\d r \bigg\|_{L^p(\OO)}\\
  & \le  c_2  + c_2 \vv^{-\frac {2m+3}2} (1+\|u_0\|_{\dot\H^\sigma})^{2m+1} + c_2\vv^{-1} \Big(1+ \|1_{\{i\tau\le T_{R,\sigma}\}} u_{i\tau}^{N,\tau}\|_{L^{(2m+1)p}(\OO;\dot\H^{1})}^{2m+1}\Big)\\
  &\le    c_3 \vv^{-\frac {(2m+3)(2m+1)}2-1} (1+\|u_0\|_{\dot\H^\sigma})^{(2m+1)^2}. \end{align*}

  This together with  \eqref{LN0-4} , \eqref{X2}  and $k(t)+1\le (t+1)\tau^{-1}$  leads to
 \begin{align*}
  \| u_t^{N,\tau}\|_{L^p(\OO; \dot\H^\sigma)} &\le c_4 \vv^{-\ff {2m+3} 2} (1+\|u_0\|_{\dot\H^\sigma})^{2m+1}\\
  &+ c_4  (1+t) \tau^{-2} R^{-p}   \vv^{-\frac {(2m+1)(2m+3)}2p-p-1} (1+\|u_0\|_{\dot\H^\sigma})^{(2m+1)^2p}
  \end{align*}
  for some $c_4>0.$
 By $R= \vv^{\frac 3{2(4m+2)}}\tau^{-\ff {1} {(4m+2)\gg_3}} $ and $p= 6(2m+1)\gg_3$, when \eqref{TT} holds we have
  $$ \tau^{-2} R^{-p}   \vv^{-\frac {(2m+1)(2m+3)}2p-p-1} (1+\|u_0\|_{\dot\H^\sigma})^{(2m+1)^2p} \le \vv^{-\ff {2m+3} 2} (1+\|u_0\|_{\dot\H^\sigma})^{2m+1},$$
  so that   \eqref{LN1-1cri} holds for some constant $c>0$.

When $\beta\in (\sigma,2)$, the same procedures in the proof of (ii) in Lemma \ref{LN1} yields \eqref{LN1-2cri}.

\end{proof}

\section{Log-Harnack inequality  and gradient estimate}
\label{sec-4}
For $N\in \bar\N$, let $\B_b(\H_N)$ be the space of all bounded measurable functions on $\H_N$, let $\B_b^+(\H_N)$ be the class of positive bounded measurable functions on $\H_N$, and let
$C_{bl}(\H_N)$ be the set of all bounded Lipschitz functions on $\H_N$.  The Lipschitz constant of $\psi$ is defined as
 $$\|\nabla \psi\|_{\infty}:=\sup_{x\ne y}   \ff{|\psi(x)-\psi(y)|}{\|x-y\|_\H},$$
  which coincides with the uniform norm of the gradient $\nn\psi$ when $\psi\in C_b^1(\H_N)$.

Let $P_t^N$ be the Markov semigroup of $u_t^N$ on $\H_N,$
i.e. for any $\psi\in \B_b(\H_N),$
$$P_t^N \psi(x):=  \E \big[\psi (u_t^N(x))\big],\ \ t\ge 0, \ x\in \H_N,$$
where  $u_t^N(x)$ solves \eqref{sac} with initial value $u_0^N(x)=x\in \H_N$.

The following gradient estimate  is the key   to cancel the exponential term of $\vv^{-1}$ in the proof of Theorem \ref{T1}.

\beg{lem}\label{L3}  Assume  {\bf (A)}, {\bf (F)} and {\bf (W)}. For  any $N\in\bar \N$ and  $ \psi\in \B_{b}^+(\H_N)$,
\beq\label{*Y} P_t^N \log \psi(y)\le \log P_t^N \psi(x) +  \|\nn\log \psi\|_\infty \e^{-\ff1\vv t}\|x-y\|_\H +\ff {\gg(\vv)^2(1+\kk_1)^2}  {4 \vv}  \|x-y\|_\H^2,\ \ t>0, x,y\in\H_N.\end{equation}
Consequently, for any $ \psi\in C_{bl}(\H_N)$,
\beq\label{GG} \|\nn P_t^N \psi\|_\infty\le \e^{-\ff 1 \vv t}\|\nn\psi\|_\infty + \ff{\gg(\vv) (1 +\kk_1)}{2\ss{\vv  }}\|\psi\|_\infty,\ \  t\ge 0.\end{equation}
\end{lem}

\beg{proof} The proof is based on the coupling by change of measures, which is developed in \cite{W07} to establish Harnack type inequalities.

Observe that
\beq\label{O} \|u\|_{\dot \H_0^1}^2 \ge \ll_{N_\vv+1} \big\|(1-\pi_{N_\vv}) u\big\|_\H^2\ge \ff{1+\kk_1}{\vv} \|(1-\pi_{N_\vv})u\|_\H^2.\end{equation}
Let $u_t^N$ solve \eqref{sac} with $u_0^N=x,$ and consider
following coupled SDE with $\tt u_0^N=y$:
\beq\label{UN'} \d\tt u_t^N =  - \Big(A \tt u_t^N  + \ff 1 \vv \pi_N f(\tt u_t^N)+\ff{1+\kk_1}\vv \pi_{N_\vv}\big(\tt u_t^N-u_t^N\big)\Big) \d t+ \pi_N\d W_t.\end{equation}
By \eqref{F3}, \eqref{O}  and It\^o's formula,  we obtain
\beg{align*} &\d \|\tt u_t^N- u_t^N\|_\H^2\\
&\le -2 \bigg(\|\tt u_t^N- u_t^N\|_{\dot\H^1}^2 -\ff {\kappa_1}\vv \|\tt u_t^N- u_t^N\|_\H^2 +\ff{1+\kk_1}\vv \big\|\pi_{N_\vv} (\tt u_t^N- u_t^N)\big\|_\H^2\bigg)\d t\\
&\le -\ff{2 }\vv  \|\tt u_t^N- u_t^N\|_\H^2\d t.\end{align*}
Thus,
\beq\label{D1}  \|\tt u_t^N- u_t^N\|_\H \le \e^{-\ff1 \vv t}\|x-y\|_\H,\ \ t\ge 0.\end{equation}

On the other hand, let
$$\tt W_t:= W_t- \int_0^t Q^{\ff 1 2} \xi_s\d s,\ \ \ \  \xi_s:= \ff{1+\kk_1}\vv Q^{-\ff 1 2} \pi_{N_\vv} (\tt u_s^N-u_s^N).$$
Here the operator $Q^{\frac 12}$ is defined by $Q^{\ff 12}e_i:=\sqrt{q_i}e_i, i\in \bar{\mathbb  N},$ and $Q^{-\ff 12}$ is the pseudo-inverse of $Q^{\frac 12}$. 

By Girsanov's theorem, $(\tt W_s)_{s\in [0,t]}$ is a $Q$-Wiener process under the weighted probability $R_t\d\P$, where
$$R_t:= \e^{\int_0^t \<\xi_s, \d W_s\>_\H-\ff 1 2 \int_0^t \|\xi_s\|_\H^2\d s}.$$
Rewriting \eqref{UN'} as
$$\d\tt u_t^N =  - \Big(A \tt u_t^N(t) + \ff 1 \vv \pi_N f(\tt u_t^N) \Big) \d t+ \pi_N\d \tt W_t,$$
by the weak uniqueness of this SDE, we obtain that for any   positive function $\psi\in C_b^1(\H_N)$,
$$P_t^N \log \psi(y)= \E[R_t\log \psi(\tt u_t^N)]= \E [R_t\log \psi(u_t^N)] + \E\big[\log \psi(\tt u_t^N)-\log \psi(u_t^N)\big].$$
Combining this with Young's inequality (see \cite[Lemma 2.4]{ATW}) and \eqref{D1}, we obtain
\beq\label{*Y2} \beg{split} &P_t^N \log \psi(y)\le \log \E \psi(u_t^N) + \E[R_t\log R_t] + \|\nn\log \psi\|_\infty \e^{-\ff 1\vv t} \|x-y\|_\H\\
&= \log P_t^N \psi(x)+\ff 1 2 \E\bigg[R_t\int_0^t \|\xi_s\|_\H^2\d s \bigg]+  \|\nn\log \psi\|_\infty \e^{-\ff 1\vv t}  \|x-y\|_\H,\end{split}\end{equation}
where the last step follows from
$$\log R_t= \int_0^t \<\xi_s,\d \tt W_s\>_\H +\ff 1 2 \int_0^t \|\xi_s\|_\H^2\d s,$$
and that  $\int_0^t \<\xi_s,\d \tt W_s\>_\H$ is a martingale under the weighted probability $R_t\d\P$.
Noting that \eqref{AN} and \eqref{D1} imply
$$\|\xi_s\|_\H^2 \le \|Q^{-\ff 1 2}\|_{\mathcal L (\H_{N_\vv})}^2\Big(\ff{1+\kk_1}{\vv}\Big)^2  \e^{-\ff{1}\vv t}\|x-y\|_\H^2= \gg(\vv)^2 \Big(\ff{1+\kk_1}{\vv}\Big)^2\e^{-\ff{2 }\vv t}\|x-y\|_\H^2,$$
we deduce \eqref{*Y} from \eqref{*Y2}.   By \cite[Theorem 2.1]{BWY2019},   \eqref{GG} follows from \eqref{*Y}.

\end{proof}

Next, we estimate Sobolev norms of $\nn P_t^N$ which is helpful for improving the convergence order w.r.t $\tau$ and $\lambda_N^{-1}$ (see, e.g., \cite{Deb11,Kru14a} and Remark \ref{Remark 2.1.}). For a function $\psi\in C^1(\H_N)$ and a constant  $\aa \ge 0,$ let
\beg{align*} \|\nn \psi(x)\|_{\dot\H^{\aa}}:=\bigg(\sum_{i=1}^N \ll_i^{\aa}\big|\nn_{e_i} \psi(x)\big|^2 \bigg)^{\ff 1 2},\end{align*}
where $\nn_{e_i} \psi(x):=\<\nabla \psi(x),e_i\>.$
For any $x\in\H_N$, let $u_t^N(x)$ solve \eqref{sac} for   initial value   $u_0^N=x$.  We have the following result.

\beg{lem}\label{L4}  Assume  {\bf (A)}, {\bf (F)} and {\bf (W)}.  \beg{enumerate}
 \item[$(1)$] For any constants  $\aa_1\in (0,2)$ and $q\in [1,\infty)$ such that
\beq\label{00} \dd:=\ff{\aa_1} 2 +\ff{d }{4q}<1,\end{equation}
there exists  a constant $c>0$ such  that for any $t>0, N\in\bar\N, x\in\H_N$ and $\psi\in C_{bl}(\H_N),$
\beq\label{GR}\beg{split}&  \|\nn P_t^N\psi(x)\|_{\dot\H^{\aa_1}} \\
&\le c   \|\nn \psi\|_\infty\bigg[t^{-\ff{\aa_1} 2}
+ \e^{\ff{\kk_1t}\vv  }  \ll_1^{\dd-1}   \vv^{-1}  \sup_{s\in [0,t]}\E\|u_s^N(x)\|_{L^{4qm} (\mu)}^{2m}\bigg].\end{split} \end{equation}
\item[$(2)$]  Let $m(d-2)\le 1$ and let $\eqref{Q}$  hold for some  $\aa\in \big[\aa_{m,d},1\big]\cap\big(\ff{d-2}2,1\big]$. Then for any $\aa_1\in (0,2)$ and $\beta\in (\frac d2,1+\alpha]$,
there exists a constant $c>0$  such  that  any $t>0, N\in\bar\N, x\in\H_N$ and $\psi\in C_{b,l}(\H_N),$
\beq\label{GR2}   \beg{split} &\|\nn P_t^N\psi(x)\|_{\dot\H^{\aa_1}}  \\
&\le c   \|\nn \psi\|_\infty\bigg[t^{-\ff{\aa_1} 2}
+ \e^{\ff{\kk_1t}\vv  }    \vv^{-1}  \Big( \|x\|_{\dot\H^{\beta}}^{2m} + \vv^{-(md+2)m}\big(1+\|x\|_{\dot \H^1}^{2m(2m+1)}\big) \Big)\bigg].\end{split} \end{equation}
  Consequently,
\beq\label{GR3} \beg{split}
&\|\nn P_t^N\psi(x)\|_{\dot\H^{\aa_1}}   \le c  \Big(\e^{-\ff 1 \vv t}\|\nn\psi\|_\infty + \ff{\gg(\vv) (1 +\kk_1)}{2\ss{\vv  }}\|\psi\|_\infty\Big) \\
&\quad\times
 \bigg[ t^{-\ff{\aa_1} 2}
+      \vv^{-1}    \Big(\|x\|_{\dot\H^{\beta}}^{2m} + \vv^{-(md+2)m}\big(1+\|x\|_{\dot \H^1}^{2m(2m+1)}\big) \Big)\bigg].  \end{split}  \end{equation}
  \end{enumerate}
\end{lem}

\begin{proof} (1)   It is easy to see that for any $\eta_0\in \H_N$,
$$ \eta_t:=\lim_{\vv\to 0}\ff{u_t^N(x+\vv \eta_0)- u_t^N(x)}\vv$$
exists and solves the equation
\beq\label{00e}\eta_t' = -\pi_N\Big(A+\vv^{-1} f'(u_t^N(x))\Big)  \eta_t,\ \ t\ge 0.\end{equation}
By the chain rule,
\beq\label{01} \big|\<\nn P_t^N\psi(x), \eta_0\>_\H\big|= \big|\E \<\nn \psi(u_t^N(x)), \eta_t\>_\H\big|
\le \|\nn\psi\|_\infty \E \|\eta_t\|_\H. \end{equation}
  To estimate $\E \|\eta_t\|_\H$, we introduce the random operators
  $$\H_N\mapsto I_{s,t} h\in\H_N,\ \ t\ge s\ge 0,$$
  where for each $s\ge 0$ and $h\in \H_N$, $h_{s,t}:= I_{s,t} h$ solves the equation
\beq\label{02} \ff{\d}{\d t} h_{s,t}= -\pi_N\Big(A+\vv^{-1} f'(u_t^N(x))\Big)  h_{s,t},\ \ t\ge s,\ h_{s,s}=h.\end{equation}
By \eqref{00e},   $\tt \eta_t:= \eta_t-  \e^{-At}\eta_0$ solves the equation
$$\tt\eta_t'=  -\pi_N\Big(A+\vv^{-1} f'(u_t^N(x))\Big)  \tt\eta_t -\vv^{-1}\pi_N f'(u_t^N(x)) \e^{-At}\eta_0,\ \ t\ge 0,$$
so that
\beq\label{03} \eta_t= \tt\eta_t + \e^{-A t}\eta_0= \e^{-At}\eta_0- \ff 1\vv \int_0^t I_{s,t}
 \pi_N f'(u_t^N(x)) \e^{-As}\eta_0\d s.\end{equation}
By \eqref{02} and \eqref{F3}, we have
$$\ff{\d}{\d t} \|h_{s,t}\|_\H^2  \le \ff{2\kk_1}\vv \|h_{s,t}\|_\H^2,$$
so that
\beq\label{04} \|I_{s,t}h\|_\H=\|h_{s,t}\|_\H\le \e^{\ff{\kk_1}\vv (t-s)} \|h\|_\H,\ \ t\ge s\ge 0.\end{equation}
By \eqref{F4} and H\"older's inequality,  we find a constant $c_1>0$ such that for any $q\in  [1,\infty)$,
\beq\label{04'} \beg{split} &\big\|f'(u_s^N(x)) \e^{-(A+\ll)s}\eta_0\big\|_\H \le \|f'(u_s^N(x))\|_{L^{ 2q }(\mu)} \|\e^{-(A+\ll)s}\eta_0\|_{L^{\ff{2q}{q-1}}(\mu)}\\
&\le c_1   \|u_s^N(x)\|_{L^{ 4qm }(\mu)}^{2m}  \|\e^{- A s}\eta_0\|_{L^{\ff{2q}{q-1}}(\mu)}.\end{split}\end{equation}
 By  \eqref{Con-est} and \eqref{BN0}, we find   constants $c_2,c_3>0$,
 $$\|\e^{-As}\eta_0\|_{L^{\ff{2q}{q-1}}(\mu)}\le c_2 s^{-\ff{d}{4q}} \e^{-\ff{\ll_1}2 s} \|\e^{-\ff s 2 A }\eta_0\|_{L^2  (\mu)}\le  c_3 s^{-\dd} \e^{-\ff{\lambda_1}2 s} \|\eta_0\|_{\dot\H^{-\aa_1}}.$$
 Combining this with \eqref{00e}, \eqref{03},  \eqref{04} and \eqref{04'}, we find a constant $c_4>0$ such that
 $$ \E \|\eta_t\|_\H \le c_4 \|\eta_0\|_{\dot\H^{-\aa_1}}\bigg[t^{-\ff {\aa_1} 2}
+ \ff{\e^{\ff{\kk_1}\vv t}}\vv \int_0^t  s^{-\dd}\e^{-\ff{ \lambda_1}2 s} \E\|u_s^N(x)\|_{L^{4qm} (\mu)}^{2m} \d s\bigg].$$
Combining this  with   \eqref{01} and the fact that \eqref{00} implies
\beg{align*}&\int_0^t s^{-\dd} \e^{-  \ff {\lambda_1}{2}s }\d s\\
&=\int_0^{t\ll_1}   r^{-\dd}  \ll_1^{\dd-1} \e^{-r/2}\d r \le c_5 \ll_1^{\dd-1},\ \ t>0 \end{align*} for some constant $c_5>0$, we get the desired estimate \eqref{GR}.

(2)  Since $m(d-2)\le 1$ and  $\aa\in \big[\aa_{m,d},1\big]\cap\big(\ff{(d-2)^+}2,1\big]$,  by  \eqref{Sob1}, we can take $q=\infty.$   So, \eqref{GR} holds, hence for any $\aa_1\in (0,2),$
\beq\label{GR'} \beg{split}&  \|\nn P_t^N\psi(x)\|_{\dot\H^{\aa_1}} \\
&\le c   \|\nn \psi\|_\infty\bigg[t^{-\ff {\aa_1} 2}
+ \e^{\ff{\kk_1t}\vv  }   \vv^{-1}  \sup_{s\in [0,t]}\E\|u_s^N(x)\|_{L^{\infty} (\mu)}^{2m} \bigg].\end{split} \end{equation}
This together with \eqref{I1} and \eqref{I2},  implies that for any $\beta\in (\frac d2,1+\alpha],$
$$\sup_{s\ge 0} \E\|u_s^N(x)\|_{L^{\infty} (\mu)}^{2m}\le c_1 \|x\|_{\dot\H^{\beta}}^{2m} + c_1 \vv^{-(md+2)m} \big(1+\|x\|_{\dot \H^1}^{2m(2m+1)}\big)$$
for some constant $c_1>0$. Combining this with \eqref{GR'}, we prove \eqref{GR2} for some constant $c>0$.

Observe that  \eqref{GR2} implies \eqref{GR3} for $t\le 2\vv$.
When $t>2\vv$,  by $P_t^N\psi= P_\vv^N(P_{t-\vv}^N\psi)$, we deduce from \eqref{GR2} that for some constants $c_3 >0$
\beg{align*} &\|\nn P_t^N\psi(x)\|_{\dot\H^{\aa_1}} \le c_2 \|\nn P_{t-\vv}^N\psi\|_\infty \bigg[\vv^{-\ff {\aa_1} 2}
+      \vv^{-1}    \Big( \|x\|_{\dot\H^{\beta}}^{2m} + \vv^{-(md+2)m}\big(1+\|x\|_{\dot \H^1}^{2m(2m+1)}\big) \Big)\bigg]. \end{align*}
Combining this with \eqref{GG}, we derive \eqref{GR3} for $t>2\vv.$
\end{proof}

\section{Proof of Theorem \ref{T1}}
\label{sec-5}

Throughout this section, we simply denote
$$\ u_t=u_t(x),\ \ \ u_t^N=u_t^N(x),\ \ \ t\ge 0,\ N\in \N,$$
 {and define 
 $$I_\vv(x):= \|x\|_{\dot\H^{\aa_{m,d}} }+ \vv^{-\ff{md}2}  +\vv^{-\ff{md}2} \|x\|_{ \H }^{2m+1},\ \ x\in\dot\H^{\aa_{m,d}},\ \vv\in (0,1).$$}
\begin{proof}[Proof of Theorem \ref{T1}(1)]
   We split  the proof  into two steps by considering  $t\le\vv$ and $t>\vv$ respectively.

(a) By combining  \eqref{PB} with \eqref{F1} and \eqref{*2'},  for any constant $p\ge 1$,   we find constants
$c_1,c_2,c_3>0$ such that
\beq\label{KP0}\beg{split} & \sup_{N\in\bar \N}  \Big(\E\big[\|f(u_t^N)\|_\H^p\big] \Big)^{\ff 1 p}
\le c_1 \sup_{N\in\bar{\N}}\Big(\E\Big[1+\|u_t^N\|_{L^{4m+2}(\mu)}^{p(2m+1)}\Big]\Big)^{\ff 1 p}\\
&\le c_1 + c_2 \sup_{N\in\bar \N}\Big(\E\Big[\|u_t^N\|_{\dot\H^{\aa_{m,d}}}^{p(2m+1)}\Big]\Big)^{\ff 1 p}
\le c_3\Big[\|x\|_{\dot\H^{\aa_{m,d}}}^{2m+1} + \vv^{-\ff{\aa_{m,d}(2m+1)}2}\Big(1+ \|x\|_{\H}^{2m+1}\Big)\Big] \\
&=   c_3\Big(\|x\|_{\dot\H^{\aa_{m,d}}}^{2m+1} + \vv^{-\ff{md}2}  +\vv^{-\ff{md}2} \|x\|_{ \H }^{2m+1} \Big)=  c_3 I_\vv(x).\end{split}\end{equation}
Noting that
$$  u_t=  \e^{At} x +\vv^{-1}\int_0^t   \e^{A(t-s)}f(u_s)\d s + \sum_{i=1}^\infty \int_0^t q_i^{\ff 1 2} e_i \d W_i(s),$$
by \eqref{KP0}, for any $p\ge 1$, we find a constant $c_4>0$ such that   for any $N\in \N$,
\beq\label{KP1} \beg{split} &\Big(\E\big[\|u_t-\pi_N u_t\|_\H^p\big]\Big)^{\ff 1 p}
 \le   \big\|\e^{-At}(1-\pi_N)x\big\|_\H \\
 &\quad
+\ff 1 {\vv}  \int_0^t\e^{- \ll_{N+1}(t-s)} \Big(\E\big\|f(u_s)\big\|_\H^p\Big)^{\ff 1 p} \d s
+ c_4\bigg(\sum_{i=N+1}\int_0^t  \e^{-2\ll_i(t-s)}q_i\d s\bigg)^{\ff 1 2}\\
&\le  \e^{-\ll_{N+1}t} \|x-\pi_Nx\|_\H  +  c_3 \ll_{N+1}^{-1}\vv^{-1} I_\vv(x)   +c_4\ss{\dd_N}.  \end{split}\end{equation}
By \eqref{sac} and the definition of $\pi_N u_t$, we obtain
$$\d (u_t^N-\pi_N u_t)= -A(\pi_N u_t-u_t^N)\d t-\ff 1 \vv \pi_N \big[f(u_s)-f(u_t^N)\big]\d t.$$
So, by \eqref{F3} and \eqref{F4}, and noting that $\H= L^2(\mu)$, we derive
\beq\label{KP2} \beg{split} &\d\big\|u_t^N-\pi_N u_t\big\|_\H^2 = - \Big[2 \big\|u_t^N-\pi_N u_t\big\|_{\dot \H^1}^2+\ff 2 \vv\big\<f(u_t)-f(u_t^N), \pi_N u_t- u_t^N\big\>_\H\Big]\d t\\
&\le - \ff 2 \vv \Big[ \big\<f(u_t)-f(u_t^N), u_t- u_t^N\big\>_\H +\big\<f(u_t)-f(u_t^N),\pi_N u_t- u_t \big\>_\H \Big]\d t\\
&\le \ff{2\kk_1}\vv \Big[\big\|u_t- u_t^N\big\|_\H^2  +  \big\|f(u_t)-f(u_t^N)\big\|_\H \big\|\pi_N u_t- u_t\big\|_\H\Big]\d t.\end{split}\end{equation}
Combining this with \eqref{KP0} and \eqref{KP1} for $p=2$, we find a constant $c_5>0$ such that
\beg{align*}&\E\Big[\big|u_t^N-\pi_N u_t\big\|_\H^2\Big]\\
 &\le \ff{2 \kk_1}\vv \int_0^t \Big[\E \Big[\big\|u_s- u_s^N\big\|_\H^2\Big] + \Big(\E\big[2\|f(u_t)\|_\H^2+2\|f(u_s^N)\|_\H^2\big]\Big)^{\ff 1 2} \Big(\E\big[\|u_s-\pi_N u_s\|_\H^2\big]\Big)^{\ff 1 2}\Big]\d s\\
&\le \ff{c_5}{2\vv} \int_0^t  \E \Big[\big\|u_s- u_s^N\big\|_\H^2\Big] \d s +\ff{c_5}2\vv^{-1} \ll_{N+1}^{-1} I_\vv(x)   \|x-\pi_Nx\|_\H\\
 &\quad +\ff{c_5t}2     \Big( \vv^{-1}\ss{\dd_N} I_\vv(x)
  + \vv^{-2}\ll_{N+1}^{-1 }I_\vv(x)^2 \Big). \end{align*}
This together with \eqref{KP1} for $p=2$ yields that   for some constant $c_6>0$,
\beg{align*} & \E\Big[\big\|u_t^N-  u_t\big\|_\H^2\Big]\le 2 \E\Big[\big\|u_t^N-  \pi_Nu_t\big\|_\H^2+ \big\| \pi_Nu_t -  u_t\big\|_\H^2\Big]\\
&\le \ff{c_5}{2\vv} \int_0^t  \big\|u_s- u_s^N\big\|_\H^2 \d s +\ff{c_5}2 \vv^{-1}\ll_{N+1}^{-1} I_\vv(x)   \|x-\pi_Nx\|_\H\\
&\quad  +\ff{c_5}2 \ff t \vv    \Big( \ss{\dd_N} I_\vv(x)
  + \vv^{-1}\ll_{N+1}^{-1 }I_\vv(x)^2 \Big)
    + 6 \|x-\pi_Nx\|_\H^2 + 6c_3^2 \vv^{-2}\ll_{N+1}^{-2} I_\vv(x)^2    + 6 c_4^2 \dd_N\\
&\le  \ff{c_5}{2\vv} \int_0^t  \big\|u_s- u_s^N\big\|_\H^2 \d s+  c_6(t+1)\|x-\pi_N x\|_\H^2\\
 &\quad + c_6  (t+1)  \Big( \vv^{-1}\ss{\dd_N} I_\vv(x)  +\vv^{-2} \ll_{N+1}^{-1 }I_\vv(x)^2 \Big).
 \end{align*}
By Gronwall's lemma, this implies
\beq\label{KP3}\beg{split} &\E\Big[\big\|u_t^N-  u_t\big\|_\H^2\Big]\le  c_6\e^{\ff{c_5}{2\vv} t}(t+1) \Big[  \|x-\pi_N x\|_\H^2 +   \vv^{-1}\ss{\dd_N} I_\vv(x)  + \vv^{-2}\ll_{N+1}^{-1 }I_\vv(x)^2 \Big].  \end{split} \end{equation}
Since
$$\hat\W_1(u_t,u_t^N)\le \Big( \E\Big[\big\|u_t^N-  u_t\big\|_\H^2\Big]\Big)^{\ff 1 2},$$
\eqref{KP3} implies   \eqref{BJ1} for  $t\le  \vv$.

(b) Now, let $t> \vv$ be fixed. Choose $K_t\in \mathbb N$ such that $\ff t{K_t}\in (\vv,2\vv],$ so that
\beq\label{MM1} t_k:=\ff {kt}{K_t}\in (k\vv, 2k\vv],\ \ \ k\in {\mathbb N}.\end{equation}
For any integer $0\le k\le K_t-1$, and any $y\in \H_N$, let $u_{k,s}(y)$ and $u_{k,s}^N(y)$ solve the equations
\beq\label{KN}\beg{split} & \d u_{k,s}(y)= -\Big(A u_{k,s}(y) +\ff 1\vv f(u_{k,s}(y))\Big)\d s+ \d   W_{s+  t_k},\ \ u_{k,0}(y)=y,\\
& \d u_{k,s}^N(y)= -\Big(A u_{k,s}^N(y) +\ff 1\vv \pi_N f(u_{k,s}^N(y))\Big)\d s+\pi_N   \d   W_{s+  t_k},\ \ u_{k,0}^N(y)=y.\end{split}\end{equation}
 By the flow property for pathwise unique equations,  we have
 \beg{align*}&u_{k,t-t_k}(u_{t_k}^N(x))= u_{k+1,t-t_{k+1}} (u_{k,t_1}(u_{t_k}^N(x))),\\
  &  u_{k+1,t-t_{k+1}}  (u_{t_{k+1}}^N(x))= u_{k+1,t-t_{k+1}} (u_{k,t_1}^{N}(u_{t_k}^N(x))),\ \
  0\le k\le K_t-1,\end{align*}
  and by the definition of $P_t$
$$  \Big|\E\big[\psi(u_t(x))-\psi( u_t (\pi_Nx){)}\big]\Big|= \big|P_t \psi(x)- P_t\psi(\pi_Nx)\big|
\le \big\|\nn P_t \psi\big\|_\infty\big\|x-\pi_Nx\big\|_\H.$$
So,
\beq\label{M1} \beg{split} &  \Big| \E\big[\psi(u_t(x))-\psi(u_t^N(x))\big]\Big| -\big\|\nn P_t \psi\big\|_\infty\big\|x-\pi_Nx\big\|_\H\\
& \le  \Big|\E\big[\psi(u_t(x))-\psi(u_{K_t,0} (u_t^N(x)))\big]\Big|- \Big|\E\big[\psi(u_t(x))-\psi( u_t (\pi_Nx){\color{red})}\big]\Big|\\
&\le \E\big[\psi(u_{0,t}(\pi_N x))-\psi(u_{K_t,0} (u_t^N(x)))\big]\Big|\\
&\le  \sum_{k=0}^{K_t-1} \Big| \E\big[\psi(u_{k,t-t_k}(u_{t_k}^N(x)))-\psi(u_{k+1,t-t_{k+1}} (u_{t_{k+1}}^N(x)))\big]\Big|\\
&= \ \sum_{k=0}^{K_t-1}  \Big|\E\big[\psi(u_{k+1,t-t_{k+1}} (u_{k,t_1}(u_{t_k}^N(x))))-\psi(u_{k+1,t-t_{k+1}} (u_{t_{k+1}}^N(x)))\big]\Big|\\
&=   \sum_{k=0}^{K_t-1}  \big|P_{t-t_{k+1}} \psi(u_{k,t_1}(u_{t_k}^N(x)){)}-P_{t-t_{k+1}} \psi( u_{t_{k+1}}^N(x))\big| \\
&\le   \sum_{k=0}^{K_t-1} \|\nn P_{t-t_{k+1}}\psi\|_\infty \E\|u_{k,t_1}(u_{t_k}^N(x))- u_{k,t_1}^{N}(u_{t_k}^N(x)) \|_\H.\end{split}\end{equation}
By using $W_{\cdot+t_k}$ replacing   $W_\cdot$ in the proof of \eqref{KP3}, we derive the same estimate for $u_{k,t_1}-u_{k,t_1}^{N}$ in place of $u_{t}-u_t^N$ for $t=t_1  \le 2\vv$. Combining this with
$ u_{t_k}^N(x)-\pi_Nu_{t_k}^N(x)=0  $
 and $\aa_{m,d}=\ff{md}{2m+1}$, we obtain
\beg{align*} & \E\Big[\|u_{k,t_1}(u_{t_k}^N(x))- u_{k,t_1}^{N}(u_{t_k}^N(x)) \|_\H^2\Big]= \E  \Big[\E\|u_{k,t_1}(y)- u_{k,t_1}^{N}(y) \|_\H^2\Big]_{y=u_{t_k}^N(x)}\\
&\le   c_6\e^{c_5 }    (2\vv+1) \E \Big( \vv^{-1}\ss{\dd_N} I_\vv\big(u_{t_k}^N(x)\big)  +\vv^{-2} \ll_{N+1}^{-1 }I_\vv\big(u_{t_k}^N(x)\big)^2 \Big).\end{align*}
Since $m(d-2)\le 1$ implies $md\le  2m+1$, by \eqref{*2} and   \eqref{*2'}, we find a constant $c_7>0$ such that
\beg{align*}& \E\big[ I_\vv\big(u_{t_k}^N(x)\big)^2\big]\le 3 \E\big[\|u_{t_k}^N(x)\|_{\dot\H^{\aa_{m,d}}}^{4m+2} \big]+ 3 \vv^{-md}+3 \vv^{-md}\E\big[\|u_{t_k}^N(x)\|_{ \H }^{4m+2}\big]\\
&\le 3c^{4m+2} \big( \|x\|_{\dot\H^{\aa_{m,d}}}+ \vv^{-\ff {\aa_{m,d}} 2} +\vv^{-\ff {\aa_{m,d}} 2} \|x\|_\H\big)^{4m+2}+ 3 \vv^{-md}+\vv^{-md}\big(\|x\|_\H+ c\big)^{4m+2}\\
 &\le c_7 I_\vv(x)^2.\end{align*}
 Therefore, there exists a constant $c_8>0$ such that
\beg{align*}   \E\Big[\|u_{k,t_1}(u_{t_k}^N(x))- u_{k,t_1}^{N}(u_{t_k}^N(x)) \|_\H^2\Big]
 \le c_8  \Big( \vv^{-1}\ss{\dd_N} I_\vv (x)  +\vv^{-2} \ll_{N+1}^{-1 }I_\vv (x)^2 \Big).
\end{align*}
  Combining this with \eqref{GG}, \eqref{MM1}  and \eqref{M1}, we find a constant $c >0$ such that
\beg{align*} &\hat \W_1(u_t(x), u_t^N(x)):=\sup_{\|\psi\|_{b,1}\le 1} \Big| \E\big[\psi(u_t(x))-\psi(u_t^N(x))\big]\Big| \\
&\le c  \vv^{-\ff 1 2}\gg(\vv)  \Big[\|(1-\pi_N)x\|_\H+  \frac {t}{\vv}\Big( \vv^{-\ff 1 2 } \dd_N^{\ff 1 4}  I_\vv (x)^{\ff 1 2}   + \vv^{-1 }\ll_{N+1}^{-\ff 1 2 }I_\vv (x)   \Big)\Big].
\end{align*}
This implies \eqref{BJ1} for $t>\vv.$
 \end{proof}

\begin{proof}[Proof of Theorem \ref{T1}(2)-(3)] Denote
$$J_{\vv,p}(x) = \sup_{t\ge 0,N\in\bar\N} \E\Big(1+ \|u_t^N\|_{L^{2mp}(\OO;L^\infty(\mu))}^{2m}\Big) .$$
By Lemma \ref{L2'}, in the cases of    Theorem \ref{T1}  (2)-(3), we find  a constant $c_0>0$ such that
\beq\label{JJ}   J_{\vv,p}(x)
 \le c_0 \beg{cases} \|x\|_{\dot\H^\beta}^{2m} + \vv^{-\beta m} (1+\|x\|_\H^{2m}), &\text{case (2)},\\
\|x\|_{\dot\H^\bb}^{2m} +\vv^{-2m} \|x\|_{\dot\H^{\aa_{m,d}}}^{2m(2m+1)} + \vv^{-m (md+2)} (1+\|x\|_\H^{2m(2m+1)}), &\text{case (3)},
\end{cases} \end{equation}
where $\beta\in (\frac 12,1]$ for case (2) and $\beta\in (\frac d2,2)$ for case (3).
By \eqref{F4},
$$\|f(u_t)-f(u_t^N)\|_\H\le \kk_1\big(1+\|u_t\|_{L^\infty(\mu)}^{2m}+ \|u_t^N\|_{L^\infty(\mu)}^{2m}\big)\|u_t-u_t^N\|_\H.$$
So, \eqref{KP2} can be modified as
\beg{align*} &\d \|u_t^N-\pi_Nu_t\|_\H^2\\
&\le \ff{2\kk_1}{\vv} \Big[\|u_t-u_t^N\|_\H^2+ \big(1+\|u_t\|_{L^\infty(\mu)}^{2m}+ \|u_t^N\|_{L^\infty(\mu)}^{2m}\big)\|u_t-u_t^N\|_\H\|\pi_N u_t-u_t\|_\H\Big]\d t.\end{align*}
Combining this with \eqref{KP1} for $p=4$, we find   constants $c_1,c_2>0$ such that
\beg{align*} & \E\big[\|u_t^N-\pi_Nu_t\|_\H^2\big]\le  \ff{2\kk_1}{\vv} \int_0^t \E\big[\|u_s-u_s^N\|_\H^2\big]\d s\\
&\quad + \ff{c_1}\vv \int_0^t \big(\e^{-\ll_{N+1}s}\|x-\pi_N x\|_\H+ \ll_{N+1}^{-1} \vv^{-1} I_\vv(x)+\sqrt{\delta_N}\big) J_{\vv,4}(x) \big(\E[\|u_s-u_s^N\|_\H^2]\big)^{\ff 1 2}\d s\\
&\le  \ff{c_2}{\vv} \bigg(\int_0^t \E\big[\|u_s-u_s^N\|_\H^2\big]\d s + J_{\vv,4}(x)^2   \big(\ll_{N+1}^{-1} \|x-\pi_N x\|_\H^2 +  t\big(\ll_{N+1}^{-1}\vv^{-1}I_\vv(x) + \sqrt{\dd_N}\big)^2\bigg).\end{align*}
This together with \eqref{KP1} yields that that for some constant $c_3>0$,
\beg{align*} &\E\big[\|u_t^N- u_t\|_\H^2  \big] \le 2\E\big[\|u_t^N- \pi_Nu_t\|_\H^2  \big]+2\E\big[\|u_t- \pi_N u_t\|_\H^2  \big]\\
&\le
 \ff{2c_2}\vv \int_0^t \E\big[\|u_s-u_s^N\|_\H^2\big]\d s\\
 &\quad + c_3 J_{\vv,4}(x)^2 \Big[(\ll_{N+1}^{-1}\vv^{-1}+1\big)\|x-\pi_Nx\|_\H^2+(1+\frac t {\vv})\big(\ll_{N+1}^{-2}\vv^{-2} I_\vv(x)^2+ \dd_N\big)\Big].\end{align*}
By Gronwall's inequality, we derive that
\beg{align*} & \E\big[\|u_t^N-\pi_Nu_t\|_\H^2\big]\\
&\le c_3\e^{\ff{2c_2t}\vv} J_{\vv,4}(x)^2 \Big[(\ll_{N+1}^{-1}\vv^{-1}+1\big)\|x-\pi_Nx\|_\H^2+(1+\frac t{\vv})\big(\ll_{N+1}^{-2}\vv^{-2} I_\vv(x)^2+ \dd_N\big)\Big].\end{align*}
In particular,   we find a constant $c_4>0$ such that
\beq\label{VB} \beg{split} & \sup_{t\le\vv} \E\big[\|u_t^N-\pi_Nu_t\|_\H^2\big]\\
&\le c_4 \e^{\ff{2c_2t}\vv} J_{\vv,4}(x)^2 \Big[(\ll_{N+1}^{-1}\vv^{-1}+1\big)\|x-\pi_Nx\|_\H^2+ (1+\frac t{\vv})\big(\ll_{N+1}^{-2}\vv^{-2} I_\vv(x)^2+ \dd_N\big)\Big].\end{split} \end{equation}
Combining this with  \eqref{JJ}, we derive  \eqref{BJ2}-\eqref{BJ3} for $t\le\vv$. When $t>\vv$,
by repeating step (b) in the proof of Theorem \ref{T1}(1) with \eqref{VB} in place of \eqref{KP3}, and applying \eqref{JJ}, we finish the proof.
\end{proof}

\section{Proof of Theorem \ref{T2}  }
\label{sec-6}

Let $\beta \in (\aa_{m,d},1+\alpha]$, and let $\gg_1$ be given in \eqref{DD}. We have
\beq\label{M0} \beg{split} &q:= \ff d{d+4m\beta-2md}\in \Big(\frac 12, \frac {d}{(d-2\beta)^+}\Big),\ \ \ \ff{4mq}{q-1}=\ff{2d}{(d-2\beta)^+},\\
&\gg_1=\ff \beta 2 -\ff{d(q-1)}{4q}>0,\ \ \ \end{split}\end{equation}
 Below we prove \eqref{A}-\eqref{B} for $\aa_{m,d} \le \sigma \le 1$ with $\aa_{m,d}<1$, and \eqref{B'} for $\sigma>\aa_{m,d}=1$ respectively.

 \beg{proof}[Proof of Theorem \ref{T2} (1)] Let $\aa_{m,d}<1$, $\sigma\in [\aa_{m,d},1]$ and $\beta\in [\sigma,1].$

 (a)  For any $y\in \H_N$ and $i\in\mathbb Z,$ let $(u_{i,s}^N)_{s\in [0,\tau]}$ and $(u_{i,s}^{N,\tau})_{s\in [0,\tau]}$ solve the SDEs
 \beq\label{MX}\beg{split} & \d  u_{i,s}^N(y)= \pi_N\big(\Theta_0(u_{i,s}^N(y))-Au_{i,s}^N(y)\big)\d s + \d W_{i\tau+s},\ \ u_{i,0}^N(y)=y,\\
 &\d  u_{i,s}^{N,\tau}(y)= \pi_N\big(\Theta_{\tau,\sigma}(y)-Au_{i,s}^{N,\tau}(y)\big)\d s + \d   W_{i\tau+s},\ \ u_{i,0}^{N,\tau}(y)=y.\end{split}\end{equation}
 Then
  \beq\label{MX3} \beg{split} &\sup_{s\in [0,\tau]} \E\|u_{i,s}^N(y)-u_{i,s}^{N,\tau}(y)\|_\H =\sup_{s\in [0,\tau]}  \int_0^s \E\Big\|\e^{-A(s-r)} \big(\Theta_0(u_{i,r}^N(y))-\Theta_{\tau,\sigma}(y)\big)\Big\|_\H\d r \\
  &\le \tau \E\big[\|\Theta_0(u_{i,r}^N(y))-\Theta_0(y)\|_\H+\|\Theta_{\tau,\sigma}(y) - \Theta_0(y)\|_\H\big].\end{split}\end{equation}
Let $\scr L_{\xi}$ stand for the law of random variable $\xi.$ We have
\beq\label{MX0}  \L_{u_{i,s}^{N}(y)}=\L_{u_s^N(y)},\ \  \L_{u_{i,s}^{N,\tau}(y)}=\L_{u_s^{N,\tau}(y)},\ \ u_{i,s}^N(u_{i\tau}^N(x))= u_{i\tau+s}^N(x),\ \ u_{i,s}^{N,\tau}(u_{i\tau}^{{N,\tau}}(x))= u_{i\tau+s}^{N,\tau}(x).\end{equation}
By \eqref{GG} and \eqref{MX0},  there exists a constant $\kk>0$ such that for any $\psi$ with $\|\psi\|_{b,1}\le 1$ and $x\in \H_N,$
\beq\label{MX1}\beg{split} &\big|\E[\psi(u_t^N(x))-\psi(u_t^{N,\tau}(x){)}]\big|\\
&\le \E \sum_{i=0}^{{k(t)-1}} \Big|P_{t-(i+1)\tau}^N\psi(u_{i,\tau}^N(u_{i\tau}^{N,\tau}(x)))- P_{t-(i+1)\tau}^N\psi(u_{i,\tau}^{N,\tau}(u_{i\tau}^{N,\tau}(x)))\Big|\\
&\qquad  +\E  \Big|\psi\big(u_{k(t), t-k(t){ \tau}}^N(u_{k(t)\tau}^{N,\tau}(x))\big) - \psi\big(u_{k(t), t-k(t){\tau}}^{N,\tau} (u_{k(t)\tau}^{N,\tau}(x))\big) \Big|\\
 &\le \kk  \gg(\vv) \vv^{-\ff 1 2 }  \sum_{i=0}^{{k(t)-1}}  \E\|u_{i,\tau}^N(u_{i\tau}^{N,\tau}(x))-u_{i,\tau}^{N,\tau}(u_{i\tau}^{N,\tau}(x))\|_\H \\
 &\qquad + \E\|u_{k(t), t-k(t){\tau}}^N(u_{k(t)\tau}^{N,\tau}(x))- u_{k(t), t-k(t){\tau}}^{N,\tau} (u_{k(t)\tau}^{N,\tau}(x))\|_\H.\end{split}\end{equation}
{Here we use the convention that $\sum_{i=0}^{{k(t)-1}}(\cdot)=0$ if $k(t)=0.$}

 To estimate these expectations, we simply denote
 $$y_i= u_{i\tau}^{N,\tau}(x),\ \ \ 0\le i\le k(t).$$
  By \eqref{*2'}, \eqref{LN1-1}  for $p= 6(2m+1)\gg_2$,  \eqref{LN1-pri-l2}, and applying \eqref{MX0}, we find a constant $c_0>0$ such that
  \beq\label{MX2} \|y_i\|_{L^p(\OO, \dot\H^\beta)}\le c_0  \vv^{-1} a_{t,x},\ \ \
   \|u_{i,s}^N(y_i)\|_{L^p(\OO, \dot\H^\beta)}\le c_0  \vv^{-1} a_{t,x}, \end{equation}
   where
  \begin{align}\label{def-atx}
  a_{t,x}:=(1+t)(1+\|x\|_{\dot \H^{\beta}}).
  \end{align}
 Below, we use \eqref{MX2} to estimate   the  expectation in \eqref{MX3}.

 (b)
 Since $\Theta_0=-\vv^{-1} f$ and $\ff 1 2+ \ff {2m}{p}\le 1$ for $p=6(2m+1)\gg_2,$ by \eqref{F4}, H\"older's inequality, \eqref{M0} and \eqref{S}, we find a constant $c_1>0$ such that
 \beq\label{MX4}\beg{split} & \E\big[\|\Theta_0(u_{i,s}^N(y_i))-\Theta_0(y_i)\|_\H\big]\\
 &\le \kk_1\vv^{-1} \E\big\|(u_{i,s}^N(y_i)-y_i)(1+|y_i|^{2m}+|u_{i,s}^N(y_i)|^{2m})\big\|_{L^2(\mu)}\\
 &\le \kk_1\vv^{-1} \E\Big[\big\|u_{i,s}^N(y_i)-y_i\|_{L^{2q}(\mu)} \big(1+\|y_i\|_{L^{\ff{4mq}{q-1}}(\mu)}^{2m}+\|u_{i,s}^N(y_i)\|_{L^{\ff{4mq}{q-1}}(\mu)}^{2m}  \big)\Big]\\
 & \le c_1\vv^{-1} \E\Big[\big\|u_{i,s}^N(y_i)-y_i\|_{L^{2q}(\mu)} \big(1+\|y_i\|_{\dot\H^\beta}^{2m}+\|u_{i,s}^N(y_i)\|_{\dot\H^\beta}^{2m}  \big)\Big]\\
 & \le c_1\vv^{-1}  \big\|u_{i,s}^N(y_i)-y_i\|_{L^2(\OO;L^{2q}(\mu))} \Big(1+\|y_i\|_{L^{4m}(\OO;\dot\H^\beta)}^{2m}+\|u_{i,s}^N(y_i)\|_{L^{4m}(\OO;\dot\H^\beta)}^{2m}  \Big)\\
 &\le c_1  \Big(1+ c_0^{2m}\vv^{-2m} a_{t,x}^{2m}  \Big) \big\|u_{i,s}^N(y_i)-y_i\|_{L^2(\OO;L^{2q}(\mu))}.\end{split}\end{equation}
 By \eqref{M0} we have $2q\le \ff{2d}{(d-2\beta)^+}$, so that \eqref{S} and \eqref{Q} yield
 $$ I_1:=\bigg\|\int_0^s\e^{-(s-r)A} \d \widetilde W_{r+i\tau} \bigg\|_{L^2(\OO;L^{2q}(\mu))}\le c(\aa)  \bigg\|\int_0^s\e^{-(s-r)A} \d \widetilde W_{r+i\tau} \bigg\|_{L^2(\OO; \dot\H^\beta)}
 \le c_2 \ss s$$
 for some constant $c_2>0$.
 Next, since $p=6(2m+1)\gg_2\ge 2$ and
 $$\|A\e^{-\ff r 2 A} y_i\|_\H\le c  r^{\ff\beta 2 -1}\|y_i\|_{\dot\H^\beta}$$
 holds for some constant $c >0$, by \eqref{Con-est}  we find   constants $c_3,c_4>0$ such that
\beg{align*} I_2&:=\big\|(\e^{-s A}-1)y_i\big\|_{L^2(\OO;L^{2q}(\mu))}=\bigg\|\int_0^s \e^{-\ff r 2 A} A\e^{-\ff r 2 A} y_i\d r\bigg\|_{L^2(\OO;L^{2q}(\mu))}\\
& \le c_3 \int_0^s \e^{-\ff{\ll_1}2 r} r^{\ff\beta 2 -1-\ff{d(q-1)}{4q}}  \| y_i\|_{L^2(\OO;\dot\H^\beta)} \d r \le c_4  \tau^{\gg_1} \| y_i\|_{L^p(\OO;\dot\H^\beta)},\ \ s\in [0,\tau],\end{align*}
where $\gamma_1$   defined in \eqref{DD} satisfies \eqref{M0}.
  Moreover,    by \eqref{F1},   \eqref{Con-est}, \eqref{M0}  and $p=6(2m+1)\gg_2\ge 2,$  we find constants $c_5,c_6 >0$ such that
 \beg{align*} &I_3:= \bigg\|\int_0^s\e^{-(s-r)A} \Theta_0(u_{i,r}(y_i))\d r\bigg\|_{L^2(\OO;L^{2q}(\mu))}\le \kk_1 \vv^{-1} \int_0^s \|\e^{-(s-r)A} f(u_{i,r}^N(y_i))\|_{L^2(\OO;L^{2q}(\mu))}\d r \\
 &\le c_ 5  \vv^{-1} \int_0^s \e^{-\ff{\ll_1}2 (s-r)} (s-r)^{-\ff{d(q-1)}{4q}} \big\|1+ |u_{i,r}^N(y_i)|^{2m+1}\|_{L^2(\OO; L^2(\mu))}   \d r \\
 &\le c_5  \vv^{-1} \int_0^s \e^{-\ff{\ll_1}2 (s-r)} (s-r)^{-\ff{d(q-1)}{4q}} \big(1+\|u_{i,r}^N(y_i)\|_{L^2(\OO;L^{4m+2}(\mu))}^{2m+1} \big) \d r\\
 &\le c_6 \vv^{-1} \int_0^s \e^{-\ff{\ll_1}2 (s-r)} (s-r)^{-\ff{d(q-1)}{4q}} \big(1+\|u_{i,r}^N(y_i)\|_{L^p(\OO;\dot\H^{\aa_{m,d}})}^{2m+1}  \big)\d r\\
 &\le c_6 \vv^{-1}\tau^{\gg_1+\ff{2-\aa_{m,d}}2} \sup_{s\in [0,\tau]} \big(1+c(\aa_{m,d})^{2m+1}\|u_{i,s}^N(y_i)\|_{L^p(\OO;\dot\H^{\aa_{m,d}})}^{2m+1} \big),\ \ s\in [0,\tau].\end{align*}
Combining the above estimates with
  \eqref{MX} and   \eqref{MX2},  we find a constant $c_7>0$ such that
 \beq\label{MX*}  \beg{split} &\sup_{s\in [0,\tau]}\|u_{i,s}^N(y_i)-y_i\|_{L^2(\OO; L^{2q}(\mu)} \le I_1+I_2+I_3\\
&\le  c_7\Big[\tau^{\ff 1 2}+ \tau^{\gg_1}\vv^{-1} a_{t,x}+  \tau^{\gg_1+\ff{2-\aa_{m,d}}2} \vv^{-2m-2}  a_{t,x}^{2m+1}\Big].
\end{split} \end{equation}
This together with \eqref{MX4} and   $k(t)+1\le (1+t)\tau^{-1}$ yields that for some constant $c_8>0$
\beq\label{MX5}  \beg{split} & \sum_{i=0}^{k(t)} \E\big[\|\Theta_0(u_{i,r}^N(y_i))-\Theta_0(y_i)\|_\H\big] \\
& \le c_8  (1+t) \tau^{-1}\Big[\tau^{\ff 1 2} \vv^{-2m} a_{t,x}^{2m}
   + \tau^{\gg_1} \vv^{-2m-1} a_{t,x}^{2m+1}\\
   &\qquad +   \tau^{\gg_1+\ff{2-\aa_{m,d}}2} \vv^{ -4m-2} a_{t,x}^{4m+1}\Big].\end{split}\end{equation}

(c) By \eqref{TA'}, \eqref{S}, \eqref{MX2},
\begin{align}\label{def-gamma2}
4m+2\le \ff{2d}{(d-2\beta)^+},\ \ \  4m+2\le 6(2m+1)\gg_2,
\end{align}
 we find a constant $c_9>0$ such that
\beq\label{MX6} \beg{split} &\E\big[\|\Theta_0(y_i)-\Theta_{\tau,\sigma}(y_i)\|_\H\big]\\
 &\le c\tau \vv^{-1} \E\big[  (1+\|y_i\|_{\dot \H^{\alpha_{m,d}}}^{2m+1}) \|y_i\|^{(2m+1)}_{\dot \H^{\sigma}}\big]  \le c_9  \tau \vv^{-1}(\vv^{-1}a_{t,x})^{4m+2}.\end{split}\end{equation}
Combining this with \eqref{MX1}, \eqref{MX3} and \eqref{MX5},  we find a constant $c'>0$ such that
   \beg{align*}  &\hat\W_1(u_t^N(x), u_t^{N,\tau}(x))\le  c'(1+t) \gg(\vv)  \tau\vv^{-\ff 32}(\vv^{-1}a_{t,x})^{4m+2}\\
  &  + c'   (1+t)\gg(\vv) \vv^{-2m-\ff 1 2} \big(\tau^{\ff 1 2}  a_{t,x}^{2m} +
\tau^{\gg_1} \vv^{-1} a_{t,x}^{2m+1}+ \tau^{\gg_1+\ff{2-\aa_{m,d}}2} \vv^{-2m-2}a_{t,x}^{4m+1}\big).  \end{align*}
Combining this with the fact that the condition \eqref{TT} and $\alpha_{m,d}\le \beta\le 1$  ensure
$$\tau^{\gg_1+\ff{2-\aa_{m,d}}2} \vv^{-2m-2}a_{t,x}^{4m+1}\le \tau^{\gg_1} a_{t,x}^{4m+1},$$ we obtain \eqref{A}.
 \end{proof}

\beg{proof}[Proof of Theorem \ref{T2}(2)]  Let $\aa_{m,d}<1$, $\sigma\in [\aa_{m,d},1]$ and $\beta\in (\sigma,1+\aa].$
  We will    use notations introduced in   the proof of  Theorem \ref{T2}(1).

By \eqref{*2''}, \eqref{LN1-2}, and \eqref{LN1-pri-l2}, for   $\beta \in (\sigma,1+\alpha],$ instead of \eqref{MX2} we have
 $$ \|y_i\|_{L^{6\gg_3}(\OO, \dot\H^\beta)}\le c_0   \vv^{-2m-2} a_{t,x}^{2m+1},\ \ \
   \|u_{i,s}^N(y_i)\|_{L^{6\gg_3}(\OO, \dot\H^\beta)}\le c_0   \vv^{ -2m-2}a_{t,x}^{2m+1}.$$
Using these estimates in place of \eqref{MX2}, so that \eqref{MX4}, \eqref{MX*},   \eqref{MX5} and \eqref{MX6}  can be modified as  follows respectively:
 \beg{align*} & \E\big[\|\Theta_0(u_{i,s}^N(y_i))-\Theta_0(y_i)\|_\H\big]\\
 &\le c_1  \Big(1+ c_0^{2m}\vv^{-2m(2m+2)} a_{t,x}^{2m(2m+1)}  \Big) \big\|u_{i,s}^N(y_i)-y_i\big\|_{L^2(\OO;L^{2q}(\mu))},\end{align*}
    \beg{align*} &\sup_{s\in [0,\tau]}\|u_{i,s}^N(y_i)-y_i\|_{L^2(\OO; L^{2q}(\mu)}  \\
&\le  c_7\Big[\tau^{\ff 1 2}+ \tau^{\gg_1}\vv^{-2m-2} a_{t,x}^{2m+1}+  \tau^{\gg_1+\ff{2-\aa_{m,d}}2}  \vv^{-2m-2}a_{t,x}^{2m+1}\Big],
\end{align*}
  \beg{align*} &\sum_{i=0}^{k(t)} \E\big[\|\Theta_0(u_{i,r}^N(y_i))-\Theta_0(y_i)\|_\H\big]  \\
  &\le c_8  (1+t)\tau^{-1}  \vv^{-2m(2m+2)} a_{t,x}^{2m(2m+1)}  \big[\tau^{\ff 1 2}     +   \tau^{\gg_1} \vv^{-2m-2} a_{t,x}^{2m+1} \\
     &\qquad + \tau^{\gg_1+\ff{2-\aa_{m,d}}2} \vv^{-2m-2}a_{t,x}^{2m+1} \big],\end{align*}
   and
  \beg{align*} &\E\big[\|\Theta_0(y_i)-\Theta_{\tau,\sigma}(y_i)\|_\H\big] \le c_9  \tau \vv^{-(2m+2)^2}a_{t,x}^{(2m+1)(2m+2)}.\end{align*}
These together with \eqref{MX1} imply the desired estimate \eqref{B}.
\end{proof}

 \beg{proof}[Proof of Theorem \ref{T2} (3)] Let $\aa_{m,d}=1, \si\in (1,2)$ and   $\beta\in [\sigma,2).$
By \eqref{*2''},  \eqref{LN1-1cri}-\eqref{LN1-2cri},  instead of \eqref{MX2} we have
  \begin{align*}
   &\|y_i\|_{L^{6\gg_3}(\OO, \dot\H^\sigma)}+   \|u_{i,s}^N(y_i)\|_{L^{6\gg_3}(\OO, \dot\H^\sigma)}\le c_0   \vv^{-\frac {2m+3}2} a_{t,x}^{2m+1},\\
   &\|y_i\|_{L^{6\gg_3}(\OO, \dot\H^\beta)}+   \|u_{i,s}^N(y_i)\|_{L^{6\gg_3}(\OO, \dot\H^\beta)}\le c_0   \vv^{-\frac {(2m+1)(2m+3)}2-1} a_{t,x}^{(2m+1)^2}.
   \end{align*}
By using these estimates in place of \eqref{MX2}, we may modify  \eqref{MX4}, \eqref{MX*},   \eqref{MX5} and \eqref{MX6}    as  follows respectively:
 \beg{align*} & \E\big[\|\Theta_0(u_{i,s}^N(y_i))-\Theta_0(y_i)\|_\H\big]\\
 &\le c_1  \Big(1+ c_0^{2m}\vv^{-m(2m+1)(2m+3)-2m} a_{t,x}^{2m(2m+1)^2}  \Big) \big\|u_{i,s}^N(y_i)-y_i\big\|_{L^2(\OO;L^{2q}(\mu))},\end{align*}
    \beg{align*} &\sup_{s\in [0,\tau]}\|u_{i,s}^N(y_i)-y_i\|_{L^2(\OO; L^{2q}(\mu)}  \\
&\le  c_7\Big[\tau^{\ff 1 2}+ \tau^{\gg_1} \vv^{-\frac {(2m+1)(2m+3)}2-1} a_{t,x}^{(2m+1)^2}+  \tau^{\gg_1+\ff{2-\aa_{m,d}}2} \vv^{-1}(\vv^{-\frac {2m+3}2} a_{t,x}^{2m+1})^{2m+1} \Big],
\end{align*}
  \beg{align*} &\sum_{i=0}^{k(t)} \E\big[\|\Theta_0(u_{i,r}^N(y_i))-\Theta_0(y_i)\|_\H\big]  \\
  &\le c_8  (1+t)\tau^{-1}  \vv^{-m(2m+1)(2m+3)-2m} a_{t,x}^{2m(2m+1)^2}  \big[\tau^{\ff 1 2}     +   \tau^{\gg_1} \vv^{-\frac {(2m+1)(2m+3)}2-1} a_{t,x}^{(2m+1)^2} \\
     &\qquad + \tau^{\gg_1+\ff 12} \vv^{-\frac {(2m+3)(2m+1)}2-1}a_{t,x}^{(2m+1)^2} \big],\end{align*}
   and
  \beg{align*} &\E\big[\|\Theta_0(y_i)-\Theta_{\tau,\sigma}(y_i)\|_\H\big] \le c_9  \tau \vv^{-(2m+1)(2m+3)-1}a_{t,x}^{2(2m+1)^2}.\end{align*}
Then the  estimate   \eqref{B'} follows from \eqref{MX1}.
 \end{proof}

\section*{Appendix}\label{appendix} 

   In this appendix, we discuss how to improve the weak convergence order of the spectral Galerkin method such that its convergence error is still depending $\vv^{-1}$ polynomially. First, we give the second order estimate of the Markov semigroup $P_t^N$.
   Then we establish the sharper convergence rate of the spectral Galerkin method via the Kolmogorov equation with some additional assumptions on $f$.

\subsection{Improving the convergence order via Kolmogorov equation}

 For a function $\psi\in C^2(\H_N)$ and constants $\aa_1,\aa_2\ge 0,$ let
\beg{align*}\|\nn^2 \psi(x)\|_{\dot\H^{\aa_1, \aa_2}}:=\bigg(\sum_{i,j=1}^N \ll_i^{\aa_1} \ll_j^{\aa_2} \big|\nn^2_{e_i,e_j} \psi(x) \big|^2  \bigg)^{\ff 1 2},\ \ x\in\H_N.\end{align*}
Denote $C_b^2(\H_N)$ the space of functions in $ C^2(\H_N)$ with bounded first and second derivates.

 Notice that for $x, h_1,h_2\in \H_N$, it holds that
 \begin{align}\label{rep-kol}
\nabla^2P_t^N \psi(x) \cdot (h_1,h_2)&=\E^{x} \left[\nabla \psi(u^{N}_t)\cdot \zeta_N^{h_1,h_2}(t)\right]\\\nonumber
 &\quad+\E^{x}\left[\nabla^2\psi({u^N_t})\cdot (\eta_N^{h_1}(t),\eta_N^{h_2}(t))\right].
 \end{align}
 Here we use $\E^{x}$ to indicate the dependence of the expectation on $x$.
 In addition assume that
\begin{align}\label{F5}
|f''(s)|  \le  \kk_1(1+|s|^{\max(2m-1,0)}).
\end{align}
Then $\zeta_N^{h_1,h_2}$ satisfies the second order variational equation
 \begin{align}\label{var-2}
  \partial_t \zeta_N^{h_1,h_2}&=-A\zeta_N ^{h_1,h_2}-\vv^{-1}\pi_N f'(u^N_t) \cdot \zeta_N^{h_1,h_2}\\\nonumber
  &
  -\vv^{-1}\pi_N f''(u^N_t)\cdot \left(\eta_{N}^{h_1},\eta_{N}^{h_2}\right),\quad \; \zeta_N^{h_1,h_2}(0)=0,
 \end{align}
respectively.

\begin{lem}\label{second-finite}Assume {\bf (A)}, {\bf (F)} and {\bf (W)} with $  \aa_{m,d}:= \ff{md}{2m+1}\le 1$,  let $\eqref{Q}$ hold  for some  $\aa\in \big[\aa_{m,d},1\big]\cap\big(\ff{d-2}2,1\big]$,  and  let  $\gamma(\vv)<\infty$ in $\eqref{AN}$. Let \eqref{F5} hold, $N\in \bar \N$ and $d\in (0,4).$
Then
 $\|\nn^2 P_t^N \psi(x)\|_{\dot\H^{0,0}}$ is finite if $ \psi\in \mathcal C_b^2(\H_N)$ and $x\in \dot \H^{\beta} \cap \H_N$ with $\beta\in (\frac d2,1+\alpha].$
\end{lem}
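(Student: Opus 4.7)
The plan is to start from the second-order variational representation \eqref{rep-kol} and control
\[
\|\nn^2 P_t^N\psi(x)\|_{\dot\H^{0,0}}^2=\sum_{i,j=1}^N\bigl|\nn^2_{e_i,e_j}P_t^N\psi(x)\bigr|^2\le 2(S_1+S_2),
\]
where $S_1:=\sum_{i,j}\bigl|\E\bigl[\nn\psi(u_t^N)\cdot\zeta_N^{e_i,e_j}(t)\bigr]\bigr|^2$ and $S_2:=\sum_{i,j}\bigl|\E\bigl[\nn^2\psi(u_t^N)(\eta_N^{e_i}(t),\eta_N^{e_j}(t))\bigr]\bigr|^2$. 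For $N$ finite the claim is immediate, so I focus on $N=\infty$. The quantitative input is that the Nash inequality \eqref{NS} gives $\ll_i\gtrsim i^{2/d}$, so the hypothesis $d\in(0,4)$ yields $\sum_i\ll_i^{-2}<\infty$.

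For $S_2$, the operator-norm boundedness of $\nn^2\psi$ and Cauchy--Schwarz give
\[
S_2\le\|\nn^2\psi\|_\infty^2\Bigl(\sum_i\E\|\eta_N^{e_i}(t)\|_\H^2\Bigr)^2.
\]
The mild form \eqref{03} writes $\eta_N^{e_i}(t)=\e^{-\ll_i t}e_i+R_i(t)$ with $R_i(t)=-\vv^{-1}\int_0^t I_{s,t}\pi_N f'(u_s^N)\e^{-\ll_i s}e_i\,\d s$. Combining the a priori bound \eqref{04}, the growth $|f'(u)|\le\kk_1(1+|u|^{2m})$ from \eqref{F4}, $\|e_i\|_\H=1$, Fubini's theorem, and the $L^\infty(\mu)$-moment bound $\sup_s\E\|u_s^N\|_{L^\infty(\mu)}^{4m}<\infty$ from Lemma \ref{L2'} (applicable since $x\in\dot\H^\beta$ with $\beta\in(d/2,1+\aa]$), I would expand $\E\|R_i(t)\|_\H^2$ as a double time integral and estimate it by $\bigl(\int_0^t\e^{\kk_1(t-s)/\vv}\e^{-\ll_i s}\,\d s\bigr)^2\lesssim\ll_i^{-2}$, giving $\E\|R_i(t)\|_\H^2\le C(x,\vv,t)\ll_i^{-2}$. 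Together with $\sum_i\e^{-2\ll_i t}<\infty$ for $t>0$, this yields $\sum_i\E\|\eta_N^{e_i}(t)\|_\H^2<\infty$, so $S_2<\infty$.

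For $S_1$ I would use $|\E[\nn\psi(u_t^N)\cdot\zeta_N^{e_i,e_j}(t)]|^2\le\|\nn\psi\|_\infty^2\E\|\zeta_N^{e_i,e_j}(t)\|_\H^2$ together with the mild form
\[
\zeta_N^{e_i,e_j}(t)=-\vv^{-1}\int_0^t I_{s,t}\pi_N\bigl[f''(u_s^N)\eta_N^{e_i}(s)\eta_N^{e_j}(s)\bigr]\d s
\]
obtained from \eqref{var-2}, the estimate \eqref{04}, the growth \eqref{F5}, and the pointwise H\"older bound $\|\eta_N^{e_i}\eta_N^{e_j}\|_{L^2(\mu)}\le\|\eta_N^{e_i}\|_{L^4(\mu)}\|\eta_N^{e_j}\|_{L^4(\mu)}$. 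Since $d<4$, the Sobolev embedding \eqref{S} gives $\|v\|_{L^4(\mu)}\lesssim\|v\|_{\dot\H^{\gg_0}}$ for some $\gg_0\in(d/4,1)$, so matters reduce to $\sum_i\E\|\eta_N^{e_i}(s)\|_{\dot\H^{\gg_0}}^2<\infty$ uniformly in $s\in(0,t]$. Using again the decomposition $\eta_N^{e_i}=\e^{-\ll_i s}e_i+R_i$, the first piece contributes $\sum_i\ll_i^{\gg_0}\e^{-2\ll_i s}<\infty$ for $s>0$, while $\E\|R_i(s)\|_{\dot\H^{\gg_0}}^2$ would be bounded via the alternative Duhamel identity $R_i(s)=-\vv^{-1}\int_0^s\e^{-A(s-r)}\pi_N f'(u_r^N)\eta_N^{e_i}(r)\d r$, the smoothing estimate \eqref{BN0} $\|\e^{-A(s-r)}\|_{\H\to\dot\H^{\gg_0}}\lesssim(s-r)^{-\gg_0/2}\e^{-\ll_1(s-r)/2}$, the prior bound $\|\eta_N^{e_i}(r)\|_\H\lesssim\e^{-\ll_i r}+\ll_i^{-1}$, and a splitting of the time integral into $[0,s/2]\cup[s/2,s]$ to produce $\ll_i^{-2}$-decay. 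Combining then yields $\sum_{i,j}\E\|\zeta_N^{e_i,e_j}(t)\|_\H^2<\infty$ and hence $S_1<\infty$.

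The main obstacle is extracting joint $(i,j)$-decay for $\|\zeta_N^{e_i,e_j}\|_\H$, for which the crude pointwise operator bound \eqref{04} alone is insufficient --- one must exploit the $\e^{-\ll_i s}e_i$ factor carried by the explicit mild form \eqref{03}. The hypothesis $d<4$ enters at precisely two points: it ensures the Sobolev embedding $\dot\H^{\gg_0}\hookrightarrow L^4(\mu)$ for some $\gg_0<1$, and the summability $\sum_i\ll_i^{-2}<\infty$.
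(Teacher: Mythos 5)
Your route is genuinely different from the paper's, and more ambitious. The paper's proof only establishes the operator-norm bound $|\nn^2P_t^N\psi(x)\cdot(h_1,h_2)|\le C(t,x,\vv)\|h_1\|_\H\|h_2\|_\H$ and then reads off the $\dot\H^{0,0}$-finiteness — a step that is immediate only for finite $N$, which is in fact how the lemma is used in Proposition \ref{prp-high-order} (there $N\ge N_\vv$). The paper's key device is not \eqref{04} but a smoothing estimate for $I_{s,t}$ from $\dot\H^{-\beta}$ to $\H$ with $\beta\in(\ff d2,2)$ (the conditional analogue of \eqref{GR}), combined with the duality $\|f''(u)\eta^{h_1}\eta^{h_2}\|_{\dot\H^{-\beta}}\lesssim\|f''(u)\|_{L^\infty(\mu)}\|\eta^{h_1}\|_\H\|\eta^{h_2}\|_\H$ coming from $\dot\H^{\beta}\hookrightarrow L^\infty(\mu)$; this costs an integrable singularity $(t-s)^{-\beta/2}$ (whence $d<4$) and needs only $\H$-bounds on $\eta$, never $L^4$-bounds. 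Your treatment of $S_2$, including the refinement $\E\|R_i(t)\|_\H^2\lesssim\ll_i^{-2}$ and the use of $\sum_i\ll_i^{-2}<\infty$ for $d<4$, is correct and goes beyond what the paper proves.

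The gap is in $S_1$. Using only the non-smoothing bound \eqref{04} together with the chain $\|f''(u)\eta^{e_i}\eta^{e_j}\|_{L^2}\le\|f''(u)\|_{L^\infty}\|\eta^{e_i}\|_{L^4}\|\eta^{e_j}\|_{L^4}$, the leading (free$\times$free) contribution to $\E\|\zeta_N^{e_i,e_j}(t)\|_\H$ is of size $\|e_i\|_{L^4}\|e_j\|_{L^4}\int_0^t\e^{-(\ll_i+\ll_j)s}\d s\lesssim \ll_i^{d/8}\ll_j^{d/8}(\ll_i+\ll_j)^{-1}$ (the general ultracontractivity bound gives $\|e_i\|_{L^4(\mu)}\lesssim\ll_i^{d/8}$), and $\sum_{i,j}\ll_i^{d/4}\ll_j^{d/4}(\ll_i+\ll_j)^{-2}$ converges only for $d<4/3$; even for uniformly bounded eigenfunctions one needs $d<2$. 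Relatedly, the reduction to ``$\sum_i\E\|\eta_N^{e_i}(s)\|_{\dot\H^{\gg_0}}^2<\infty$ uniformly in $s\in(0,t]$'' cannot hold: the free part alone contributes $\sum_i\ll_i^{\gg_0}\e^{-2\ll_is}\sim s^{-\gg_0-d/2}\to\infty$ as $s\downarrow0$, and reinserting this blow-up into the time integral still leaves a quantity that is not square-summable over $(i,j)$ once $d\ge 4/3$. So for the full range $d\in(0,4)$ your argument for $S_1$ does not close; to cover it you would need the paper's smoothing estimate for $I_{s,t}$ acting on negative Sobolev spaces (or restrict to finite $N$, where the whole issue disappears).
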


Lemma \ref{second-finite} only shows the finiteness of the second order derivative of $P_t^N$. From its proof in next subsection, one can see that the upper bound of $\|\nn^2 P_t^N \psi(x)\|_{\dot\H^{0,0}}$ may depend on $\vv^{-1}$ exponentially.

Next, we impose another additional condition \eqref{add-condition1} such that convergence order in space of the spectral Galerkin method can be improved.  A typical example of $f$ and $A$ satisfying this  condition is Example \ref{ex-f} with $m\in \mathbb N$ (one can use the algebra property of $\dot \H^{\beta}, \beta>\frac d2,$ to verify this). 

\begin{prp}\label{prp-high-order} Under the condition of Lemma \ref{second-finite}. Let $N\ge N_{\vv}$ where $N_{\vv}$ is  defined in \eqref{AN}.
In addition assume that
\begin{align}\label{add-condition1}
\|f(x)\|_{\dot \H^{\beta}}\le c(1+\|x\|_{\dot \H^{\beta}}^{2m+1}), \; \text{for any} \; \beta \in \big(\frac d2,2\big) \; \text{and}\; x\in \dot \H^{\beta}.
\end{align}
For any $\alpha_1\in (0,2)$ and $\beta\in (\frac d2,1+\alpha]$, there exists $c>0$ such that for any $t>0$,  $x\in \dot \H^{\beta}$  and $\psi\in C_b^2( \H)$, 
\begin{align*}
&|\E [{\psi}(u_t(x))]-\E[{\psi}(u_t^N(\pi_N x))]|\\
&\le c (1+t+t^{-\frac {\alpha_1} 2})\Big(\e^{-\ff 1 \vv t}\|\nn\psi\|_\infty + \ff{\gg(\vv) }{\ss{\vv  }}\|\psi\|_\infty\Big)  \Big(\lambda_N^{-\frac {\alpha_1} 2-\frac {\beta} 2}+\sqrt{\delta_N} +\|(1-\pi_N)x\|_{{\dot \H}^{-\alpha_1}}\Big)\\
&\quad \times  \vv^{-  {(md+2)}(2m+\frac 12)-2-m(2m+1)}  \Big(1+\|x\|_{\dot \H^{\beta}}^{4m+1}+\|x\|_{\dot \H^1}^{(2m+1)(4m+1)}\Big).
\end{align*}
\end{prp}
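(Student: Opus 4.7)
The plan is to combine a Kolmogorov-equation argument with the gradient estimate of Lemma \ref{L4} (applied with $N=\infty$) and the second order estimate of Lemma \ref{second-finite}. First I would split the error as
\begin{equation*}
\E\psi(u_t(x))-\E\psi(u_t^N(\pi_N x))=\bigl[P_t\psi(x)-P_t\psi(\pi_N x)\bigr]+\bigl[P_t\psi(\pi_N x)-P_t^N\psi(\pi_N x)\bigr].
\end{equation*}
For the first bracket, a mean-value argument combined with the duality between $\dot\H^{\alpha_1}$ and $\dot\H^{-\alpha_1}$ gives $|P_t\psi(x)-P_t\psi(\pi_N x)|\le\sup_y\|\nabla P_t\psi(y)\|_{\dot\H^{\alpha_1}}\,\|(1-\pi_N)x\|_{\dot\H^{-\alpha_1}}$, and I would invoke Lemma \ref{L4}(2)-(3) with $N=\infty$ to bound the supremum. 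This already supplies both the prefactor $\e^{-t/\vv}\|\nabla\psi\|_\infty+\gamma(\vv)\vv^{-1/2}\|\psi\|_\infty$ and a polynomial-in-$\vv^{-1}$ growth for the $\|(1-\pi_N)x\|_{\dot\H^{-\alpha_1}}$ contribution to the final bound.

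For the second bracket, since both dynamics start at $\pi_N x\in\H_N$, my plan is to apply It\^o's formula to $\Psi(s,y):=P_{t-s}\psi(y)$ evaluated along the Galerkin trajectory $u_s^N(\pi_N x)$. Using the Kolmogorov equation $\partial_s\Psi=-\mathcal{L}\Psi$ for the full generator $\mathcal{L}$, the unbounded $A$-terms cancel, and what remains is exactly the mismatch between $f$ and $\pi_N f$ in the drift together with the mismatch between $Q$ and $\pi_N Q\pi_N$ in the covariance:
\begin{align*}
P_t\psi(\pi_N x)-P_t^N\psi(\pi_N x)
&=-\vv^{-1}\E\int_0^t\bigl\<(1-\pi_N)f(u_s^N),\nabla P_{t-s}\psi(u_s^N)\bigr\>_\H\,ds\\
&\quad+\tfrac12\,\E\int_0^t\sum_{i>N}q_i\,\partial_{e_i}^2 P_{t-s}\psi(u_s^N)\,ds.
\end{align*}

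For the drift integral I would use duality together with the spectral bound $\|(1-\pi_N)f(u_s^N)\|_{\dot\H^{-\alpha_1}}\le\lambda_{N+1}^{-(\alpha_1+\beta)/2}\|f(u_s^N)\|_{\dot\H^\beta}$; the added assumption \eqref{add-condition1} then converts this into $c(1+\|u_s^N\|_{\dot\H^\beta}^{2m+1})$, whose $L^p(\Omega)$ moments are controlled by Lemma \ref{L2}(3). The complementary factor $\|\nabla P_{t-s}\psi(u_s^N)\|_{\dot\H^{\alpha_1}}$ is handled by Lemma \ref{L4}(2)-(3), which supplies both the $(1+t^{-\alpha_1/2})$ time singularity and the claimed $\psi$-prefactor. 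This step accounts for the $\lambda_N^{-(\alpha_1+\beta)/2}$ contribution of the final bound.

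The hard part will be the trace integral, where I need to extract a rate of $\sqrt{\delta_N}$ from $\sum_{i>N}q_i\partial_{e_i}^2P_{t-s}\psi$ while keeping the dependence on $\vv^{-1}$ polynomial. My plan is to use the Cauchy--Schwarz split
\begin{equation*}
\Bigl|\sum_{i>N}q_i\,\partial_{e_i}^2 P_{t-s}\psi\Bigr|\le\Bigl(\sum_{i>N}q_i\lambda_i^{-1}\Bigr)^{1/2}\Bigl(\sum_{i>N}q_i\lambda_i|\partial_{e_i}^2 P_{t-s}\psi|^2\Bigr)^{1/2}=\sqrt{\delta_N}\,R_{t-s},
\end{equation*}
and then to estimate the weighted second-derivative quantity $R_{t-s}$ using the representation \eqref{rep-kol}--\eqref{var-2}, the gradient bound \eqref{GR3}, and the moment estimates of Section \ref{sec-3}. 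The crucial technical point, already the content of Lemma \ref{second-finite}, is that these ingredients give a polynomial (rather than exponential) bound on $R_{t-s}$ in $\vv^{-1}$. The large exponent $(md+2)(2m+\tfrac12)+2+m(2m+1)$ stated in the proposition will arise from stacking the polynomial factors coming from the moment bounds, Lemma \ref{L4}, and $R_{t-s}$, while the polynomial in $\|x\|_{\dot\H^{\beta\vee 1}}$ reflects the combination of the $(2m+1)$-th power from \eqref{add-condition1} with the $(2m+1)$-type powers appearing in the $\dot\H^\beta$-moments of $u_s^N$ derived in Section \ref{sec-3}.
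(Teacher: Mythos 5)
Your decomposition differs from the paper's in a way that creates a genuine gap at the trace term. The paper does not apply the Kolmogorov equation directly to compare $u_t$ with $u_t^N$; it first introduces an intermediate process $U^{(N)}$ driven by the truncated noise $\pi_N\,\d W$ but with the \emph{full} drift $-Au-\vv^{-1}f(u)$. The comparison $u_t$ versus $U^{(N)}_t$ is then a strong $L^2$ error handled by It\^o's formula and Gronwall, with the resulting factor $\e^{\kappa_1 t/\vv}$ removed by the same time-splitting/log-Harnack argument as in Theorem \ref{T1}(1); this is what produces the $\sqrt{\delta_N}$ term. Only the comparison $U^{(N)}(t,\pi_Nx)$ versus $u_t^N(\pi_Nx)$ goes through the Kolmogorov equation, and there the two processes share exactly the same covariance $\pi_NQ\pi_N$, so the second-order term in the It\^o expansion cancels identically and only the drift mismatch $\vv^{-1}\<\nabla X^{(N)},\pi_Nf-f\>$ survives, which is controlled by the first-order gradient estimate \eqref{GR3} together with \eqref{add-condition1}.

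Your plan instead keeps the covariance mismatch $\frac12\sum_{i>N}q_i\partial_{e_i}^2P_{t-s}\psi$ and proposes to bound it by $\sqrt{\delta_N}$ times a weighted second-derivative quantity. The Cauchy--Schwarz split is algebraically fine, but the claim that Lemma \ref{second-finite} supplies a polynomial-in-$\vv^{-1}$ bound on that quantity is exactly what fails: the paper states explicitly (and its proof of Lemma \ref{second-finite} shows, via the factors $\e^{3\kappa_1 t/\vv}$) that the available bound on $\|\nabla^2P_t^N\psi\|_{\dot\H^{0,0}}$ is only finite and may grow \emph{exponentially} in $\vv^{-1}$. The asymptotic log-Harnack inequality removes the exponential only for the first derivative via \eqref{GG} and the composition $P_t^N=P_\vv^N(P_{t-\vv}^N\cdot)$; no analogous mechanism is established for second derivatives, let alone for the stronger weighted norm $\bigl(\sum_{i>N}q_i\lambda_i|\partial^2_{e_i}P_{t-s}\psi|^2\bigr)^{1/2}$ you need. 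Since the entire point of the proposition is polynomial dependence on $\vv^{-1}$, this step is a missing idea rather than a technicality; the remedy is the paper's intermediate truncated-noise process, which converts the covariance mismatch into a strong error amenable to the log-Harnack machinery.
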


\begin{proof}
First, we define the following SDE with a truncated noise of parameter $M\in \mathbb N$,
\begin{align*}
\d U^{(M)}=-A U^{(M)} \d t-\vv^{-1}f(U^{(M)}) \d t+\pi_M \d W(t), U^{(M)}(0)=x.
\end{align*}
By the triangle inequality, for any ${\psi}\in C_b^2(\H),$ it holds that
\begin{align*}
&|\E [{\psi}(u_t(x))]-\E[{\psi}(u_t^N(\pi_N x))]|\\
&\le |\E [{\psi}(u_t(x))]-\E[{\psi}(U^{(M)}(t,x))]|+|\E [{\psi}(U^{(M)}(t,x))]-\E[{\psi}(u_t^N(\pi_N x))]|.
\end{align*}
Let  $M\ge N_{\vv}$ in \eqref{AN}. It suffices to estimate the above two terms.

\textbf{(a) Upper bound of $|\E [{\psi}(u_t(x))]-\E[{\psi}(U^{(M)}(t,x))]|$.}

By It\^o's formula, we obtain that
\begin{align*}
\E [\|U^{(M)}(t)-u_t\|^2_{\H}]\le 2 \kappa_1 \vv^{-1} \int_0^t \E [\|U^{(M)}(s)-u_s\|^2_{\H}] \d s+\delta_M t.
\end{align*}
The Gronwall's argument leads to
\begin{align*}
\E [\|U^{(M)}(t)-u_t\|^2_{\H}]\le c_1  e^{ \kappa_1 \vv^{-1} t} t \delta_M.
\end{align*}
As in the proof of (1) in Theorem \ref{T1}, it is not hard to see that for any $x\in \H,$
\begin{align}\label{trun-err}
|\E [{\psi}(u_t(x))]-\E[{\psi}(U^{(M)}(t, x))]|\le c_2 (1+t)\ff{\gg(\vv)}{\vv}  \sqrt{\delta_M}.
\end{align}

\textbf{(b) Upper bound of $|\E [{\psi}(U^{(M)}(t, x))]-\E[{\psi}(u_t^N(\pi_N x))]|$.}

We decompose $|\E [{\psi}(U^{(M)}(t, x))]-\E[{\psi}(u_t^N(\pi_N x))]|$  via the Kolmogorov equation instead of the telescope summation used in the proof of Theorem \ref{T1}.
Let $M=N \ge N_{\vv}.$
 Denote  $X^{(N)}(t,y)=P_{t}^{(N)}\psi(y)=\E \psi(U^{(N)}(t,y))$ for $\psi\in  C_b^2(\H)$ and $t\ge 0.$ Then it can be verified (see, e.g., \cite{BG18B,CH18}) that $X^{(N)}$ satisfies
\begin{align} \label{kol-eq}
\partial_t X^{(N)} (t,y)&=\nn X^{(N)} (t,y)\cdot \left[-Ay-\vv^{-1} f(y)\right]\\\nonumber
& +\frac 12 \sum_{j=1}^N  \nn^2 X^{(N)}(t,y)\cdot \left(\pi_N Q^{\frac 12}e_j,\pi_N Q^{\frac 12}e_j\right).
\end{align}
Now, we use the following decomposition,
\begin{align}\label{weak-dec}
& \E\big[\psi(U^{(N)}(t,x))-\psi(u_t^N(\pi_N x))\big]  \\\nonumber
&=\E \big[\psi(U^{(N)}(t,x))-\psi(U^{(N)}(t,\pi_Nx))\big]+\E[\psi(U^{(N)}(t,\pi_N x))-\psi(u_t^N(\pi_N x))] \\\nonumber
&=Er_1+Er_2.
\end{align}
For the term $Er_1,$ by using \eqref{I2} and \eqref{GR3},  we have that for any $ \alpha_1\in (0,2)$ and $\beta\in (\frac d2, 1+\alpha],$ there exists a constant $c_1>0$ such that
\begin{align*}
|Er_1|&\le c_1 \Big(\e^{-\ff 1 \vv t}\|\nn\psi\|_\infty + \ff{\gg(\vv) (1 +\kk_1)}{2\ss{\vv  }}\|\psi\|_\infty\Big) \Big[t^{-\ff {\aa_1} 2}
+  \vv^{-1} \big(1+\|x\|_{\dot\H^{{\beta}{}}}^{2m}\big)\\\nonumber
&+\vv^{- ({md+2})m-1}\Big(1+\|x\|_{\dot\H^1}^{(2m+1)2m}\Big) \Big] \|(1-\pi_N)x\|_{\dot \H^{-\alpha_1}}.
\end{align*}
For the term $Er_2$, applying the It\^o's formula and \eqref{kol-eq},  thanks to Lemma \ref{second-finite}, we have that
\begin{align*}
&|Er_2|=\Big|\int_{0}^{t} \frac {\text {d}}{\text{d} s} \E [X^{(N)}(t-s, u_s^N(\pi_N x))] \text{d} s\Big|\\
&=\Big|\int_0^t \vv^{-1} \E\Big[\<\nabla X^{(N)}(t-s, u^N_s(\pi_N x)), f(u^N_s( \pi_N x))- \pi_N f(u^N_s( \pi_N x))\>_\H\Big] \text{d} s\\
	&+\frac 12 \int_0^t \sum_{j=1}^{N}\E\Big[ \nabla^2X^{(N)}(t-s, u^N_s(\pi_N x))\cdot \{-(Q^{\frac 12}e_j, Q^{\frac 12}e_j) +(\pi_N Q^{\frac 12}e_j, \pi_NQ^{\frac 12}e_j)\}\Big] \text{d} s\Big|\\
	&=\vv^{-1}\Big|\int_0^t  \E\Big[\<\nabla X^{(N)}(t-s, u^N_s(\pi_N x)), \pi_N f(u^N_s( \pi_N x))-f(u^N_s( \pi_N x))\>_\H \Big] \text{d} s.
\end{align*}
By \eqref{GR3}, we obtain that for any $\alpha_1\in (0,2)$ and $\beta\in (\frac d2,1+\alpha],$ there exists  a constant $c_2>0$ such that
\begin{align*}
|Er_2|
&\le c_2 \Big(\e^{-\ff 1 \vv t}\|\nn\psi\|_\infty + \ff{\gg(\vv) (1 +\kk_1)}{2\ss{\vv  }}\|\psi\|_\infty\Big) \vv^{-1}\E \int_0^t  \Big[s^{-\ff {\aa_1} 2}
+  \vv^{-1} \big(1+\|u_s^N(x)\|_{\dot\H^{{\beta}{}}}^{2m}\big)\\\nonumber
&+\vv^{- ({md+2})m-1}\Big(1+\|u_s^N(x)\|_{\dot\H^1}^{(2m+1)2m}\Big) \Big]    \|(1-\pi_N)f(u^N_s( x))\|_{\dot \H^{-\alpha_1}} \text{d} s.
\end{align*}

Using \eqref{*2'}, \eqref{*2''} and \eqref{add-condition1}, there exist constants $c_3>0,c_4>0$ such that \begin{equation}\begin{split}
|Er_2|&\le c_3(1+t)\Big(\e^{-\ff 1 \vv t}\|\nn\psi\|_\infty + \ff{\gg(\vv) (1 +\kk_1)}{2\ss{\vv  }}\|\psi\|_\infty\Big) \lambda_N^{-\frac {\alpha_1} 2-\frac {\beta} 2} \\\nonumber
&\quad \times \Big(\vv^{-2}+\vv^{-2}\|x\|_{\dot \H^{\beta}}^{2m}+\vv^{- {(md+2)}m-2-m(2m+1)}\|x\|_{\dot \H^1}^{2m(2m+1)}\Big) \\\nonumber
&\quad\times \sup_{s\in [0,t]}\sqrt{\E[\|f(u_s^N(x))\|_{\dot \H^{\beta}}^2]} \\
&\le c_4(1+t)\Big(\e^{-\ff 1 \vv t}\|\nn\psi\|_\infty + \ff{\gg(\vv) (1 +\kk_1)}{2\ss{\vv  }}\|\psi\|_\infty\Big)  \lambda_N^{-\frac {\alpha_1} 2-\frac {\beta} 2} \\
&\quad \times
\Big(\vv^{-2}+\vv^{-2}\|x\|_{\dot \H^{\beta}}^{2m}+\vv^{- {(md+2)}m-2-m(2m+1)}\|x\|_{\dot \H^1}^{2m(2m+1)}\Big) \\\nonumber
&\quad \times \Big(1+\|x\|_{\dot \H^{\beta}}^{2m+1}+\vv^{- \ff {(md+2)}2(2m+1)}\|x\|_{\dot \H^1}^{(2m+1)^2}\Big).
\end{split}
\end{equation}
Combining \eqref{trun-err} and the estimates of $Er_1$-$Er_2$, we complete the proof.
\end{proof}

\subsection{Proof of Lemma \ref{second-finite}}

\begin{proof}[Proof of Lemma \ref{second-finite}]
By introducing $I_{s,t}$ in \eqref{02}, the solution of \eqref{var-2} satisfies
\begin{align}\label{mild-second}
 \zeta_N^{h_1,h_2}(t)= -\vv^{-1} \int_0^t I_{s,t} \pi_N f''(u^N_s)\cdot \left(\eta_{N}^{h_1}(s),\eta_{N}^{h_2}(s)\right)ds.
\end{align}
Similar to the proof of \eqref{GR}, one can prove that for any $\alpha_1\in (0,2),$ there exists a constant $c_1>0$ such that
\begin{align*}
&\|I_{s,t}(h)\|_\H=\|h_{s,t}\|_\H\\
&\le \|h_{s,t}-e^{-A(t-s)}h\|_{\H}+ \|e^{-A(t-s)}h\|_{\H}\\
&\le c_1\|h\|_{\dot\H^{-\aa_1}}\bigg[(t-s)^{-\ff{\aa_1} 2}
+ \ff{\e^{\ff{\kk_1}\vv (t-s)}}\vv \int_s^t \e^{-\ff{\ll_1}2 (r-s)}\Big(1+\|u_r^N(x)\|_{L^{\infty} (\mu)}^{2m}\Big)\d r\bigg].
\end{align*}
Combining  this with the property of Gamma function and \eqref{*2''}, we have that
for any $\beta\in (\frac d2,1+\alpha]$ and $\alpha_1\in (0,2),$ there exists a constant $c_2>0$ such that
\begin{align}\label{ist2}
&\E[ \|I_{s,t}(h)\|_\H| \F_s]\le c_2\|h\|_{\dot\H^{-\aa_1}}\bigg[(t-s)^{-\ff{\aa_1} 2}
\\\nonumber
&+ {\e^{\ff{\kk_1}\vv (t-s)}} \vv^{-1} \Big(1+\|u_s^N(x)\|_{\dot \H^{\beta}}^{2m}+ \vv^{ - m(md+2)}(1+\|u_s^N(x)\|_{\H^1}^{(2m+1)2m})\Big)\bigg].
\end{align}
By the duality and \eqref{Sob1}, it holds that for some $c>0,$
\begin{align*}
\|u\|_{\dot \H^{-\beta}}\le c\|u\|_{L^{1}(\mu)}, \;\forall \; u\; \in L^1(\mu).
\end{align*}

Combining this with \eqref{ist2}, \eqref{mild-second} and \eqref{F5}, we obtain that for any $\beta\in (\frac d2,1+\alpha],$ there exists constants $c_3>0,c_4>0$ such that
\begin{align*}
&\E \|\zeta_N^{h_1,h_2}(t)\|_{\H}\\&= \vv^{-1} \E \Big\| \int_0^t I_{s,t} \pi^N f''(u^N_s)\cdot \left(\eta_{N}^{h_1}(s),\eta_{N}^{h_2}(s)\right)ds\Big\|_{\H}\\
&\le  \vv^{-1} \E \int_0^t \E \Big[\Big\| I_{s,t} \pi^N f''(u^N_s)\cdot \left(\eta_{N}^{h_1}(s),\eta_{N}^{h_2}(s)\right)\Big\|_{\H} \; \Big| \; \mathcal F_s\Big]ds\\
&\le c_3\vv^{-1} \E \int_0^t \|f''(u^N_s)\cdot \left(\eta_{N}^{h_1}(s),\eta_{N}^{h_2}(s)\right)\Big\|_{\dot \H^{-\beta}} \bigg[(t-s)^{-\ff {\beta} 2}\\
&
+ {\e^{\ff{\kk_1}\vv (t-s)}} \vv^{-1} \Big(1+\|u_s^N(x)\|_{\dot \H^\beta}^{2m}+ \vv^{ - m(md+2)}(1+\|u_s^N(x)\|_{\dot \H^1}^{2m(2m+1)})\Big)\bigg] ds\\
&\le c_4 \vv^{-1} \E \int_0^t (1+\|u_{s}^N(x)\|_{L^{\infty}(\mu)}^{\max(2m-1,0)})\|\eta_{N}^{h_1}(s)\|_{\H}\|\eta_{N}^{h_2}(s)\|_{\H}\bigg[(t-s)^{-\ff {\beta} 2}\\
&
+ {\e^{\ff{\kk_1}\vv (t-s)}} \vv^{-1} \Big(1+\|u_s^N(x)\|_{\dot \H^\beta}^{2m}+ \vv^{ - m(md+2)}(1+\|u_s^N(x)\|_{\dot \H^1}^{2m(2m+1)})\Big)\bigg] ds.
\end{align*}
Applying \eqref{04}, the H\"older's inequality, and using the moment estimates \eqref{*2'}-\eqref{*2''} and \eqref{I1}-\eqref{I2}, one further obtain that for some $c_5>0,c_6>0$,
\begin{equation}\begin{split}\label{second-d1}
&\E \|\zeta_N^{h_1,h_2}(t)\|_{\H}\le c_5 e^{3\frac {\kappa_1}\vv t}\vv^{-1} \E \int_0^t (1+\|u_{s}^N(x)\|_{L^{\infty}(\mu)}^{\max(2m-1,0)})\|h_1\|_{\H}\|h_2\|_{\H} \\
&\quad\quad\times \bigg[(t-s)^{-\ff {\beta} 2}
+ \vv^{-1} \Big(1+\|u_s^N(x)\|_{\dot \H^\beta}^{2m}+\vv^{-m(md+2)}\|u_s^N(x)\|_{\dot \H^1}^{2m(2m+1)}\Big)\bigg] ds \\
&\quad\quad \le c_6(1+t) e^{3\frac {\kappa_1}\vv t}\vv^{-2}\|h_1\|_{\H}\|h_2\|_{\H}\\
&\quad\quad \times \Big(1+\|x\|_{\dot\H^{\beta}}+ \vv^{-\ff{md+2} 2}(1+\|x\|_{\dot \H^1})^{2m+1}\Big)^{2m+\max(2m-1,0)}.
\end{split}\end{equation}

Now, we are in a position to show the finiteness of $\nabla^2 P_t^N\psi.$
According to \eqref{rep-kol}, it follows that
\begin{align*}
&\Big|\nabla^2 P_t^N\psi(x) \cdot (h_1,h_2)\Big|\\
&\le \Big|\E^{x} \left[\nabla \psi(u^{N}_t)\cdot \zeta_N^{h_1,h_2}(t)\right]\Big|+\Big|\E^{x} \left[\nabla^2\psi({u^N_t})\cdot (\eta_N^{h_1}(t),\eta_N^{h_2}(t))\right]\Big|\\
&\le \|\nabla \psi\|_{\infty} \E^x \|\zeta_N^{h_1,h_2}(t)\|_\H+ \|\nabla^2 \psi\|_{\infty}\E^x \|\eta_N^{h_1}(t)\|_\H \|\eta_N^{h_2}(t)\|_\H.
\end{align*}
Using \eqref{04} and the estimates of $\E^x \|\zeta_N^{h_1,h_2}(t)\|_\H$ \eqref{second-d1}, we obtain that
\begin{align*}
&\Big|\nabla^2 P_t^N\psi(x) \cdot (h_1,h_2)\Big|\\
&\le  \|\nabla^2 \psi\|_{\infty} e^{2\frac {\kappa_1}{\vv}t}\|h_1\|_\H\|h_2\|_\H \\
&+ c_6 \|\nabla \psi\|_{\infty} (1+t) e^{3\frac {\kappa_1}\vv t}\|h_1\|_{\H}\|h_2\|_{\H}\\
\nonumber
&\quad \times \vv^{-2}\Big(1+\|x\|_{\dot\H^{\beta}}+ \vv^{-\ff{md+2} 2}(1+\|x\|_{\dot \H^1})^{2m+1}\Big)^{2m+\max(2m-1,0)}.
\end{align*}
This implies the finiteness of $\nabla^2 P_t^N\psi$ under the $\dot\H^{0, 0}$-norm.

\end{proof}

\end{document}